\documentclass[11pt]{amsart}

\usepackage[margin=1.05in]{geometry}
\usepackage{amsmath,amssymb,amsthm,mathtools,mathrsfs}
\usepackage{microtype}
\usepackage{enumitem}
\usepackage{booktabs}
\usepackage{xcolor}
\usepackage[colorlinks=true,linkcolor=blue!55!black,citecolor=blue!55!black,urlcolor=blue!55!black]{hyperref}
\usepackage{aliascnt}
\usepackage[nameinlink,capitalise]{cleveref}

\numberwithin{equation}{section}

\newtheorem{theorem}{Theorem}[section]
\newaliascnt{proposition}{theorem}
\newtheorem{proposition}[proposition]{Proposition}
\aliascntresetthe{proposition}
\newaliascnt{lemma}{theorem}
\newtheorem{lemma}[lemma]{Lemma}
\aliascntresetthe{lemma}
\newaliascnt{corollary}{theorem}
\newtheorem{corollary}[corollary]{Corollary}
\aliascntresetthe{corollary}
\newaliascnt{remark}{theorem}

\aliascntresetthe{remark}
\theoremstyle{definition}
\newaliascnt{definition}{theorem}

\aliascntresetthe{definition}

\newcommand{\B}{\mathbb B}
\newcommand{\PP}{\mathbb P}
\newcommand{\CC}{\mathbb C}
\newcommand{\OO}{\mathcal O}
\newcommand{\II}{\mathcal I}
\newcommand{\AG}{\mathcal A_G}
\newcommand{\JJ}{\mathcal J}
\newcommand{\TT}{\mathcal T}
\newcommand{\TTzero}{\mathcal T_0}
\newcommand{\QQ}{\mathcal Q}
\newcommand{\EE}{\mathscr E}
\newcommand{\LL}{\mathscr L}
\newcommand{\tors}{\operatorname{tors}}
\newcommand{\ord}{\operatorname{ord}}
\newcommand{\Span}{\operatorname{span}}
\newcommand{\rank}{\operatorname{rank}}
\newcommand{\coker}{\operatorname{coker}}

\newcommand{\ch}{\operatorname{ch}}

\newcommand{\Hom}{\operatorname{Hom}}
\newcommand{\Sym}{\operatorname{Sym}}
\newcommand{\diag}{\operatorname{diag}}

\newcommand{\pr}{\operatorname{pr}}

\title[Sharp degree bound for rational proper maps from $\B^2$ to $\B^4$]
{Sharp degree bound for rational proper maps \\from $\B^2$ to $\B^4$}
\author[Tianzhi Hu]{Tianzhi Hu}
\address{ School of Mathematics and Statistics, Wuhan University, Luojiashan, Wuchang, Wuhan, Hubei, 430072, P.R. China}
\email{hutianzhi@whu.edu.cn}

\author[Mai Shi]{Mai Shi}
\address{ School of Mathematics and Statistics, Wuhan University, Luojiashan, Wuchang, Wuhan, Hubei, 430072, P.R. China}
\email{shimai@whu.edu.cn}

\author[Pingsan Yuan]{Pingsan Yuan}
\address{ School of Mathematics and Statistics, Wuhan University, Luojiashan, Wuchang, Wuhan, Hubei, 430072, P.R. China}
\email{pingsanyuan@whu.edu.cn}

\date{}

\begin{document}

\begin{abstract}
We prove D'Angelo's degree conjecture for rational proper holomorphic maps from $\mathbb{B}^2$ to $\mathbb{B}^4$, establishing the sharp degree bound of five. Suppose, to the contrary, that a rational proper map of degree six exists. We associate to the map a characteristic number measuring the degeneracy of its projective differential data. A global intersection-theoretic computation determines this number exactly, while a local analysis along the degeneracy locus yields a strictly larger lower bound for the same quantity. This contradiction excludes degree six and proves the conjectured bound.
\end{abstract}

\subjclass[2020]{32H35, 32H02, 14F05, 14M15}

\maketitle
\setcounter{tocdepth}{1}
\tableofcontents

\section{Introduction}
Let $\B^n=\{z\in\CC^n:\|z\|<1\}$ be the unit ball.
Proper holomorphic maps between balls and their rigidity properties
are important topics in several complex variables and CR geometry.
Two maps $f,g:\B^n\to\B^N$ are said to be equivalent
if $f=\tau\circ g\circ\sigma$ for some
$\sigma\in\operatorname{Aut}(\mathbb B^n)$ and
$\tau\in\operatorname{Aut}(\mathbb B^N)$.
In equal dimensions, Alexander \cite{Alexander1977} proved that every
proper holomorphic self-map of $\B^n$ with $n\ge2$ is an automorphism.

For maps between balls of different dimensions, Webster
\cite{Webster1979} proved that a proper holomorphic map
$\B^n\to\B^{n+1}$, with $n\ge3$, extending
$C^3$-smoothly to the boundary is equivalent to the standard
linear embedding. Huang \cite{Huang1999} extended this conclusion to proper
holomorphic maps $\B^n\to\B^N$ with
$n<N\le2n-2$, assuming only $C^2$ boundary regularity.

Boundary regularity also provides a connection between proper
holomorphic maps and rational maps.
Forstneri\v{c} \cite{Forstneric1989} proved that a proper
holomorphic map $\B^n\to\B^N$, with $2\le n<N$, is rational
if it extends $C^{N-n+1}$-smoothly to the boundary.
Huang \cite{Huang2001} showed that $C^2$ boundary regularity
suffices when $N\le2n-1$.

Classification results for rational proper holomorphic maps
from $\B^n$ to $\B^N$ were obtained by Faran
\cite{Faran1982} for $n=2$ and $N=3$,
Huang--Ji \cite{HuangJi2001} for $n\ge3$ and $N=2n-1$,
and Hamada \cite{Hamada2005} for $n\ge4$ and $N=2n$.
Huang--Ji--Xu \cite{HJX2006} extended Hamada's classification
to the range $2n\le N\le3n-4$ with $n\ge4$.

For recent work on normal forms of degree-three rational
sphere maps in two complex variables, Danielsen-Jensen et al.~\cite{DJGHLXZ2026}
characterized the realizable normalized denominators and established
Gram-matrix normal forms.

A related question concerns the target dimensions in which every
rational proper holomorphic map is equivalent to a map
of the form $(g,0)$, where $g:\B^n\to\B^{N'}$ is a proper
holomorphic map with $N'<N$.
The occurrence of intervals of target dimensions in which all such
maps admit a reduction to a lower-dimensional target is known as the
gap phenomenon. Results on this phenomenon were obtained by
Huang-Ji-Xu \cite{HJX2006}, Huang-Ji-Yin \cite{HJY2014},
Gao-Ng \cite{GaoNg2024}, and Yin-Yuan \cite{YinYuan2026}.

Another fundamental problem is to obtain sharp degree bounds for rational proper holomorphic maps. This is the focus of the present paper.
Let $f:\mathbb B^n\to\mathbb B^N$ with $N\ge n\ge2$, be a rational proper holomorphic map, written in reduced form as 
\[
f=\frac{(P_1,\ldots,P_N)}{Q},
\qquad
\gcd(P_1,\ldots,P_N,Q)=1,
\]
where $P_1,\ldots,P_N,Q\in\mathbb C[z_1,\ldots,z_n]$. Its algebraic degree is defined by
\[
\deg f:=\max\{\deg P_1,\ldots,\deg P_N,\deg Q\}.
\]
This degree is invariant under equivalence.
The degree estimate problem has been studied extensively by D'Angelo \cite{DAngelo1988,DAngeloBook} and D'Angelo-Lebl
\cite{DAngeloLebl2009}. D'Angelo's degree conjecture asserts that
\begin{equation}\label{eq:dangelo-conj}
\deg f\le
\begin{cases}
2N-3, & n=2,\\[2mm]
\dfrac{N-1}{n-1}, & n\ge 3.
\end{cases}
\end{equation}

Several cases of the conjecture have been established using
geometric rank. Huang-Ji-Xu \cite{HJX2006}
proved the conjectured bound for maps of geometric rank one
when $n\ge3$. Through an analysis of maps of geometric rank
two, Ji-Yin \cite{JiYin2020} proved that every rational proper
holomorphic map
$\B^n\to\B^{4n-6}$, with $n\ge7$, has degree
at most three, confirming the conjecture in the third gap
interval and at its upper endpoint.

For $n=2$, D'Angelo-Kos-Riehl \cite{DKR2003} proved the bound
$2N-3$ for monomial proper maps and constructed examples attaining equality.
For general rational proper maps with target $\B^3$,
Faran's classification \cite{Faran1982} implies that
every rational proper holomorphic map
$\B^2\to\B^3$ has degree at most three,
and this bound is sharp. We consider the next target dimension, $N=4$,
for which the conjectured bound is five.

Meylan \cite{Meylan2006} established the general estimate
\begin{equation}\label{eq:meylan}
  \deg f\le\frac{N(N-1)}{2}
\end{equation}
for rational proper holomorphic maps $f:\B^2\to\B^N$.
When $N=4$, this gives $\deg f\le6$.
Thus, proving the conjecture for $(n,N)=(2,4)$
amounts to excluding maps of reduced degree six.
We prove the following:

\begin{theorem}[Critical degree exclusion]\label{thm:degree-six}
There is no rational proper holomorphic map
\[
f:\B^2\longrightarrow\B^4
\]
whose algebraic degree is six.
\end{theorem}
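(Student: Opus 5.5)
We argue by contradiction, following the strategy announced in the abstract. Suppose $f=(P_1,\dots,P_4)/Q$ is a reduced rational proper map $\B^2\to\B^4$ with $\deg f=6$; by Meylan's bound \eqref{eq:meylan} this is the extremal case. Homogenize to obtain a map $F=[Q:P_1:\dots:P_4]\colon\PP^2\dashrightarrow\PP^4$ of degree $6$, sending the sphere $\partial\B^2$ into $\partial\B^4$. First we normalize by automorphisms of source and target, for instance moving a boundary point to a convenient position and using the partial normal forms of Huang--Ji type, so that low-order Taylor data are fixed. Second, we collect the algebraic consequences of properness. The identity $|Q|^2-\sum|P_j|^2=(1-\|z\|^2)\,R(z,\bar z)$ gives a Hermitian form in the $15$-dimensional space of bidegree-$\le 6$ monomials. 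The $5$ polynomials $Q,P_j$ span a subspace $V\subset H^0(\PP^2,\OO(6))$ of dimension $5$. Degree $6=N(N-1)/2$ forces every inequality in Meylan's argument to be an equality. We will extract from this a rigid structure, e.g.\ that the successive osculating spaces of $F$ along the sphere have maximal growth $2,3,\dots$ in a prescribed pattern.

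Next we define the characteristic number. Consider the first and second jet maps of $F$: the map $\EE=V\otimes\OO\to J^1(\OO(6))$ has generic rank $3$, and the second-order map to $J^2(\OO(6))$, of rank $6$, is generically surjective onto a rank-$5$ image. Let $\JJ$ be the degeneracy (Fitting) ideal where the projective second fundamental form drops rank. Define $c(f)$ as the length, or weighted degree, of the corresponding cycle, corrected for the base points of $V$. Porteous/Thom--Porteous gives $c(f)$ as an explicit polynomial in $H=c_1(\OO(1))$ on $\PP^2$. We need the Chern classes of the jet bundles, namely $c(J^1(\OO(d)))=(1+(d-3)H)(1+dH)^{\ldots}$ via $0\to\Omega\otimes\OO(6)\to J^1\to\OO(6)\to0$, and similarly for $J^2$ with $\Sym^2\Omega$. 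Base-point contributions, which occur at finitely many points, are subtracted via a residual intersection formula. This global step should be a routine but careful computation giving an exact integer $c_{\mathrm{glob}}$.

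The local step, which we expect to be the main obstacle, bounds $c(f)$ from below. The sphere-preserving property imposes a reality condition: the degeneracy locus must be invariant under the anti-holomorphic involution associated with $\partial\B^2$, and it cannot meet the sphere transversally. Near each degeneracy point we will use the normal form to show that $\JJ$ has high vanishing order; tangency to the complex tangent lines of the sphere should force the multiplicity to be at least $2$, or $3$ at points on $\partial\B^2$. The difficulty is to show that the degeneracy locus is nonempty with enough points, for instance by showing it must contain the points at infinity of the complex tangents, or a curve meeting the line at infinity. We will first try to do this through a case analysis on the order of contact of $F$ with its osculating hyperplanes at a generic sphere point, using the equality cases from Meylan's argument. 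Summing the local multiplicities then gives $c(f)>c_{\mathrm{glob}}$, a contradiction. Hence no degree-six map exists, and combined with \eqref{eq:meylan} this gives $\deg f\le5$.
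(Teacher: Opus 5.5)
Your outline has the right overall shape (a global characteristic number played against local lower bounds), but as written it has a genuine gap. Neither the number nor the local estimate is actually specified, and the point where properness must enter the global count is missing.

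In your global step you work on $\PP^2$ with $V\otimes\OO\to J^2(\OO(6))$. This map goes from rank $5$ to rank $6$, so it can at best be generically injective, not surjective. Its rank-drop locus has expected codimension two, and its Thom--Porteous class is $c_2(J^2(\OO(6)))$. That number depends only on $\dim V=5$ and the degree $6$, not on the sphere condition. The entire contradiction would therefore have to come from proving that proper maps have excess local multiplicity, and you give no mechanism for this; you concede that even nonemptiness of the locus is unresolved.

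The reality principle you invoke also does not exist in that form. Fixed loci of antiholomorphic involutions of $\PP^2$ are totally real, so none fixes the real hypersurface $\partial\B^2$. Nor is there an involution that $F$ is known to intertwine. The reflection principle for the sphere is a point-to-line correspondence: a point $W$ goes to its polar line $L_W$. Finally, the claim that degree $6=N(N-1)/2$ forces equality throughout Meylan's argument is neither justified nor used to produce anything concrete.

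The missing idea is to build properness into the global object itself. The polarized identity $F(Z)^tG(W)=(Z,W)_{1,2}R(Z,W)$ says that on each source line $L_W$ the five sextics $F_i|_{L_W}$ satisfy a linear relation whose coefficients $G(W)$ are constant along $L_W$. On the incidence threefold $\II$ this yields a square map $\Phi_0:\AG\to\JJ=J^3_{\pr_2}\OO_\II(6,0)$ between rank-four sheaves.

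Its determinant is a section of $\OO_\II(12,0)$, hence a degree-$12$ form $\lambda$ on the source plane. It is nonzero by the restriction-rank lemma and reducedness. The degeneracy is thus a divisor, and $\int_\II h\,\ch_2(\TTzero)=33$ follows from $[\AG]$ and the filtration of $\JJ$. The local estimate needed is $\delta_C\ge 3\ord_C\lambda$ for each component; summing gives at least $3\deg\lambda=36>33$.

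Proving that estimate is the bulk of the paper:
\begin{itemize}
\item a propagation lemma reduces the corank-one case to $\deg\LL_0\ge3$;
\item the extremal case $M\simeq\OO(3)$ is excluded via the pure-trace reduction, which forces $C$ to be a line that is Hermitian negative once tangent and secant lines are ruled out;
\item a parity argument gives $\nu\in\{1,3,5\}$, because $\operatorname{Wr}(Y)$ is a square and the polarity $\iota_C$ has no fixed points; each of the three cases is then handled separately;
\item higher corank is excluded through fourth-order divisibility, a negative-line argument, and Burns--Krantz.
\end{itemize}
Nothing in your proposal, such as multiplicity at least $2$ or $3$ at boundary points, substitutes for this, so the argument as proposed does not reach a contradiction.
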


Combining \cref{thm:degree-six} with Meylan's estimate immediately gives the sharp form of the D'Angelo conjecture in this dimension pair.

\begin{corollary}[D'Angelo conjecture for $\B^2\to\B^4$]\label{cor:dangelo-24}
Every rational proper holomorphic map $f:\B^2\to\B^4$ satisfies
\[
\deg f\le 5.
\]
The bound is sharp.
\end{corollary}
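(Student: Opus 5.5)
The plan is to argue by contradiction, as the abstract suggests. Suppose $f=(P_1,\dots,P_4)/Q$ is a reduced rational proper map $\B^2\to\B^4$ with $\deg f=6$. First we homogenize: put $z_0$ as the extra variable and let $F=[Q:P_1:\dots:P_4]:\PP^2\dashrightarrow\PP^4$ be the induced rational map of degree $6$. Properness gives the identity $|Q|^2-\sum|P_j|^2=\lambda(z)\,(1-\|z\|^2)$ on $\CC^2$, and after homogenization a Hermitian identity relating $F$ to the sphere quadrics. Meylan's bound \eqref{eq:meylan} is sharp only in a very rigid situation, so we first extract normalizations from the degree-six hypothesis. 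We would use the Huang--Ji style partial normalization at a boundary point and the fact that the $(1,1)$-Hermitian form of $f$ has rank at most $5$. These should force the span of the second-order data to have specified dimensions and impose strong restrictions on the leading-order terms.

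Next we would define the characteristic number. Consider the osculating (projective second fundamental form) data of $F$ along $\PP^2$. Concretely, take the bundle map $\Phi:\Sym^2 T_{\PP^2}(-1)\to N_F$ coming from the second jets of $F$; equivalently, take the map from the $2$-jet bundle $J^2(\OO(6))^\vee$ to the trivial $\CC^5$. Since $\rank$ of second-order data ($1+2+3=6$) exceeds $5$, this map must drop rank along a degeneracy locus $D\subset\PP^2$, which contains the curve determined by the complexified sphere. Let $\delta$ be the degree of the degeneracy scheme, taken via the cokernel or the length of the torsion. We then compute $\delta$ globally by Porteous/Thom--Porteous. The degeneracy class is a Chern class of $J^2(\OO(6))$ twisted appropriately. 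Using $\ch$ of the jet bundle via the filtration with graded pieces $\OO(6)$, $\Omega(6)$, $\Sym^2\Omega(6)$, we obtain an explicit integer, after subtracting contributions of base points of $F$ (which a degree-six reduced map may have, and which we bound via the identity above).

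The local step is to analyze $D$ near each of its components. The sphere identity shows that along the Segre-type curve $\{z\cdot\bar w=1\}$ restricted appropriately (the complexified boundary), the map is tangent to the target quadric. Therefore the second-order data acquires extra degeneracy there, with multiplicity at least two in the normal direction. Away from it, any further degeneracy contributes nonnegatively. Summing the local lengths, computed via $\ord$ of minors in adapted coordinates at generic and special points of the degeneracy curves, should give a lower bound strictly exceeding the global value of $\delta$, which is the contradiction.

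The main obstacle is this local lower bound. We need to show that the tangency coming from properness forces length strictly greater than the naive count along the whole degeneracy curve, including at its singular points and at base points of $F$. At these points, cancellations in the minors could a priori lower the contribution. We would first attempt a uniform estimate by restricting $\Phi$ to a general line transverse to the curve and applying semicontinuity. We would handle the finitely many special points by a separate Puiseux-type analysis of the minors, using the normalization from the first step to control leading terms.
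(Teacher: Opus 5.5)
Your proposal has a genuine gap: the invariant you propose cannot see properness, and the missing idea is the one that makes the paper's argument work. Your map $\OO_{\PP^2}^{\oplus5}\to J^2(\OO_{\PP^2}(6))$, $e_i\mapsto j^2F_i$, goes from rank five to rank six. The locus where it drops rank therefore has expected codimension $(5-4)(6-4)=2$. Porteous gives the class $c_2\bigl(J^2(\OO_{\PP^2}(6))\bigr)$, which is independent of $F$ and generically counts points, not a degeneracy curve.

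Nor is there a ``curve determined by the complexified sphere'' in $\PP^2$. A proper complex-analytic degeneracy locus cannot contain an open piece of $\partial\B^2$, so tangency to the target sphere cannot be turned into degeneracy this way. Properness becomes a holomorphic condition only on the incidence threefold $\II\subset\PP^2_Z\times\PP^2_W$, as $F(Z)^tG(W)=0$. Thus $\sum_iG_i(W)F_i$ vanishes on each source line $L_W$. The third-jet map along $L_W$ then descends from $\CC^5$ to $\CC^5/\CC\,G(W)$, i.e.\ to the rank-four sheaf $\AG$, which matches the rank-four \emph{relative} jet bundle $\JJ=J^3_{\pr_2}\OO_\II(6,0)$. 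Only this makes $\Phi_0:\AG\to\JJ$ square, with determinant a degree-twelve form $\lambda$ on $\PP^2_Z$, a genuine degeneracy curve. The filtration of $\JJ$ with graded pieces $\OO_\II(6,0),\OO_\II(4,1),\OO_\II(2,2),\OO_\II(0,3)$ then yields the specific value $\int_\II h\,\ch_2(\TTzero)=33$. Your set-up produces neither a square map nor an integer to beat.

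Second, the local lower bound, which you rightly flag as the main obstacle, is essentially the whole proof, and semicontinuity plus a Puiseux analysis of minors cannot supply it. What is needed is $\delta_C\ge3\ord_C\lambda$ for every component $C$, so that the local contributions total at least $3\deg\lambda=36>33$. In the corank-one case this reduces, via propagation through the $t$-adic layers, to $\deg\LL_0\ge3$. The extremal configurations (saturated image $M\simeq\OO(3)$) are not excluded by any multiplicity count. Instead one shows:
\begin{itemize}
\item they force a pure-trace identity;
\item $C$ is then a Hermitian-negative line, after separate boundary arguments rule out lines meeting $\partial\B^2$;
\item the restriction degree satisfies $\nu\in\{1,3,5\}$ by a ramification-parity argument;
\item each value of $\nu$ is excluded using linear fullness, reducedness and separated-rank estimates.
\end{itemize}
The higher-corank cases need fourth-order divisibility and further rigidity arguments. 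Your heuristic that ``tangency to the target quadric'' gives extra normal multiplicity has no counterpart in any of this.

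Finally, two parts of the statement are not addressed. You should say explicitly that Meylan's bound $\deg f\le N(N-1)/2=6$ reduces the upper bound to excluding degree six. Sharpness also needs an explicit example, such as $(z,w)\mapsto(z^5,\sqrt5\,z^3w,\sqrt5\,zw^2,w^5)$, which is proper because $x^5+5x^3y+5xy^2+y^5=1$ on $x+y=1$.
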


Sharpness is already visible in the monomial category.  For example,
\begin{equation}\label{eq:degree-five-example}
(z,w)\longmapsto \bigl(z^5,\sqrt5\,z^3w,\sqrt5\,zw^2,w^5\bigr)
\end{equation}
is proper from $\B^2$ to $\B^4$ and has degree five, since for $x=|z|^2$, $y=|w|^2$ and $x+y=1$,
\[
x^5+5x^3y+5xy^2+y^5=1.
\]
This example belongs to the sharp family underlying the monomial theorem of \cite{DKR2003}.

We indicate the structure of the proof.  Suppose, toward a contradiction, that
\[
f:\B^2\longrightarrow\B^4
\]
has algebraic degree six.  In projective coordinates choose a reduced homogeneous lift
\[
F=(F_0,\ldots,F_4):\CC^3\longrightarrow\CC^5,
\qquad \deg F_i=6,
\]
whose components have no common nonconstant factor.  Let
\[
J_{1,n}=\diag(1,-1,\ldots,-1),
\qquad
(Z,W)_{1,n}:=Z^tJ_{1,n}W,
\]
and, for a polynomial vector $F$, put
\[
F^*(W):=\overline{F(\overline W)},
\qquad
G(W):=J_{1,4}F^*(W).
\]
The properness gives the bihomogeneous relation
\[
F(Z)^tG(W)=(Z,W)_{1,2}R(Z,W),
\]
where $R$ has bidegree $(5,5)$; this is the polarized identity
\eqref{eq:polarized-identity} below.  Its natural geometric setting is the incidence threefold
\[
\II:=\bigl\{(Z,W)\in\PP^2_Z\times\PP^2_W:(Z,W)_{1,2}=0\bigr\},
\]
with projections $\pr_1$ and $\pr_2$.  Thus $F(Z)^tG(W)=0$ on $\II$, and the $\pr_2$-fiber over $W$ is the source line $L_W=\{Z\in\PP^2:(Z,W)_{1,2}=0\}.$

The vector $G(W)$ defines the rank-four torsion-free sheaf
\[
\AG:=\coker\!\left(
\OO_\II(0,-6)\xrightarrow{\ G\ }\OO_\II^{\oplus5}
\right),
\]
while the value and the first three derivatives of degree-six sections along the lines $L_W$ form the rank-four relative jet bundle $\JJ:=J_{\pr_2}^3\OO_\II(6,0).$
The five coordinate functions of $F$ induce the square jet map
\[
\Phi_0:\AG\longrightarrow\JJ,
\]
which is the morphism defined in \eqref{eq:Phi}.  Let $\TTzero:=\coker(\Phi_0).$ Since $\Phi_0$ is generically an isomorphism, $\TTzero$ is a torsion sheaf recording precisely where the relative third-order projective data fail to be nondegenerate.

The first main ingredient is a global characteristic number attached to this torsion sheaf.  Put $h:=c_1\!\left(\OO_\II(1,0)\right)$ and define
\[
\kappa(F):=\int_\II h\,\ch_2(\TTzero),
\]
where $\ch_2$ denotes the codimension-two component of the Chern character.  Geometrically, $h$ corresponds to slicing by a general line in the source plane, so $\kappa(F)$ may be viewed as the total transverse defect of the third-order jet map seen on such a slice. Using intersection theory, we obtain
\[
\kappa(F)=33;
\]
this is the calculation recorded in \eqref{eq:global33}.

The second main ingredient is a local analysis of the same degeneracy.  The determinant of $\Phi_0$ is a section of $\OO_\II(12,0)$ and therefore is pulled back from a nonzero homogeneous form
\[
\lambda\in H^0(\PP^2,\OO_{\PP^2}(12)).
\]
Thus its zero divisor in the source plane has the form
\[
(\lambda)=\sum_\alpha m_\alpha C_\alpha,
\qquad
m_\alpha=\ord_{C_\alpha}\lambda,
\qquad
\sum_\alpha m_\alpha\deg C_\alpha=12.
\]
The integer $m_\alpha$ is the transverse order to which the determinant of the jet map vanishes along $C_\alpha$.

For the local study it is convenient to replace $\AG$ by its locally free hull.  More precisely, if
\[
E_G:=\coker\!\left(
\OO_{\PP^2_W}(-6)\xrightarrow{\ G\ }\OO_{\PP^2_W}^{\oplus5}
\right),
\qquad
E:=E_G^{**},
\qquad
\mathcal A:=\pr_2^*E,
\]
then $\Phi_0$ extends uniquely across the codimension-two exceptional set to a morphism
\[
\Phi:\mathcal A\longrightarrow\JJ,
\qquad
\TT:=\coker(\Phi).
\]

Now fix an irreducible component $C=C_\alpha$ and write $m:=\ord_C\lambda.$ Choose a general smooth point $z\in C$, a local equation $t=0$ for $C$ near $z$, and set $\Gamma_z:=\pr_1^{-1}(z)\simeq\PP^1.$ Since locally $\det\Phi$ vanishes to order $m$ along $t=0$, the torsion sheaf $\TT$ is killed by $t^m$ near $\Gamma_z$.  Its transverse structure is therefore resolved by the finite $t$-adic layers
\[
\EE_j
:=\left.
\frac{t^j\TT}{t^{j+1}\TT}
\right|_{\Gamma_z},
\qquad 0\le j<m.
\]
We measure the contribution of $C$ by the local defect
\[
\delta_C:=\sum_{j=0}^{m-1}\deg\EE_j.
\]
  Thus $m$ records how many transverse layers are forced by the vanishing of $\det\Phi$, while $\delta_C$ records the total degree carried by those layers.  The key local result of the paper is the jet-defect estimate
\[
\delta_C\ge 3m,
\]
proved in \cref{prop:jet-defect}. Its proof constitutes the only genuinely local part of the argument and is carried out in detail in \cref{sec:local-proof} with the auxiliary ingredients collected in the two appendices, \cref{app:auxiliary-lemmas,app:boundary-line-exclusion}.

Finally, the jet-defect estimate $\delta_C\ge 3m$ immediately implies that $\kappa(F)>33$ as explained in \cref{subsec:prop31}. This contradiction completes the proof.

\vspace{.5cm}

\noindent\textbf{AI Declaration:} The proofs of the main results in this paper arose from discussions and exchanges between the author and AI. In addition, AI tools were also used for calculations and for polishing the writing. All
mathematical statements have been independently checked by the authors, who take full
responsibility for the content.

\section{Homogenization, complexification, and preliminary rigidity}\label{sec:homogeneous}

\subsection{Homogenization and complexification}\label{subsec:hom-complex}

We use the projective model of the ball.  Put
\[
J_{1,n}=\diag(1,-1,\ldots,-1)
\]
and define the Hermitian form $\langle\ ,\ \rangle_{1,n}$ by
\[
\langle Z,U\rangle_{1,n}=Z^tJ_{1,n}\overline U,
\qquad
\langle Z,Z\rangle_{1,n}=|Z_0|^2-|Z_1|^2-\cdots-|Z_n|^2.
\]
The ball $\B^n$ is identified with the positive projective lines in $\PP^n$, and its boundary with the projectivized null cone $\langle Z,Z\rangle_{1,n}=0$.

For independent complex variables $Z$ and $W$ we write
\begin{equation}\label{eq:bilinear-source}
(Z,W)_{1,n}:=Z^tJ_{1,n}W.
\end{equation}
Thus $(Z,W)_{1,n}$ is the complex bilinear polarization of the Hermitian form $\langle\ ,\ \rangle_{1,n}$.  In particular,
\[
(Z,W)_{1,2}=Z_0W_0-Z_1W_1-Z_2W_2.
\]
This notation separates the Hermitian form $\langle\ ,\ \rangle_{1,n}$ from its algebraic complexification $(\ ,\ )_{1,n}$.

Assume, toward a contradiction, that
\[
f:\B^2\longrightarrow\B^4
\]
is rational proper of algebraic degree six.  Choose a reduced homogeneous lift
\begin{equation}\label{eq:F-lift}
F=(F_0,F_1,F_2,F_3,F_4):\CC^3\longrightarrow\CC^5,
\qquad \deg F_j=6,
\end{equation}
whose entries have no common nonconstant factor.

For a polynomial $H$ in the $Z$-variables let
\[
H^*(W):=\overline{H(\overline W)},
\]
so that $H^*$ is obtained by conjugating the coefficients and replacing $Z$ by $W$.  We set
\begin{equation}\label{eq:G-def}
G(W):=J_{1,4}F^*(W).
\end{equation}
Then
\[
F(Z)^tG(W)=(F(Z),F^*(W))_{1,4}.
\]

A rational proper map between balls extends holomorphically across the source sphere; we use the boundary regularity theorem of Cima--Suffridge \cite{CimaSuffridge1990}.  Properness therefore gives
\[
\langle F(Z),F(Z)\rangle_{1,4}=0
\qquad\text{whenever}\qquad
\langle Z,Z\rangle_{1,2}=0.
\]
The same equality is automatic at a point where $F(Z)=0$.  Complexifying the boundary identity gives
\begin{equation}\label{eq:complexified-zero}
F(Z)^tG(W)=0
\qquad\text{on}\qquad
(Z,W)_{1,2}=0.
\end{equation}
Equivalently, the complexified sphere identity preserves the corresponding Segre incidence relation.  Since the bilinear form $(Z,W)_{1,2}$ is irreducible, divisibility in the polynomial ring gives a bihomogeneous polynomial $R$ of bidegree $(5,5)$ such that
\begin{equation}\label{eq:polarized-identity}
F(Z)^tG(W)=(Z,W)_{1,2}R(Z,W).
\end{equation}

\subsection{Base-point freeness and linear fullness}\label{subsec:base-linear}

The following elementary consequence of boundary regularity is used later in the Hermitian-tangent case.

\begin{lemma}[No base point on the closed ball]\label{lem:no-base}
The reduced homogeneous lift $F$ has no common zero over $\overline{\B^2}$ in the projective model.  The same holds for $G$ after coefficient conjugation.
\end{lemma}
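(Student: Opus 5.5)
We have to show that $F$ has no common zero at any point $[Z]$ with $\langle Z,Z\rangle_{1,2}\ge 0$, i.e.\ over the closed ball in the projective model. The plan is to treat interior points and boundary points separately. Throughout we use that the reduced lift has $\gcd(F_0,\dots,F_4)=1$, which holds because $\gcd(P_1,\ldots,P_N,Q)=1$ and homogenization preserves coprimality.

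\emph{Interior points.} In the affine chart $Z_0=1$, where $F_0$ is the homogenized denominator $Q$, suppose $F(Z)=0$ at a point of the open ball. The map $f=(F_1,\dots,F_4)/F_0$ is holomorphic on $\B^2$. If $F_0(Z)=0$ at an interior point, then each $F_j/F_0$ is holomorphic there, and $\gcd(F_j,F_0)$ taken over all $j$ is $1$. Hence the zero set of $F_0$ is not contained in the pole set of $f$. By the standard fact that a rational function which is holomorphic near a point of $\{F_0=0\}$ requires every irreducible factor of $F_0$ to divide the numerator, some factor of $F_0$ would divide all $F_j$, which is a contradiction. Therefore $F_0\neq0$ on $\B^2$, and so $F$ has no zero in the interior.

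\emph{Boundary points.} This is the main step. By the Cima--Suffridge boundary regularity theorem already invoked above, $f$ extends holomorphically to a neighborhood of $\overline{\B^2}$. Hence the denominator of the reduced representation does not vanish on $\partial\B^2$: near a boundary point $p$, $f$ is holomorphic, and the same divisibility argument as in the interior case shows that no irreducible component of $\{F_0=0\}$ passes through $p$. A component through $p$ cannot consist entirely of removable points without forcing a common factor, because holomorphy of $F_j/F_0$ on an open set meeting $\{F_0=0\}$ along a hypersurface forces $F_0\mid F_j$ locally there, and hence globally by irreducibility. Consequently $F_0(p)\neq0$, so $F(p)\neq0$.

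Alternatively, one can argue directly: on the boundary, $|F_0|^2=\sum_{j\ge1}|F_j|^2$ by the identity $\langle F,F\rangle_{1,4}=0$. This reduces the boundary case to $F_0(p)\neq0$, which is the statement just proved.

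\emph{The statement for $G$.} It follows formally. By definition $G(W)=J_{1,4}\overline{F(\overline W)}$ and $J_{1,4}$ is invertible, so $G(W)=0$ exactly when $F(\overline W)=0$. Conjugation preserves $\langle W,W\rangle_{1,2}$, so the conjugated closed ball is mapped to itself and the first part applies.

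The main obstacle is to make precise the claim that ``holomorphic extension plus coprimality implies the denominator is nonvanishing.'' The plan is to use the local unique factorization of $\mathcal O_p$, together with the fact that globally coprime polynomials remain coprime in $\mathcal O_p$ away from a finite set. To handle a possibly finite exceptional set of points, we use the fact that $F_0$ cannot have an isolated zero in dimension two: its zero set is a curve. So any zero would propagate along a curve meeting the ball's neighborhood in a set where $f$ is holomorphic, which yields the contradiction.
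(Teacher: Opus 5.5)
Your proposal is correct and follows essentially the paper's route: holomorphy in the interior plus the Cima--Suffridge extension at the boundary give a local holomorphic lift $(1,f)$, and then a zero of $F_0$ at $p$ forces the curve $\{F_0=0\}$ through $p$ into the common zero set of the $F_i$, contradicting $\gcd(F_0,\ldots,F_4)=1$. The paper phrases the same step as $F=aH$ with an arbitrary nonvanishing local lift $H$, instead of working in the chart where $F_0$ is the denominator. One small correction: as you state it, your ``standard fact'' is false, and it should refer only to irreducible factors of $F_0$ that vanish at the point in question; that restricted version is all you use, and your closing argument via the finiteness of the common zero set already justifies it.
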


\begin{proof}
By holomorphicity in the interior and the boundary regularity theorem of
Cima--Suffridge \cite{CimaSuffridge1990}, near every
$p\in\overline{\B^2}$ the projective map admits a nonvanishing local
holomorphic lift $H=(H_0,\ldots,H_4)$.  After shrinking the neighborhood,
we may assume $H_0\neq0$ and write
\[
 F=aH
\]
for a holomorphic function $a$.  If $F(p)=0$, then $a(p)=0$.  Any local
irreducible hypersurface component of $(a=0)$ is contained in the common
zero set of the polynomials $F_0,\ldots,F_4$; its Zariski closure is a
codimension-one component of that common zero set.  Hence an irreducible
polynomial defining this component divides every $F_i$, contradicting the
reducedness of $F$.  Thus $F(p)\neq0$.  The assertion for $G$ follows by
coefficient conjugation.
\end{proof}

We next record a restriction lemma for plane linear systems.  Its role is to turn degeneracy of a relative Wronskian on general lines into a common-factor statement.

\begin{lemma}[Restriction rank]\label{lem:restriction}
Let $V\subset H^0(\PP^2,\OO(d))$ be a linear system.
\begin{enumerate}[label=\textup{(\roman*)}]
\item If $\dim V=5$ and the image of $V\to H^0(L,\OO_L(d))$ has dimension at most three for a general line $L$, then the members of $V$ have a common factor of degree at least $d-2$.
\item If $\dim V=4$ and the general restriction rank is at most two, or if $\dim V=3$ and the general restriction rank is at most one, then a system with no common factor cannot occur.
\item If $\dim V=2$ and the general restriction rank is zero, then $V=0$.
\end{enumerate}
\end{lemma}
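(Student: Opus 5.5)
The plan is to study the rational map given by the linear system rather than individual sections. Write $V=D\cdot V'$, where $D$ is the fixed (common) factor of $V$, of degree $e$, and $V'\subset H^0(\PP^2,\OO(d-e))$ has no fixed component. The general restriction rank of $V$ and of $V'$ agree, because multiplying by $D|_L\neq0$ is injective on $H^0(L,\OO_L(\cdot))$. So it suffices to show the following for $V'$, with $\dim V'=\dim V=:k+1$. Consider $\varphi=\varphi_{V'}:\PP^2\dashrightarrow\PP^k$. Its image is nondegenerate, and the restriction rank on a general line $L$ equals $1+\dim\Span\varphi(L)$.

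Part (iii) is immediate. If the restriction rank is zero, every $s\in V$ vanishes on a general line. A fixed nonzero $s$ contains at most $d$ lines, so $s=0$.

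For (i) and (ii), I split according to $\dim\overline{\varphi(\PP^2)}$.

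\emph{Image a curve.} Then $V'$ is composed with a pencil. Because $V'$ has no fixed component, $V'\subset \Sym^a\langle p,q\rangle$ with $p,q$ coprime forms. On a general line, $p|_L$ and $q|_L$ are coprime, so the monomials $p^iq^{a-i}$ restrict to linearly independent sections. Hence the restriction rank equals $\dim V'$. This contradicts the hypotheses in every case of (i) and (ii).

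\emph{Image a surface $S$.}
\begin{enumerate}
\item[(a)] Case $\dim V=3$, rank $\le1$: $\varphi$ is dominant onto $\PP^2$ but contracts every general line to a point. Since general lines cover $\PP^2$, this is impossible.
\item[(b)] Case $\dim V=4$, rank $\le2$: $S\subset\PP^3$ is nondegenerate and contains the $2$-parameter family of lines $\varphi(L)$. A surface in $\PP^3$ carrying a $2$-dimensional family of lines is a plane, which contradicts nondegeneracy.
\item[(c)] Case $\dim V=5$, rank $\le3$: each $\varphi(L)$ is a plane curve, so $S\subset\PP^4$ carries a $2$-dimensional family of plane curves. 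Here I invoke the classical characterization of surfaces with a $2$-dimensional family of plane curves: a plane, or a (projected) Veronese surface with its conics. In $\PP^4$ the surface is not a plane, so it is a projected Veronese surface. The curves $\varphi(L)$ are then conics, and this forces $\deg V'=d-e\le2$. Therefore $e\ge d-2$, which is the claimed common factor.
\end{enumerate}

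The main obstacle is step (c), where one must justify the Veronese classification, or an ad hoc substitute. As a substitute I would first try the following. The planes $\Pi_L=\Span\varphi(L)$ form a $2$-parameter family in $\PP^4$. Two general planes in $\PP^4$ meet in a point, and $\varphi(L)\cap\varphi(L')\supset\varphi(L\cap L')$. From this I would deduce that $\varphi^*\OO(1)$ restricted to lines has degree at most $2$, via a count of the intersection $\varphi(L)\cdot\varphi(L')$ inside $S$. I would also check degenerate subcases, such as $\varphi$ not birational onto $S$ or $\varphi(L)$ a line. Throughout, the reduction $V=D\cdot V'$ supplies the common factor, and all hypotheses concern only general lines.
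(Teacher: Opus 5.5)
\textbf{The gap is in step (c), which carries the whole content of part (i).} The rest of your proposal is sound: part (iii), the curve-image case (via Bertini and L\"uroth, $V'\subset\Sym^a\langle p,q\rangle$ with $p|_L,q|_L$ coprime), and (a), (b) for part (ii). The one routine check there is that $\{\varphi(L)\}$ is a genuinely two-dimensional family. That holds because otherwise a one-parameter family of lines, hence a dense subset of $\PP^2$, would map into a single curve.

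In (c) there are three problems.
\begin{enumerate}
\item You invoke a classification of surfaces carrying a two-dimensional family of plane curves, but you neither state it precisely nor prove it. As you phrase it, it is false: every surface in $\PP^3$ has $\infty^3$ plane sections. A correct classical version needs nondegeneracy in $\PP^N$ with $N\ge4$. Its proof also contains essentially the statement you are trying to prove, since after your reduction (i) says exactly that such a $V'$ consists of conics' pullbacks, i.e.\ has degree at most two.
\item Even granting that $S$ is a projected Veronese surface and the $\varphi(L)$ are conics, ``this forces $d-e\le2$'' does not follow. A general $L$ misses the base points, so you only get
\[
d-e=\deg\varphi|_L^*\OO(1)=2\deg\bigl(\varphi|_L\colon L\to\varphi(L)\bigr).
\]
You still need $\varphi|_L$ to be birational onto its image. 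One way is to lift $\varphi$ to the normalization of $S$, which is birational to $\PP^2$, and then show that a dominant rational self-map of $\PP^2$ sending general lines onto lines is projective linear.
\item Your proposed substitute is not yet an argument. Knowing that $\Pi_L\cap\Pi_{L'}$ is a point only gives $\varphi(L)\cap\varphi(L')=\{\varphi(L\cap L')\}$. Turning that into a bound on $\deg\varphi|_L^*\OO(1)$ needs the same birationality, plus intersection theory on a possibly singular $S$, and neither is supplied.
\end{enumerate}

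\textbf{The missing idea.} The paper uses an elementary first-order deformation argument on the source side, and this is what you need instead of the classification. Take $L=(\ell=0)$ general and put $K_j:=V'\cap\ell^jH^0(\PP^2,\OO(d-e-j))$.
\begin{itemize}
\item The kernel of restriction to $L$ is $K_1$, so $\dim K_1\ge2$ when $\dim V'=5$ and $\dim V'|_L\le3$.
\item Suppose $\ell^ja\in K_j\setminus K_{j+1}$. Extend it to a family $(\ell+tm)^ja_t\in V'$ over nearby lines; this is possible because the $K_j$ have locally constant dimension near a general $L$.
\item Take the $j$-th $t$-derivative at $t=0$ and restrict to $L$. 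This gives $j!\,m^j|_L\,a|_L\in V'|_L$ for every linear form $m$, hence $H^0(L,\OO_L(j))\,a|_L\subset V'|_L$.
\item Since $\dim V'|_L\le3$, this forces $V'|_L=h\cdot H^0(L,\OO_L(2))$ for a binary form $h$ of degree $d-e-2$. This comes either from $K_2\neq0$ directly, or from two independent symbols $a|_L,b|_L$ and the resulting linear syzygy.
\item A general $L$ misses the finite base scheme of $V'$, so $h$ is constant, i.e.\ $d-e\le2$.
\end{itemize}
This is precisely the infinitesimal form of your moving planes $\Pi_L$: the symbols record how the hyperplanes containing $\Pi_L$ move with $L$. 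It gives the degree bound directly, with no classification and no birationality issue.
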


\begin{proof}
Remove the greatest common divisor $D$ of the system and put
$e=d-\deg D$; we keep the notation $V$ for the residual system.  Its base
scheme is finite.  Choose a general line $L=(\ell=0)$ avoiding this base
scheme, put $U=V|_L$, and define
\[
 K_j=V\cap \ell^jH^0(\PP^2,\OO(e-j)).
\]
For a nearby line $L_t=(\ell+tm=0)$, the dimensions of the finitely many
spaces $K_j(L_t)$ are locally constant after choosing $L$ generally.
After shrinking the neighborhood of $L$ in the dual plane, these spaces
therefore form local vector subbundles.  If
$\ell^ja\in K_j\setminus K_{j+1}$, then, for any linear form $m$, we may
choose a local section
\[
s(t)=(\ell+tm)^ja_t\in V,
\qquad a_0=a.
\]
Since $s^{(j)}(0)\in V$, restriction to $L$ gives $s^{(j)}(0)|_L=j!\,m^j a|_L\in U,$ because every other term contains a positive power of $\ell$ and hence
vanishes on $L$.  As the restrictions $m|_L$ span
$H^0(L,\OO_L(1))$, and the $j$-th powers of linear forms span
$H^0(L,\OO_L(j))$, we obtain
\begin{equation}\label{eq:restriction-symbol}
 H^0(L,\OO_L(j))\,a|_L\subset U.
\end{equation}

For (i), assume $\dim V=5$ and $\dim U\le3$.  Then $\dim K_1\ge2$.
If $K_2\neq0$, choose $r\ge2$ maximal with $K_r\neq0$ and
$\ell^ra\in K_r\setminus K_{r+1}$.  By \eqref{eq:restriction-symbol}, $U$ contains a subspace of
dimension $r+1$, so $r=2$ and $\dim U=3$.  Hence
$U=a|_L H^0(L,\OO_L(2))$.  If $e>2$, the positive-degree form $a|_L$
has a zero, giving a base point of $U$, contrary to the choice of $L$.
Thus $e\le2$.

Suppose now that $K_2=0$.  Choose independent elements
$\ell a,\ell b\in K_1$.  Their symbols $a|_L,b|_L$ are independent, and
\eqref{eq:restriction-symbol} gives
\[
 H^0(L,\OO_L(1))\operatorname{span}\{a|_L,b|_L\}\subset U.
\]
The left-hand side has dimension at least three, hence equals $U$ and has
dimension three.  The corresponding multiplication map therefore has a
one-dimensional kernel, giving a nontrivial linear syzygy
$\ell_1a+\ell_2b=0$ on $L$.  The linear forms $\ell_1,\ell_2$ are
independent; by the UFD property of the binary polynomial ring,
$U=hH^0(L,\OO_L(2))$ for a form $h$ of degree $e-2$.  Again $e>2$ would
give a base point of $U$.  Thus $e\le2$, and in either case
$\deg D=d-e\ge d-2$.

For (ii), if $K_2\neq0$, the maximal-$r$ argument above produces a
subspace of $U$ of dimension at least three.  If $K_2=0$, two independent
first symbols do the same.  This contradicts $\dim U\le2$ when
$\dim V=4$, and a fortiori $\dim U\le1$ when $\dim V=3$.  Finally, in
(iii), restriction rank zero on a general line forces every member of
$V$ to vanish identically.  Hence $V=0$.
\end{proof}

\begin{lemma}[Linear fullness]\label{lem:linear-full}
The entries $F_0,\ldots,F_4$ are linearly independent.  Equivalently, the projective image is linearly full in $\PP^4$.
\end{lemma}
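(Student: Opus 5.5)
Suppose, toward a contradiction, that the entries are dependent, i.e.\ there is a nonzero $c\in\CC^5$ with $c^tF\equiv0$. The aim is to show that $f$ then factors through a proper map into a lower-dimensional ball, and that this forces its degree down to at most three.

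\textbf{Step 1: the relation is null.} Let $V=\Span\{F_0,\dots,F_4\}$. The polarized identity \eqref{eq:polarized-identity} restricted to the diagonal $W=\overline Z$ gives
\[
\langle F(Z),F(Z)\rangle_{1,4}=\langle Z,Z\rangle_{1,2}\,R(Z,\overline Z).
\]
For $Z$ in the ball cone, $\langle Z,Z\rangle_{1,2}>0$, and the left side is also positive because $f$ maps into $\B^4$. The image of $Z\mapsto F(Z)$ is contained in the hyperplane $c^\perp$ (Euclidean pairing). Rewrite $c^tF=0$ as $\langle F,u\rangle_{1,4}=0$ with $u:=J_{1,4}\overline c$. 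Then $F(\B^2)$ lies in the projective hyperplane $H_u=\{\langle X,u\rangle_{1,4}=0\}$.

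\textbf{Step 2: reduce to a lower-dimensional target.} Split into cases according to the sign of $\langle u,u\rangle_{1,4}$.
\begin{itemize}
\item If $\langle u,u\rangle<0$, then $H_u\cap\B^4$ is a totally geodesic copy of $\B^3$. After an automorphism of $\B^4$ we may take $u=e_4$, so $F_4\equiv0$ and $f$ is a proper map $\B^2\to\B^3$.
\item If $\langle u,u\rangle>0$, then $H_u$ misses $\overline{\B^4}$, contradicting $f(\B^2)\subset\B^4$.
\item If $\langle u,u\rangle=0$, then $H_u$ is tangent to $\partial\B^4$ at the single point $[u]$, and $H_u\cap\overline{\B^4}=\{[u]\}$. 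A nonempty open image in $\B^4$ cannot lie there, so this case is also impossible.
\end{itemize}
Thus only the negative case survives.

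\textbf{Step 3: contradiction with Faran.} In the surviving case $f$ is, up to equivalence, $(g,0)$ with $g:\B^2\to\B^3$ rational proper, and reducedness is preserved. By Faran's classification \cite{Faran1982}, $\deg g\le3$, so $\deg f\le 3<6$, a contradiction. The equivalent statement about linear fullness of the projective image is just a restatement of the independence of the $F_i$.

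\textbf{Main obstacle.} Step 2 is cheap once one remembers that $f$ really maps into $\B^4$ and that its image is open in the source sense. The point needing care is the null case, where I would use \cref{lem:no-base}: the image of $\overline{\B^2}$ is well defined, and the interior points map into the open ball, which is disjoint from $H_u$ when $u$ is null. If a purely algebraic route were preferred, one could instead apply \cref{lem:restriction} to $V$ with $\dim V\le4$, but the geometric argument above seems simpler.
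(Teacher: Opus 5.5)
Your argument is correct and is essentially the paper's: both put the image in a proper target subspace, use the positive image of an interior point to get Lorentzian signature (hence a proper map into a lower-dimensional ball with the same reduced degree), and then quote a known degree bound. The differences are cosmetic. The paper takes the smallest subspace $\PP(M)$, of signature $(1,r)$, and treats $r=3,2,1$ via Meylan's estimate, Alexander's theorem and a fiber-dimension argument. Your sign trichotomy for a single hyperplane normal $u$ instead lands in $\B^3$ without linear fullness, which is harmless because Faran's classification bounds the degree of every rational proper map $\B^2\to\B^3$ by three.
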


\begin{proof}
Suppose that the image is contained in a proper target projective
subspace, and let $\PP(M)$ be the smallest such subspace.  Since it
contains the image of an interior point of $\B^2$, the target Hermitian
form has a positive vector in $M$; its restriction to $M$ is therefore
nondegenerate of signature $(1,r)$, where $r=\dim\PP(M)\le3$.  Properness excludes $r=0$.  After a
target Hermitian isometry, $f$ is a linearly full proper map
\[
 \B^2\longrightarrow\B^r
\]
of the same reduced degree six.  If $r=3$, Meylan's estimate gives
$\deg f\le3$ \cite{Meylan2006}; if $r=2$, Alexander's theorem gives an
automorphism, hence degree one \cite{Alexander1977}.  The case $r=1$ is
impossible, since a proper map from the Stein domain $\B^2$ has compact
analytic fibers, whereas the dimension theorem gives a positive-dimensional
generic fiber.  Each case contradicts $\deg f=6$.
\end{proof}

\section{The projective-algebraic framework}\label{sec:projective}

The main proof is organized on the incidence variety of points and lines in the source projective plane.  This section introduces the global jet map, computes its total defect, and reduces the theorem to a local estimate along the determinant divisor.

\subsection{The incidence threefold and the sheaves $\mathcal A_G$ and $\mathcal J$}\label{subsec:AGJ}

Let
\begin{equation}\label{eq:incidence}
\II=\bigl\{(Z,W)\in\PP^2_Z\times\PP^2_W:(Z,W)_{1,2}=0\bigr\},
\end{equation}
be the incidence variety and 
\[
\pr_1:\II\to\PP^2_Z,
\qquad
\pr_2:\II\to\PP^2_W
\]
be the two projections. For \(p,q\in\mathbb Z\), set $\OO_\II(p,q)
:=
\pr_1^*\OO_{\PP^2_Z}(p)
\otimes
\pr_2^*\OO_{\PP^2_W}(q).$ We write
\[
h:=c_1\!\left(\OO_\II(1,0)\right),
\qquad
k:=c_1\!\left(\OO_\II(0,1)\right).
\]
The $\pr_2$-fiber over $W$ is the source line
\[
L_W=\{Z\in\PP^2:(Z,W)_{1,2}=0\}.
\]

Restricting \eqref{eq:polarized-identity} to $\II$ gives
\begin{equation}\label{eq:FG-zero-on-I}
F(Z)^tG(W)=0.
\end{equation}
The entries of $G$ have no common divisor, so their common zero set in $\PP^2_W$ has codimension at least two.  Define
\begin{equation}\label{eq:AG}
\AG:=\coker\!\left(
\OO_\II(0,-6)\xrightarrow{\ G\ }\OO_\II^{\oplus5}
\right).
\end{equation}
This is a rank-four torsion-free sheaf. 

The second sheaf records the Taylor data of degree-six sections along
the $\pr_2$-fibers.  For $W\in\PP^2_W$, let
$L_W=\pr_2^{-1}(W)\simeq\PP^1$.  At $(Z,W)\in\II$, the fiber of the
relative third jet bundle consists of local sections of
$\OO_\II(6,0)|_{L_W}$ modulo those vanishing to order at least four at
$Z$.  In a local coordinate $u$ on $L_W$, it is represented by
\[
\left(
s(Z),\,
\partial_us(Z),\,
\partial_u^2s(Z),\,
\partial_u^3s(Z)
\right).
\]
Thus these spaces form a rank-four vector bundle
\begin{equation}\label{eq:J-def}
\JJ:=J_{\pr_2}^3\OO_\II(6,0).
\end{equation}

\subsection{The relative third jet map and the global weight $33$}\label{subsec:global33}

There is a canonical relative jet-evaluation morphism
\[
\widetilde\Phi_0:\OO_\II^{\oplus5}\longrightarrow\JJ,
\qquad
e_i\longmapsto j_{\pr_2}^3(F_i),
\]
where $e_i$ is the $i$-th standard basis element of $\OO_\II^{\oplus5}$.  Because $G_i(W)$ is constant along a $\pr_2$-fiber and \eqref{eq:FG-zero-on-I} holds on $\II$,
\[
\sum_{i=0}^4G_i(W)j_{\pr_2}^3(F_i)=\sum_{i=0}^4j_{\pr_2}^3(G_i(W)F_i)=0.
\]
Hence $\widetilde\Phi_0$ descends to
\begin{equation}\label{eq:Phi}
\Phi_0:\AG\longrightarrow\JJ.
\end{equation}
On a line $L_W$, the transpose of \eqref{eq:Phi} is represented by the value and the first three directional derivatives of $F|_{L_W}$.

\begin{lemma}
 The determinant morphism $\det\Phi_0\in H^0(\II,\OO_\II(12,0))$ is a non-zero section.
\end{lemma}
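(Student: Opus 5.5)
Two things need to be shown: that $\det\Phi_0$ is a section of $\OO_\II(12,0)$, and that it is not identically zero. For the line bundle I compute determinants of both sides. From the defining sequence $0\to\OO_\II(0,-6)\to\OO_\II^{\oplus5}\to\AG\to0$, which is exact because $G$ has no common divisor and so is injective as a sheaf map, we get $\det\AG\simeq\OO_\II(0,6)$. This holds as an invertible sheaf on the normal variety $\II$, since the failure of local freeness lies in codimension $\ge2$. For the jet bundle, the standard filtration of $\JJ=J^3_{\pr_2}\OO_\II(6,0)$ has graded pieces $\OO_\II(6,0)\otimes\Omega_{\pr_2}^{\otimes j}$ for $j=0,\dots,3$. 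Hence
\[
\det\JJ\simeq\OO_\II(24,0)\otimes\Omega_{\pr_2}^{\otimes6}.
\]
On $\II$ the relative tangent bundle of $\pr_2$ restricts to $\OO(2)$ on each line $L_W$. A standard computation on the flag variety gives $\Omega_{\pr_2}\simeq\OO_\II(-2,1)$; one can check this by adjunction, using $K_\II=\OO_\II(-2,-2)$ and $\pr_2^*K_{\PP^2}=\OO_\II(0,-3)$. Therefore $\det\JJ\simeq\OO_\II(12,6)$, and
\[
\det\Phi_0\in H^0\bigl(\II,\det\JJ\otimes(\det\AG)^{-1}\bigr)=H^0(\II,\OO_\II(12,0)).
\]
This agrees with the later claim that the determinant is pulled back from $\PP^2_Z$.

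Nonvanishing reduces to the statement that $\Phi_0$ is generically an isomorphism, i.e. generically surjective onto the rank-four $\JJ$. Fix a general $W$ and a general point $Z\in L_W$. By the description of $\Phi_0$ on $L_W$, the image of $\Phi_0$ at $(Z,W)$ is spanned by the third jets at $Z$ of the restrictions $F_0|_{L_W},\dots,F_4|_{L_W}$. Let $U_W\subset H^0(L_W,\OO(6))$ be the span of these restrictions. If $\dim U_W\ge4$, then a general point of $L_W$ is not a Weierstrass (inflection) point of the linear series $U_W$, so the Wronskian-type jet evaluation $U_W\to J^3_Z$ has rank four. This follows from the nonvanishing of the Wronskian for linearly independent binary forms in characteristic zero. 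So it suffices to show that the general restriction rank is at least four.

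To get that rank bound I apply \cref{lem:restriction}(i) to $V=\Span\{F_0,\dots,F_4\}$. By \cref{lem:linear-full}, $\dim V=5$, and the $F_i$ have no common factor because the lift is reduced. If the general restriction rank were $\le3$, part (i) would force a common factor of degree at least $d-2=4$, contradicting reducedness. One point to check is that the lines $L_W$ with $W$ general are general lines of $\PP^2_Z$. This holds because $W\mapsto L_W$ is the isomorphism $\PP^2_W\to(\PP^2_Z)^\vee$ induced by the nondegenerate form $J_{1,2}$.

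The main obstacle is the line-bundle bookkeeping, specifically the identification $\Omega_{\pr_2}\simeq\OO_\II(-2,1)$ and the handling of $\det\AG$ across the non-locally-free locus. I would first verify the former by restricting to a fiber $L_W$ (degree $-2$) and to a fiber of $\pr_1$, where the class must have degree $1$. The nonvanishing itself then follows directly from the two earlier lemmas.
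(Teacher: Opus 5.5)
Your proposal is correct and follows the paper's proof. You compute $\det\AG=\OO_\II(0,6)$ and $\det\JJ=\OO_\II(12,6)$ from the jet filtration with $\Omega_{\II/\PP^2_W}\simeq\OO_\II(-2,1)$, and you get nonvanishing from the Wronskian criterion combined with \cref{lem:restriction}(i) and the reducedness of $F$. Your extra remarks, on the reflexive determinant across the codimension-two non-locally-free locus and on the genericity of the lines $L_W$, fill in details the paper leaves implicit.
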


\begin{proof}
Consider the relative fibre product
\[
  Y=\II\times_{\PP^2_W} \II
\]
with projections $\pi_1,\pi_2\colon Y\longrightarrow \II$ and relative diagonal $\Delta\colon \II\hookrightarrow Y$. Let $\mathscr I_\Delta\subset\OO_Y$ be its ideal sheaf.  By definition,
\[
  J_{\pr_2}^3\mathcal{O}_\II(6,0)
  =
  \pi_{1*}\left(
    \pi_2^*\mathcal{O}_\II(6,0)\otimes
    \frac{\OO_Y}{\mathscr I_\Delta^4}
  \right).
\]

For $0\leq j\leq 4$, define
\[
  \mathcal{F}^j
  :=
  \pi_{1*}\left(
    \pi_2^*\mathcal{O}_\II(6,0)\otimes
    \frac{\mathscr I_\Delta^j}{\mathscr I_\Delta^4}
  \right),
\]
and the inclusions $\mathscr I_\Delta^{j+1}\subset
\mathscr I_\Delta^j$ gives the filtration
\[
  J_{\pr_2}^3\mathcal{O}_\II(6,0)=\mathcal{F}^0\supset \mathcal{F}^1\supset \mathcal{F}^2
  \supset \mathcal{F}^3\supset \mathcal{F}^4=0.
\]
Thus we have
\[
  \frac{\mathcal{F}^j}{\mathcal{F}^{j+1}}
  \cong
  \pi_{1*}\left(
    \pi_2^*\mathcal{O}_\II(6,0)\otimes
    \frac{\mathscr I_\Delta^j}
         {\mathscr I_\Delta^{j+1}}
  \right).
\]
Therefore \[
  \frac{\mathcal{F}^j}{\mathcal{F}^{j+1}}
  \cong
  \mathcal{O}_\II(6,0)\otimes\operatorname{Sym}^j\Omega_{\II/\PP^2_W}
  =\mathcal{O}_\II(6,0)\otimes\Omega_{\II/\PP_W^2}^{\otimes j}
\]
by the canonical isomorphism $\frac{\mathscr I_\Delta}{\mathscr I_\Delta^2}
  \cong
  \Omega_{\II/\PP^2_W}$ and the relative adjunction:
\begin{equation}\label{eq:relative-cotangent}
\Omega_{\II/\PP^2_W}\simeq\OO_\II(-2,1).
\end{equation} 
Thus the grading pieces of $\mathcal F^\bullet$ are
\begin{equation}\label{eq:J-filtration}
\OO_\II(6,0),\qquad
\OO_\II(4,1),\qquad
\OO_\II(2,2),\qquad
\OO_\II(0,3).
\end{equation}

From \eqref{eq:AG} and \eqref{eq:J-filtration},
\[
\det\AG=\OO_\II(0,6),
\qquad
\det\JJ=\OO_\II(12,6).
\]
Therefore $\det\Phi_0\in H^0(\II,\OO_\II(12,0))
\simeq H^0(\PP^2_Z,\OO_{\PP^2_Z}(12))$, where the isomorphism is induced
by $\pr_1^*$.  We may write
\begin{equation}\label{eq:lambda-def}
\det\Phi_0=\lambda(Z),
\qquad
\lambda\in H^0(\PP^2,\OO(12)).
\end{equation}

The form $\lambda$ is nonzero.  Indeed, if the determinant vanished identically, then the Wronskian criterion would imply that the restriction to a general source line of the five-dimensional system generated by $F_0,\ldots,F_4$ has dimension at most three.  \cref{lem:restriction}(i) would then give a common factor of degree at least four, contradicting the reducedness of $F$.  Thus $\Phi_0$ is generically an isomorphism. 
\end{proof}

Since $\AG$ is torsion-free, consider the exact sequence
\begin{equation}\label{eq:T0}
0\longrightarrow\AG\xrightarrow{\ \Phi_0\ }\JJ
\longrightarrow\TTzero\longrightarrow0.
\end{equation}

\begin{lemma}
    We have the following identity
    \begin{equation}\label{eq:global33}
\int_\II h\,\ch_2(\TTzero)=33.
\end{equation}
\end{lemma}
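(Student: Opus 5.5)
The plan is to get $\ch_2(\TTzero)$ from additivity of the Chern character and then integrate against $h$ using the intersection ring of $\II$; no local information about $F$ enters. By the preceding lemma, $\Phi_0$ is generically an isomorphism. Since $\AG$ is torsion-free, $\Phi_0$ is injective and \eqref{eq:T0} is exact. The defining presentation \eqref{eq:AG} is also exact on the left, because $G$ is a nonzero section of a locally free sheaf on the integral variety $\II$. Additivity of $\ch$ on short exact sequences of coherent sheaves on the smooth projective threefold $\II$ then gives
\[
\ch(\TTzero)=\ch(\JJ)-\ch(\AG),
\qquad
\ch(\AG)=5-\ch\OO_\II(0,-6).
\]
By the filtration with graded pieces \eqref{eq:J-filtration},
\[
\ch(\JJ)=\sum e^{c_1(\text{piece})}.
\]
This step does not need $\AG$ to be locally free; that is the one point worth stating explicitly.

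Next I set up the intersection numbers. The variety $\II$ is a smooth divisor of class $h+k$ in $\PP^2\times\PP^2$, with $h^3=k^3=0$ there. Hence, for $a+b=3$,
\[
\int_\II h^ak^b=\int_{\PP^2\times\PP^2}h^ak^b(h+k),
\]
which gives $h^3=k^3=0$ and $h^2k=hk^2=1$ on $\II$. For a class $x=ah+bk$ this yields
\[
\int_\II h\,x^2=2ab+b^2.
\]

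Now I compute the two contributions.
\begin{itemize}
\item For $\JJ$, we have $\ch_2(\JJ)=\tfrac12\sum x_i^2$ with $x_i$ equal to $6h$, $4h+k$, $2h+2k$, $3k$. The integrals $\int_\II h\,x_i^2$ are $0$, $9$, $12$, $9$ respectively. So $\int_\II h\,\ch_2(\JJ)=\tfrac12\cdot 30=15$.
\item For $\AG$, we have $\ch_2(\AG)=-\tfrac12(6k)^2=-18k^2$. Hence $\int_\II h\,\ch_2(\AG)=-18$.
\end{itemize}
Subtracting gives $\int_\II h\,\ch_2(\TTzero)=15+18=33$.

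The only step needing care is bookkeeping rather than a real obstacle. One must identify $\JJ$'s graded pieces correctly via \eqref{eq:relative-cotangent}, in particular the sign of the twist $\OO_\II(-2,1)$, which I would double-check from $K_\II=\OO_\II(-2,-2)$ by adjunction and $K_{\PP^2_W}=\OO(-3)$. One must also justify additivity for the non-locally-free $\AG$ and $\TTzero$, which holds since $\ch$ is defined on $K_0(\II)$ for a smooth variety.
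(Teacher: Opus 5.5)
Your proposal is correct and follows the paper's proof. Both arguments compute $\ch(\TTzero)=\ch(\JJ)-\ch(\AG)$ in $K$-theory from \eqref{eq:AG} and the graded pieces \eqref{eq:J-filtration}, then integrate against $h$ using $\int_\II h^3=0$ and $\int_\II h^2k=\int_\II hk^2=1$. The only cosmetic difference is that you get these numbers by viewing $\II$ as a divisor of class $h+k$ in $\PP^2\times\PP^2$ and integrate term by term, whereas the paper first uses $h^2-hk+k^2=0$ to rewrite $28h^2+8hk+25k^2$ as $3h^2+33hk$.
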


\begin{proof}
By the projective bundle formula \cite[\S3.3]{Fulton},
its Chow ring satisfies
\begin{equation}\label{eq:chow-ring}
h^2-hk+k^2=0,
\qquad
\int_\II h^2k=\int_\II hk^2=1.
\end{equation}

In $K$-theory, from \eqref{eq:AG} and \eqref{eq:J-filtration},
\begin{align*}
[\AG]&=5[\OO_\II]-[\OO_\II(0,-6)],\\
[\JJ]&=[\OO_\II(6,0)]+[\OO_\II(4,1)]
+[\OO_\II(2,2)]+[\OO_\II(0,3)],
\end{align*}
where $[\cdot]$ denotes the class of a coherent sheaf in the Grothendieck
group $G_0(\II)$; see, for example, \cite[Chapter II]{Weibel2013}. 
Using $\ch(\OO_\II(a,b))=e^{ah+bk}$ 
(see, e.g., \cite[\S3.2]{Fulton}), we obtain
\[
\ch(\TTzero)
=
e^{6h}+e^{4h+k}+e^{2h+2k}+e^{3k}-5+e^{-6k}.
\]
Taking the components of degrees two gives
\begin{align}\label{eq:ch2T0}
\ch_2(\TTzero)=28h^2+8hk+25k^2=3h^2+33hk.
\end{align}  It follows that
\[
\int_\II h\,\ch_2(\TTzero)=\int_\II 3h^3+33h^2k=33.  \qedhere
\]
\end{proof}

\subsection{The local transverse defect and reduction to the key estimate}\label{subsec:prop31}

Recall the determinant divisor defined in \eqref{eq:lambda-def}:
\begin{equation}\label{eq:lambda-divisor}
(\lambda)=\sum_\alpha m_\alpha C_\alpha,
\qquad
e_\alpha=\deg C_\alpha.
\end{equation}

We first replace the torsion-free source of the jet map by its locally free
hull, so that the local defect is carried by the cokernel of a morphism
between vector bundles.  On $\PP^2_W$ put
\[
E_G:=\coker\!\left(
\OO_{\PP^2}(-6)\xrightarrow{\ G\ }\OO_{\PP^2}^{\oplus5}
\right).
\]
Since $\pr_2$ is a $\PP^1$-bundle, it is flat and
$\AG=\pr_2^*E_G$.  Let
\[
E:=E_G^{**},
\qquad
\mathcal A:=\pr_2^*E,
\qquad
\QQ:=\mathcal A/\AG=\pr_2^*(E/E_G).
\]
The sheaf $E$ is locally free on the smooth surface $\PP^2_W$.
Moreover, $E/E_G$ is supported at finitely many points, so $\QQ$ is
supported on the corresponding finite union of $\pr_2$-fibers.  Away
from this codimension-two set, \eqref{eq:Phi} extends from $\AG$ to
$\mathcal A$; since $\Hom(\mathcal A,\JJ)$ is locally free on the
smooth threefold $\II$, Hartogs extension gives a unique morphism
\begin{equation}\label{eq:extended-Phi}
\Phi:\mathcal A\longrightarrow\JJ.
\end{equation}
Its determinant is still $\lambda$, hence $\Phi$ is injective.  Set
\begin{equation}\label{eq:Tlocal}
\TT:=\coker(\Phi).
\end{equation}
Then
\begin{equation}\label{eq:Q-sequence}
0\longrightarrow\QQ\longrightarrow\TTzero
\longrightarrow\TT\longrightarrow0.
\end{equation}

We now define the local defect of a component $C=C_\alpha$.
Put
\[
m:=m_\alpha=\ord_C\lambda,
\]
choose a general smooth point $z\in C$ lying on no other component of
$(\lambda)$, and let $t$ be a local equation of $C$ near $z$.  Write
\[
\Gamma:=\pr_1^{-1}(z)\simeq\PP^1.
\]
Since locally $\lambda=u\,t^m$ with $u$ a unit, the adjugate identity
$\lambda\TT=0$ gives $t^m\TT=0$ near $\Gamma$.  Define the $t$-adic
layers
\begin{equation}\label{eq:t-adic-layers}
\EE_j:=
\left.
\frac{t^j\TT}{t^{j+1}\TT}
\right|_\Gamma,
\qquad j\ge0.
\end{equation}
Thus $\EE_j=0$ for $j\ge m$.  We choose $z$ in the dense open subset
of $C$ on which the ranks and degrees of these layers take their
generic values.

For a coherent sheaf $\mathscr F$ on $\PP^1$, we use the additive
degree convention
\[
\deg\mathscr F
=
\deg(\mathscr F/\tors\mathscr F)
+\operatorname{length}(\tors\mathscr F).
\]
The \emph{local transverse defect} of $C$ is
\begin{equation}\label{eq:deltaC}
\delta_C:=\sum_{j\ge0}\deg\EE_j.
\end{equation}
This is independent of the choice of the general point $z$ and of
multiplying $t$ by a unit.  At the generic point of $\Gamma$, Smith
normal form in a transverse DVR gives
\[
\sum_{j\ge0}\rank\EE_j=m.
\]
Thus $\delta_C$ measures the total degree carried by the $m$ transverse
layers of the cokernel along a general direction fiber over $C$.

The key local statement is the following.

\begin{proposition}[Jet-defect estimate]\label{prop:jet-defect}
For every irreducible component $C$ of $(\lambda)$,
\begin{equation}\label{eq:prop31}
\delta_C\ge 3\,\ord_C\lambda.
\end{equation}
\end{proposition}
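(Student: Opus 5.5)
Fix a component $C$ of $(\lambda)$, a general smooth point $z\in C$ with transverse parameter $t$, and the fiber $\Gamma=\pr_1^{-1}(z)\simeq\PP^1$; this fiber parametrizes the source lines $L_W$ through $z$. The plan is to restrict $\Phi$ to the formal neighbourhood of $\Gamma$ and read the layers $\EE_j$ from a Smith normal form of $\Phi$ over the discrete valuation ring in $t$, with the coefficients varying along $\Gamma$.

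\emph{Step 1 (the layers as cokernels on $\Gamma$).} Near $\Gamma$ the map $\Phi$ is a $4\times4$ matrix over $\OO_\Gamma[[t]]$ whose determinant is $u\,t^m$ with $u$ a unit. Over the generic point of $\Gamma$, I would choose Smith invariants $t^{a_1},\dots,t^{a_4}$ with $\sum a_i=m$. Then $t^j\TT/t^{j+1}\TT$ has generic rank $\#\{i:a_i>j\}$. Globally on $\Gamma$, each $\EE_j$ is a quotient of subquotients of $\JJ|_\Gamma$ cut out by the $t$-adic filtration of $\Phi$. The aim is to express $\EE_j$ as the cokernel of a map of vector bundles on $\Gamma$,
\[
\mathcal A_j|_\Gamma\longrightarrow \JJ_j|_\Gamma,
\]
where $\JJ_j$ is the part of the jet filtration \eqref{eq:J-filtration} not yet hit at order $t^j$. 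We have $\mathcal A|_\Gamma=E|_{\pr_2(\Gamma)}$, and $\pr_2(\Gamma)$ is the line $\{W:(z,W)_{1,2}=0\}$ in $\PP^2_W$. The graded pieces of $\JJ|_\Gamma$ have degrees $0,1,2,3$ in $k$, because $h|_\Gamma=0$. It follows that
\[
\deg\EE_j=\deg(\text{target})-\deg(\text{image}),
\]
and the image degree is bounded above by degrees of subsheaves of $E$ restricted to a line.

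\emph{Step 2 (Wronskian/jet-order interpretation).} A drop of rank at $z$ in the direction of $L_W$ means that some linear combination $\sum c_iF_i$ restricted to $L_W$ vanishes to order at least $4$ at $z$. Such a combination gives an inflection of the curve $F|_{L_W}$. The intended mechanism is the following. If the kernel at layer $j$ involves the order-$3$ jet only, the missing quotient has degree $3$ per unit of rank. Lower-order degeneracies would force the system $V=\Span(F_i)$ to acquire base points or factors along $C$. I would exclude these by \cref{lem:restriction}(ii),(iii), which rule out restriction-rank collapse, together with \cref{lem:no-base} and the linear fullness in \cref{lem:linear-full}. With those cases excluded, each unit of $\sum\rank\EE_j=m$ carries degree at least $3$ (the $\OO(0,3)$ piece), up to corrections from $\deg E|_{\text{line}}$.

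\emph{Step 3 (the main obstacle).} The hard part is controlling these corrections. The splitting type of $E$ on the line $\pr_2(\Gamma)$ can be unbalanced. Moreover, the image of $\mathcal A$ in the layer could be a positive-degree subsheaf that absorbs degree, and degeneracy could sit in the value or first-derivative jets rather than the third. I would first try a case split on the type of degeneracy along $C$:
\begin{enumerate}[label=\textup{(\alph*)}]
\item $C$ is a line, so that some $L_W$ equals $C$;
\item $C$ meets the sphere $\langle Z,Z\rangle_{1,2}=0$ in a Hermitian-tangent way;
\item the generic case.
\end{enumerate}
In case (c), I expect the relation $F^tG=0$ to identify the image bundle with a twist of $E^*$. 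Its degree is then computed from $c_1(E)=6$ via \eqref{eq:AG}, and this yields $\deg\EE_j\ge3\rank\EE_j$. Cases (a) and (b) would require the boundary-line exclusion arguments alluded to in the appendices, using \cref{lem:no-base} to prevent degeneracy over the closed ball. Summing $\deg\EE_j\ge3\rank\EE_j$ over $j$ then gives $\delta_C\ge3m$.
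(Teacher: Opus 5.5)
Your proposal has a genuine gap. The per-layer inequality $\deg\EE_j\ge3\rank\EE_j$ is asserted but never proved, and the mechanism you offer for it does not work.

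\textbf{Where the argument fails.}
\begin{enumerate}[label=\textup{(\arabic*)}]
\item Already for $j=0$ the jet-filtration count loses degree. In the corank-one case with projective differential of rank two, the residual block \eqref{eq:residual-rank2} is a map $\OO(-1)\oplus\OO^{\oplus2}\to\OO(2)\oplus\OO(3)$ on $\Gamma$. Its saturated image can be $M\simeq\OO(3)$, and then the count gives only $\deg\LL_0\ge2$. In the rank-one case, \eqref{eq:rank1-degree-bound} likewise gives only $2$ when $d=3$, $e=1$.
\item These degree-losing configurations are not the kind you propose to exclude. They are exactly the pure-trace condition \eqref{eq:pure-trace} along $C$, which produces no base point and no common factor. \cref{lem:restriction} cannot see them: it concerns restriction rank on a \emph{general} line, whereas here the defect lives only on lines through points of $C$.
\item Your case (c) mechanism, identifying the image with a twist of $E^*$ and reading its degree from $c_1(E)=6$, is unsupported. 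The value of $\deg\LL_0$ depends on how the image sits relative to the jet filtration, for instance whether it contains the $\OO(3)$-direction while missing part of $\OO(2)$. Neither $c_1(E)$ nor the splitting type of $E$ on $\pr_2(\Gamma)$ detects this.
\item Step 1 is not justified. A Smith normal form is available only at the generic point of $\Gamma$, and identifying $\EE_j$ with a cokernel into ``the part of the jet filtration not yet hit'' has no basis. You give no control of $\EE_j$ for $j\ge1$. The paper obtains that control only in corank one, through the nonzero maps $\LL_0\to\LL_j$ induced by $t^j$ (\cref{lem:propagation}). It treats corank $\ge2$ not by estimating layers but by showing that case cannot occur.
\end{enumerate}

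\textbf{The two missing ideas.}
\begin{enumerate}[label=\textup{(\alph*)}]
\item The pure-trace reduction: \cref{lem:pure-trace-reduction}, and its analogue at the start of \cref{subsec:higher-corank}. In every degree-losing configuration, the span $V$ of $F$ and its first derivatives is constant along $C$ and contains the higher derivatives. This gives $c^3\mid n(F)$ for $n\in\operatorname{Ann}(V)$, or $c^4\mid\vartheta(F)$ in higher corank. Linear fullness and degree six then force $C$ to be a line. This is what actually disposes of your ``generic'' case (c).
\item For lines, a genuine rigidity argument that uses the real Hermitian structure of the sphere identity, not just $F^tG=0$. This has three parts:
\begin{itemize}
\item Exclusion of tangent and secant lines (\cref{prop:B-boundary-line-exclusion}), via a Siegel normalization, Blaschke products and a positive-definite factorization.
\item On a Hermitian negative line: the degree alternative $\nu\in\{1,3,5\}$ from the Wronskian-square identity and polarity, the projection normal form, and the signature and separated-rank arguments of Cases I--III.
\item The higher-corank exclusions \cref{lem:higher-rank-one-rigidity,lem:fourth-contact}, the latter using Burns--Krantz.
\end{itemize}
\end{enumerate}

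Your case split does not match this structure. The hard case is a Hermitian negative line, which your plan never addresses, and the appendix arguments you cite only reduce to it.
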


\begin{proof}[Proof of \cref{thm:degree-six} assuming \cref{prop:jet-defect}]
For each component $C_\alpha$, after removing a proper closed subset we may
assume that the $t$-adic graded sheaves
\[
\frac{t^j\TT}{t^{j+1}\TT}
\]
along $D_\alpha:=\pr_1^{-1}(C_\alpha)$ are flat over $C_\alpha$, and hence
that their ranks and degrees on the fibers of
$D_\alpha\to C_\alpha$ are constant.  This follows from generic flatness,
and only finitely many $j$ occur since $t^{m_\alpha}\TT=0$ generically
along $D_\alpha$.

Choose a general line $\ell\subset\PP^2_Z$ such that
\begin{itemize}
\item $\ell$ meets every $C_\alpha$ transversely, only at smooth points
lying in the above flat locus and away from the other components of
$(\lambda)$;
\item for $S:=\pr_1^{-1}(\ell)$ the restrictions of
\eqref{eq:T0} and \eqref{eq:Q-sequence} are Tor-independent, and
$S$ meets $\operatorname{Supp}(\QQ)$ properly.
\end{itemize}
The surface $S$ represents the divisor class $h$, so
\begin{equation}\label{eq:global-to-surface}
\int_\II h\,\ch_2(\TTzero)
=
\int_S\ch_2(\TTzero|_S).
\end{equation}
Since \eqref{eq:Q-sequence} remains exact on $S$ and $\QQ|_S$ is
zero-dimensional,
\begin{equation}\label{eq:T0-vs-T-on-S}
\int_S\ch_2(\TTzero|_S)
=
\int_S\ch_2(\TT|_S)
+\operatorname{length}(\QQ|_S)
\ge
\int_S\ch_2(\TT|_S).
\end{equation}

For $z\in\ell\cap C_\alpha$, put
\[
\Gamma_z:=\pr_1^{-1}(z)\subset S.
\]
Near $z$, let $t$ be a local equation of $C_\alpha$.  By transversality,
$t|_S$ is a local equation of $\Gamma_z$, and, since no other component
passes through $z$,
\[
\lambda=u\,t^{m_\alpha}
\]
with $u$ a unit.  Hence $t^{m_\alpha}\TT|_S=0$ near $\Gamma_z$.
Moreover, flatness of the graded sheaves over $C_\alpha$ implies that
restriction to $S$ is exact on the $t$-adic filtration.  Thus its
associated graded sheaves are
\[
i_{z*}\EE_{j,z},
\qquad
\EE_{j,z}:=
\left.
\frac{t^j\TT}{t^{j+1}\TT}
\right|_{\Gamma_z},
\qquad
i_z:\Gamma_z\hookrightarrow S,
\]
and, by the choice of $z$,
\[
\sum_j\deg\EE_{j,z}=\delta_{C_\alpha}.
\]

For any coherent sheaf $\mathscr E$ on
$\Gamma_z\simeq\PP^1$, Grothendieck--Riemann--Roch gives
\[
\int_S\ch_2(i_{z*}\mathscr E)
=
\deg\mathscr E
-\frac{\rank\mathscr E}{2}\deg N_{\Gamma_z/S}.
\]
Since $\Gamma_z$ is a fiber of the ruled surface $S\to\ell$,
$\Gamma_z^2=0$, and therefore
\begin{equation}\label{eq:GRR-fiber}
\int_S\ch_2(i_{z*}\mathscr E)=\deg\mathscr E.
\end{equation}
Here our additive degree convention also accounts for any
zero-dimensional torsion in $\mathscr E$.  Additivity of the Chern
character along the $t$-adic filtrations now yields
\begin{equation}\label{eq:T-defect-sum}
\int_S\ch_2(\TT|_S)
=
\sum_\alpha
\sum_{z\in\ell\cap C_\alpha}
\sum_{j\ge0}\deg\EE_{j,z}
=
\sum_\alpha e_\alpha\delta_{C_\alpha}.
\end{equation}
Combining \eqref{eq:T0-vs-T-on-S}, \eqref{eq:T-defect-sum}, and
\cref{prop:jet-defect}, we obtain
\[
\int_S\ch_2(\TTzero|_S)
\ge
\sum_\alpha e_\alpha\delta_{C_\alpha}
\ge
3\sum_\alpha e_\alpha m_\alpha
=
3\deg\lambda
=
36.
\]
This contradicts \eqref{eq:global33}.  Hence no reduced degree-six rational
proper map $\B^2\to\B^4$ exists.
\end{proof}

\section{Proof of \cref{prop:jet-defect}}\label{sec:local-proof}
Fix an irreducible component
$C\subset(\lambda)$ and put
\[
m:=\ord_C\lambda.
\]
We retain the notation introduced in
\cref{subsec:prop31}: choose a general smooth point $z\in C$ lying on
no other component of $(\lambda)$, let $t$ be a local equation of $C$
near $z$, and set $\Gamma:=\pr_1^{-1}(z)\simeq\PP^1.$ Thus the $t$-adic layers $\EE_j$ and the local defect $\delta_C$ are
those defined in \eqref{eq:t-adic-layers} and \eqref{eq:deltaC}.

After shrinking to a Zariski neighborhood
$U\subset\PP^2_Z$ of $z$, we may write
\[
C\cap U=(t=0),\qquad
\lambda=u\,t^m,\qquad
u\in\OO_U^\times,
\]
and put $D:=\pr_1^{-1}(C\cap U).$ We choose $z$ in the dense open subset on which the ranks and degrees
of the layers \eqref{eq:t-adic-layers} take their generic values, and,
as needed below, outside the additional proper closed subsets arising
in the argument.

For a fixed general point $z\in C$, the fiber $\Gamma=\pr_1^{-1}(z)\simeq\PP^1$ parametrizes the source lines $L_W=\pr_2^{-1}(W)\subset\PP^2_Z$ passing through $z$.  For each $W\in\Gamma$, the tangent space
$T_zL_W$ is a one-dimensional subspace of $T_z\PP^2$, and the
correspondence
\[
W\longmapsto T_zL_W
\]
identifies $\Gamma\simeq\PP(T_z\PP^2).$ Thus, after choosing affine coordinates centered at $z$, a point
$W\in\Gamma$ may be represented by a nonzero tangent vector $v=(s,u)\in T_zL_W\subset T_z\PP^2,$ defined up to scalar.  The relative jet along the fiber $L_W$ at $z$
is then represented by
\[
F(z),\qquad D_vF(z),\qquad D_v^2F(z),\qquad D_v^3F(z).
\]
As $W$ varies in $\Gamma$, the projective direction $[v]=[s:u]$
varies over $\PP(T_z\PP^2)$.

\subsection{Two preliminary lemmas}\label{subsec:propagation}

We first record two elementary facts that will be used repeatedly.

\begin{lemma}[Gauge invariance]\label{lem:gauge-invariance}
Let $a$ be a nowhere-vanishing holomorphic function in a neighborhood of
$z$, and replace the local lift $F$ by $\widehat F=aF$.  Then the relative
jet map through order three is obtained from the original one by composing
with a bundle automorphism of $\JJ$.  In particular, the special-fiber
rank, the saturated image degrees on $\Gamma$, the cokernel, and all the
$t$-adic layers \eqref{eq:t-adic-layers} are unchanged up to canonical
local isomorphism.
\end{lemma}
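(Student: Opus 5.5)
The plan is to use the Leibniz rule to show that rescaling the lift by $a$ acts on relative jets through a unipotent (triangular) transformation of $\JJ$ with unit diagonal entries. Throughout we work on a neighborhood $V\subset\PP^2_Z$ of $z$, over which $a\in\OO_V^\times$, and on $\pr_1^{-1}(V)\subset\II$. The rescaling $\widehat F=aF$ leaves the ratio $F_i/F_j$ unchanged, so it is a legitimate change of local lift.

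The first step is to write the relative jet along a fiber $L_W$ in a local coordinate $u$ on the line. Leibniz gives, for $k\le3$,
\[
\partial_u^k(aF_i)=\sum_{r=0}^{k}\binom{k}{r}\,\partial_u^{r}a\;\partial_u^{k-r}F_i .
\]
Hence the vector $(aF_i,\partial_u(aF_i),\partial_u^2(aF_i),\partial_u^3(aF_i))$ equals $M_a$ applied to $(F_i,\partial_uF_i,\partial_u^2F_i,\partial_u^3F_i)$. Here $M_a$ is the lower-triangular $4\times4$ matrix with entries $\binom{k}{r}\partial_u^{k-r}a$. Its diagonal is $a,a,a,a$, so $\det M_a=a^4$, which is a unit.

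The second step is to check that this is intrinsic, i.e.\ independent of the coordinate $u$. Intrinsically, $M_a$ is multiplication by the truncated jet $j^3_{\pr_2}(a)$ in the algebra $\OO_Y/\mathscr I_\Delta^4$. It therefore defines an $\OO$-linear endomorphism $\mu_a$ of $\JJ=\pi_{1*}(\pi_2^*\OO(6,0)\otimes\OO_Y/\mathscr I_\Delta^4)$. This endomorphism preserves the filtration $\mathcal F^\bullet$ and acts by $a$ on each graded piece, so $\mu_a$ is a bundle automorphism. Moreover $j^3(aF_i)=\mu_a(j^3F_i)$, which gives $\widetilde{\widehat\Phi}_0=\mu_a\circ\widetilde\Phi_0$ on $\OO^{\oplus5}$.

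The third step handles the source side. If $G$ is built from the conjugate lift, it changes by $\bar a$ only after complexification, but the pairing relation is preserved: $\sum G_i\,j^3(aF_i)=\mu_a\bigl(\sum G_i\,j^3F_i\bigr)=0$. So the descended map satisfies $\widehat\Phi_0=\mu_a\circ\Phi_0$ with the same $\AG$. If one also rescales $G$ by a unit, this only composes with an automorphism of $\AG$ or $\mathcal A$. By Hartogs uniqueness, the same relation holds for the extension: $\widehat\Phi=\mu_a\circ\Phi$.

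The consequences then follow formally. The cokernels $\TT$ and $\widehat\TT$ are identified by the isomorphism induced by $\mu_a$. This isomorphism is $\OO$-linear, so it commutes with multiplication by $t$ and respects the filtrations $t^j\TT$. It therefore induces isomorphisms of the layers $\EE_j$ and preserves their degrees. The ranks of $\Phi$ at points of $\Gamma$ are unchanged. Saturated images are carried to saturated images by $\mu_a|_\Gamma$, which is an automorphism of $\JJ|_\Gamma$, so their degrees are unchanged.

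The only point needing care is the second step, namely that the Leibniz matrix is a globally well-defined automorphism rather than one depending on the coordinate. The interpretation as multiplication in $\OO_Y/\mathscr I^4_\Delta$ settles this.
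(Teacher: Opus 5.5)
Your proposal is correct and follows the paper's own argument. The paper likewise uses the Leibniz rule to get a lower-triangular matrix with diagonal $a$, viewed as a change of frame on the relative principal-parts bundle $\JJ$. It then notes that the relation $F^tG=0$ on $\II$ is only multiplied by $a$, so the domain $\AG$ (and its hull $\mathcal A$) is unchanged and the cokernels are isomorphic compatibly with multiplication by $t$. Your coordinate-free description of the Leibniz matrix as multiplication by $\pi_2^*a$ in $\OO_Y/\mathscr I_\Delta^4$, and your explicit Hartogs-uniqueness step giving $\widehat\Phi=\mu_a\circ\Phi$, just spell out what the paper asserts in a single line.
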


\begin{proof}
Leibniz' rule gives
\[
\begin{pmatrix}
 \widehat F\\ D_v\widehat F\\ D_v^2\widehat F\\ D_v^3\widehat F
\end{pmatrix}
=
\begin{pmatrix}
 a&0&0&0\\
 D_va&a&0&0\\
 D_v^2a&2D_va&a&0\\
 D_v^3a&3D_v^2a&3D_va&a
\end{pmatrix}
\begin{pmatrix}
 F\\ D_vF\\ D_v^2F\\ D_v^3F
\end{pmatrix}.
\]
The displayed lower-triangular matrix is invertible because $a$ is a unit;
it is precisely the change of local frame induced on the bundle of
relative principal parts.  Moreover the relation $F^tG=0$ on $\II$ is
multiplied by the same scalar $a(Z)$, so the domain sheaf $\mathcal A$ and
the quotient relation are unchanged.  Hence the two jet maps have
isomorphic cokernels, compatibly with multiplication by $t$.
\end{proof}

\begin{lemma}\label{lem:propagation}
Assume that the special-fiber jet map has generic corank one along $C$.
Let
\[
 \LL_j=\EE_j/\tors(\EE_j).
\]
Then $\LL_j$ is a line bundle for $0\le j<m$, and multiplication by $t^j$
induces a nonzero morphism
\[
 \LL_0\longrightarrow\LL_j.
\]
Consequently, for any $0\le j<m$ we have $ \deg\LL_j\ge\deg\LL_0$ and
\begin{equation}\label{eq:propagation-bound}
 \delta_C\ge m\deg\LL_0.
\end{equation}
\end{lemma}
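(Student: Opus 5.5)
We work locally near $\Gamma$ in a transverse discrete valuation ring.  Over the generic point $\eta$ of $\Gamma$, let $R:=\OO_{\II,\eta}$ restricted to the transverse direction; $t$ is a uniformizer of the DVR $\OO_{\II,D}$ (the local ring at the generic point of $D=\pr_1^{-1}(C)$).  The plan is to use Smith normal form for $\Phi$ over this DVR, together with the corank-one hypothesis.

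\emph{Step 1: shape of $\TT$.}  Since the special-fiber jet map $\Phi|_D$ has generic corank one, the cokernel of $\Phi\otimes\OO_{\II,D}$ has exactly one invariant factor.  It is therefore cyclic, isomorphic to $\OO_{\II,D}/(t^{m})$, because its length equals $\ord_t\det\Phi=m$.  Hence at the generic point of $D$ each quotient $t^j\TT/t^{j+1}\TT$ has rank exactly one for $0\le j<m$.  This matches $\sum_j\rank\EE_j=m$.  By the flatness choice of $z$, the same holds on $\Gamma$, so $\LL_j=\EE_j/\tors(\EE_j)$ is a torsion-free rank-one sheaf on $\PP^1$, i.e.\ a line bundle.

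\emph{Step 2: the multiplication maps.}  Multiplication by $t^j$ gives a surjection of sheaves near $\Gamma$,
\[
\TT/t\TT\longrightarrow t^j\TT/t^{j+1}\TT.
\]
Restricting to $\Gamma$ keeps it surjective, since restriction is right exact.  Composing with the quotients by torsion, we obtain $\EE_0\to\EE_j\to\LL_j$.  Any torsion of $\EE_0$ maps to zero in the torsion-free $\LL_j$, so the map factors through $\LL_0\to\LL_j$.  This map is nonzero: at the generic point of $\Gamma$ it is the surjection $\OO/(t)\to t^j\OO/t^{j+1}\OO$ coming from the cyclic structure of Step 1, which is an isomorphism of rank-one spaces.

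\emph{Step 3: degrees.}  A nonzero morphism of line bundles on $\PP^1$ is injective with cokernel of finite length, so $\deg\LL_j\ge\deg\LL_0$.  With the additive degree convention, $\deg\EE_j=\deg\LL_j+\operatorname{length}\tors(\EE_j)\ge\deg\LL_j$.  Summing over $0\le j<m$ gives
\[
\delta_C\ge\sum_j\deg\LL_j\ge m\deg\LL_0,
\]
which is \eqref{eq:propagation-bound}.

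The main point needing care is Step 1.  We must verify that corank one of the special-fiber map generically along $C$ really forces a single Smith invariant factor $t^m$, rather than, say, some $t^{a}$ with $a<m$ plus other factors.  This holds because the number of nonunit invariant factors equals the corank of $\Phi$ modulo $t$.  We must also check that the generic-point computation transfers to $\Gamma$ via the generic-flatness choice of $z$.
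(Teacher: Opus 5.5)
Your proof is correct and follows essentially the paper's argument. Smith normal form over a transverse DVR, combined with corank one modulo $t$ and determinant valuation $m$, makes the cokernel cyclic $\cong\OO/(t^m)$; hence each layer has generic rank one, and multiplication by $t^j$ induces a nonzero map $\LL_0\to\LL_j$ of line bundles on $\PP^1$.

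The only difference is where the DVR lives. You work over $\OO_{\II,D}$ at the generic point of $D$ and transfer ranks to a general fiber $\Gamma$. The paper instead restricts to a transverse ruled surface and completes to $K[[t]]$ with $K=\CC(\Gamma)$. Incidentally, your right-exactness observation shows $\LL_0\to\LL_j$ is even surjective, hence an isomorphism.
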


\begin{proof}
Let $K=\CC(\Gamma)$ be the function field of $\Gamma$, and let $\eta$ denote the generic point of $\Gamma$.  Choose a smooth curve in the source through $z$
transverse to $C$.  Since $\pr_1$ is a $\PP^1$-bundle, after restricting to the
corresponding ruled surface and completing in the transverse parameter,
the jet map at the generic point of $\Gamma$ is represented by a square
matrix over the DVR $K[[t]]$.  Completion is faithfully
flat, so it does not change the determinant valuation or the generic
ranks of the layers.

The reduction modulo $t$ has corank one and the determinant has valuation
$m$.  Smith normal form therefore gives
\[
 \diag(1,1,1,t^m).
\]
Hence the completed cokernel of the restricted jet map at $\eta$ is
isomorphic to $K[[t]]/(t^m)$, and the generic rank of every $\EE_j$,
$0\le j<m$, is one.  The map
\[
 \EE_0\longrightarrow\EE_j,\qquad [x]\longmapsto[t^jx],
\]
is nonzero on the generic fiber.  It sends point torsion to point torsion,
so it induces a nonzero map of the torsion-free quotients
$\LL_0\to\LL_j$.  A nonzero morphism of line bundles on $\PP^1$ has an
effective zero divisor; hence $\deg\LL_j\ge\deg\LL_0$.  Summing and adding
the nonnegative torsion lengths proves \eqref{eq:propagation-bound}.
\end{proof}

At the fixed point $z$, Taylor expansion in the tangent direction gives a
(noncanonical but convenient) splitting
\begin{equation}\label{eq:J-on-Gamma}
 \JJ|_\Gamma\simeq
 \OO\oplus\OO(1)\oplus\OO(2)\oplus\OO(3),
 \qquad \deg\JJ|_\Gamma=6.
\end{equation}
Indeed, the $j$th directional derivative is a binary form of degree $j$ in
$v=(s,u)$.  Also $c_1(\mathcal A)=6k$, so
\begin{equation}\label{eq:A-degree-Gamma}
 \deg\mathcal A|_\Gamma=6.
\end{equation}

\subsection{The corank-one case}\label{subsec:corank-one}
Assume throughout this subsection that the special-fiber jet map
\(\Phi|_{t=0}\) has generic corank one along \(C\); equivalently, its
generic rank on \(\Gamma\simeq\PP^1\) is three.  By
\cref{lem:propagation}, it is enough to prove
\[
 \deg\LL_0\ge3.
\]
Here and below, the \emph{projective differential} means the differential
of the projective map \([F]\) on the locus where \(F\neq0\).  We distinguish
the three possible generic ranks of this differential along \(C\).

We first record a rigidity observation that will also be used in the
higher-corank analysis.

\begin{lemma}\label{lem:no-contracted-curve}
There is no irreducible curve \(B\subset\PP^2\) on which the projective map
\([F]\) is constant.
\end{lemma}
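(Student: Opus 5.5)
We argue by contradiction. Suppose \([F]\) is constant, equal to a point \(p=[P]\in\PP^4\), along an irreducible curve \(B\subset\PP^2\). The plan is to play this against the boundary behaviour forced by properness and then against the reducedness of \(F\), by cases on the position of \(B\) relative to the closed ball.

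\emph{Step 1: \(B\) cannot enter the ball.} The complex curve \(B\) cannot meet \(\B^2\). If it did, \(B\cap\B^2\) would be a positive-dimensional analytic subset of \(\B^2\) contained in the fiber \(f^{-1}(p)\). But a proper holomorphic map from \(\B^2\) has compact fibers, and compact analytic subsets of the Stein domain \(\B^2\) are finite. This is the same argument used for \(r=1\) in \cref{lem:linear-full}.

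\emph{Step 2: \(B\) cannot meet the sphere.} Next we rule out \(B\cap\partial\B^2\neq\emptyset\). By \cref{lem:no-base} and the Cima--Suffridge regularity, \([F]\) is holomorphic near \(\overline{\B^2}\). It maps \(\partial\B^2\) into \(\partial\B^4\) and is a local CR immersion there, since Hopf-lemma transversality of proper maps gives nonvanishing normal derivative. An algebraic curve through a boundary point, if it does not enter the ball, must be tangent to the sphere there, so it lies in the complexified tangent line. Pulling back the defining function \(\rho\) of \(\partial\B^4\) gives \(\rho\circ f=\phi\cdot\rho_{\B^2}\) with \(\phi>0\) near the sphere (again by Hopf). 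Restricted to \(B\), \(\rho\circ f\) is constant while \(\rho_{\B^2}|_B\) is not constant near the point, unless \(B\) is contained in a sphere. Algebraic curves are not contained in the sphere, which is strictly pseudoconvex and has no complex curves, so we get a contradiction. Hence \(B\cap\overline{\B^2}=\emptyset\).

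\emph{Step 3: \(B\) outside the closed ball.} This is the main obstacle. First use the polarized identity \eqref{eq:polarized-identity}. For \(Z\in B\) we have \(F(Z)=a(Z)P\), so \(P^tG(W)\,a(Z)=(Z,W)_{1,2}R(Z,W)\). For \(W\) on the curve \(B^\dagger\) (the image of \(B\) under \(W=\overline Z\)), symmetry gives \(G(W)=\overline{a}\,J_{1,4}\overline P\). Evaluating at \(Z\in B\), \(W\in B^\dagger\) then gives
\[
 \langle P,P\rangle_{1,4}\,a(Z)\overline{a(W')}=(Z,W)_{1,2}R(Z,W).
\]
Since \((Z,W)_{1,2}\) is not identically zero on \(B\times B^\dagger\), the function \(R\) does not vanish identically there, so the incidence relation would have to be encoded by \(R\). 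The plan is to use the Segre-family argument instead. For each \(Z\in B\), the line \(L_{\overline Z}\) is mapped by \(F\) into the polar hyperplane \(P^\perp\), because \(F(Z')^tG(\overline Z)=0\) for \(Z'\in L_{\overline Z}\). As \(Z\) runs over \(B\), these lines sweep out \(\PP^2\) unless \(B\) is a line. If they sweep out \(\PP^2\), the image of \(F\) lies in \(P^\perp\), contradicting linear fullness (\cref{lem:linear-full}). If \(B\) is a line, its dual point \(W_0\) has \(L_{W_0}=B\), and all lines \(L_{\overline Z}\) pass through the fixed point dual to \(B\). In that case I would restrict \(F\) to the pencil and count degrees: the condition \(F|_{L}\subset P^\perp\) on a one-parameter family of lines through a point forces the linear form \(P^tJ_{1,4}F\) to vanish on a two-dimensional set, so it vanishes identically, again contradicting \cref{lem:linear-full}.

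The delicate points are verifying that the swept lines really are all the Segre lines \(L_{\overline Z}\), which needs the symmetry \(G=J\overline F\) to transfer constancy from \(B\) to \(B^\dagger\), and handling points where \(F\) vanishes on \(B\). For the latter I would use reducedness: \(F\) can have only finitely many common zeros, so generic points of \(B\) suffice.
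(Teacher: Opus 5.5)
Your proof is correct, but most of it is unnecessary. The Segre-family argument in your Step 3 is the paper's proof read through the conjugation $G(\overline Z)=J_{1,4}\overline{F(Z)}$, and it never uses where $B$ sits relative to $\overline{\B^2}$.

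The paper fixes a general $W$ and observes that the line $L_W$ meets $B$ (B\'ezout) at a point where $F=\chi_B a$ with $\chi_B\neq0$. It then reads off $a^tG(W)=0$ from \eqref{eq:FG-zero-on-I}. Hence $a^tG\equiv0$, a linear relation contradicting \cref{lem:linear-full}.

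Your version fixes a general $Z\in B$ and uses $G(\overline Z)\propto J_{1,4}\overline P$ instead; the two are equivalent. In both, the only geometric input is that a general point of $\PP^2$ lies on the polar line of some general point of $B$. This is why your split into ``$B$ a line'' versus ``$B$ not a line'' is superfluous: a pencil of lines already covers $\PP^2$. Your remark that the lines sweep out $\PP^2$ ``unless $B$ is a line'' is inaccurate, though harmless, since you then reach the right conclusion in that case anyway.

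So Steps 1 and 2, with their appeal to compact fibres, Cima--Suffridge regularity and the Hopf lemma, can be deleted. They are correct, apart from the slip that a nonlinear curve tangent to the sphere is only tangent to, not contained in, the complex tangent line. But they buy nothing. The paper's route shows that the lemma is purely algebraic, needing only the polarized identity and linear fullness.

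Two further small points. The first half of Step 3, involving $R$, leads nowhere and should be dropped. The linear form that vanishes identically is $\overline P^{\,t}J_{1,4}F=\langle F,P\rangle_{1,4}$, not $P^tJ_{1,4}F$.
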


\begin{proof}
Suppose that \([F]\) is constant on a dense open subset of \(B\).  Then
\[
 F(Z)=\chi_B(Z)a
\]
there, for a fixed nonzero vector \(a\in\CC^5\) and a scalar function
\(\chi_B=\chi_B(Z)\) on that dense open subset of \(B\).  If \(Z\in B\) and \((Z,W)_{1,2}=0\), the polarized identity
\eqref{eq:polarized-identity} gives \(a^tG(W)=0\) whenever
\(\chi_B(Z)\neq0\).  For general \(W\), the polar line \(L_W\) meets the
dense open subset of \(B\) on which \(\chi_B\neq0\).  Hence
\(a^tG(W)=0\) for general \(W\), and therefore identically.  This gives a
nontrivial linear relation among the entries of \(G\), and hence among
those of \(F\), contradicting \cref{lem:linear-full}.
\end{proof}

\subsubsection{Projective differential of rank two}

Let \(z\in C\) be general.  Choose affine source coordinates centered at
\(z\), and write a tangent direction as \(v=(s,u)\in T_z\PP^2\).  Modulo
the value line \(\CC F(z)\), choose target vectors \(e_1,e_2\in\CC^5\)
whose classes form a basis of the image of the projective differential.
Thus
\[
 D_vF\equiv se_1+ue_2\pmod{\CC F}.
\]
Extend \(F,e_1,e_2\) to a constant target basis.  Modulo \(F\), write
\[
 D_v^2F=(A_2,B_2,Q_{2,1},Q_{2,2}),\qquad
 D_v^3F=(A_3,B_3,Q_{3,1},Q_{3,2}),
\]
where \(A_2,B_2,Q_{2,1},Q_{2,2}\) are binary quadratic forms and
\(A_3,B_3,Q_{3,1},Q_{3,2}\) are binary cubic forms in the direction
variable \(v=(s,u)\).

After splitting off the value direction and eliminating the two
independent first-jet directions, the remaining block of the special jet
map is
\begin{equation}\label{eq:residual-rank2}
\begin{pmatrix}
-uA_2+sB_2 & Q_{2,1} & Q_{2,2}\\
-uA_3+sB_3 & Q_{3,1} & Q_{3,2}
\end{pmatrix}\quad:\quad\OO(-1)\oplus\OO^{\oplus2}\longrightarrow \OO(2)\oplus\OO(3).
\end{equation}
Indeed, after removing the value row, the first-jet row is \((s,u)\); its
kernel is generated by the Koszul relation \((-u,s)\).  Applying this
relation to the second- and third-jet rows gives exactly
\eqref{eq:residual-rank2}.

Since the special jet map has generic rank three, the residual map
\eqref{eq:residual-rank2} has generic rank one.  Let $M\subset\OO(2)\oplus\OO(3)$ denote the saturated line bundle of its image. The torsion-free quotient of the special
cokernel therefore satisfies
\[
 \deg\LL_0
 \ge \deg\bigl(\OO(2)\oplus\OO(3)\bigr)-\deg M
 =5-\deg M.
\]
Thus \(\deg M\le2\) already gives \(\deg\LL_0\ge3\).  It remains to rule
out the extremal possibility
\[
 M\simeq\OO(3).
\]

Assume \(M\simeq\OO(3)\).  A nonzero morphism
\(\OO(3)\to\OO(2)\) does not exist, so the \(\OO(2)\)-projection of \(M\)
vanishes.  Consequently
\[
 Q_{2,1}=Q_{2,2}=0,\qquad -uA_2+sB_2=0.
\]
Hence there is a linear form \(\varrho_0=\varrho_0(s,u)\) such that
\[
 A_2=s\varrho_0,\qquad B_2=u\varrho_0.
\]
With \(\varrho=\varrho_0/2\), polarization gives
\begin{equation}\label{eq:pure-trace}
D_vD_wF
\equiv
\varrho(v)D_wF+\varrho(w)D_vF
\pmod{\CC F},
\end{equation}
for \(v,w\in T_z\PP^2\).  After shrinking to a smooth open subset of
\(C\), the coefficients vary holomorphically with \(z\), so that
\(\varrho\) may be regarded as a holomorphic linear form on
\(T_{\PP^2}|_C\).

\begin{lemma}[Pure-trace reduction]\label{lem:pure-trace-reduction}
After shrinking around a general point of \(C\), there is a nowhere-zero
holomorphic function \(a\) such that \(\widehat F=aF\) satisfies
\begin{equation}\label{eq:hessian-scalar}
 D_XD_Y\widehat F\in\CC\widehat F
 \qquad\text{on }C
\end{equation}
for all local vector fields \(X,Y\).  Consequently the subspaces
\[
 V_z:=\Span\{\widehat F(z),d\widehat F_z(T_z\PP^2)\}
\]
are constant along \(C\).  If \(\tau\) is a nonzero tangent field along
\(C\), then
\[
 U_z:=\Span\{\widehat F(z),D_\tau\widehat F(z)\}
\]
is also constant on a dense open subset of \(C\).  Writing the resulting
fixed spaces as \(U\subset V\subset\CC^5\), and writing \(c=0\) for the
irreducible equation of \(C\), one has
\begin{equation}\label{eq:c3-div}
 c^3\mid n(F)
 \qquad\text{for every }n\in\operatorname{Ann}(V),
\end{equation}
where $\operatorname{Ann}(V):=\{n\in(\CC^5)^\vee:n|_V=0\}.$ In particular, \(\deg C=1\).
\end{lemma}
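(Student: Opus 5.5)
The plan is to normalize away the trace form $\varrho$ by a gauge change, then read off the three consequences from the Leibniz rule and from linear fullness. First I construct $a$. By \eqref{eq:pure-trace}, after shrinking to a smooth open piece of $C$, $\varrho$ is a holomorphic section of $T^\vee_{\PP^2}|_C$. I would choose a holomorphic function $g$ near $z$ with $dg|_C=-\varrho$ as forms on $T_{\PP^2}|_C$. This is possible locally: in coordinates $(x,t)$ with $C=(t=0)$, pick $g_0(x)$ with $g_0'=-\varrho(\partial_x)$ and set $g=g_0(x)-t\,\varrho(\partial_t)(x)$. Then I put $a=e^{g}$, so that $D_Xa=-a\,\varrho(X)$ on $C$.

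Next I expand with Leibniz:
\[
D_XD_Y(aF)=aD_XD_YF+(D_Xa)D_YF+(D_Ya)D_XF+(D_XD_Ya)F .
\]
Modulo $\CC F=\CC\widehat F$, the last term drops. On $C$ the first term is $a(\varrho(X)D_YF+\varrho(Y)D_XF)$ by polarized \eqref{eq:pure-trace}, and the middle terms cancel it. This gives \eqref{eq:hessian-scalar}. I should also note that \eqref{eq:pure-trace} is stated for constant directions. For general vector fields $X,Y$, the extra term $D_{D_XY}\widehat F$ is a first derivative, so it only lies in $V_z$. This is harmless for what follows, because below I only use \eqref{eq:hessian-scalar} modulo $V_z$, or with constant fields.

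For the constancy of $V_z$, I take a local frame $\widehat F,\,D_{\partial_1}\widehat F,\,D_{\partial_2}\widehat F$ with constant coordinate fields; it spans $V_z$, which has dimension three since the projective differential has rank two. Differentiating along $\tau$ keeps each element in $V_z$, by \eqref{eq:hessian-scalar}. Hence the frame satisfies a linear ODE $\dot e=Ae$ along $C$, and the span of a solution of such a system is constant. For $U_z$, I first need $\dim U_z=2$ generically: $D_\tau\widehat F\notin\CC\widehat F$ on a dense open subset, since otherwise $[F]$ would be constant on $C$, contradicting \cref{lem:no-contracted-curve}. Then $D_\tau\widehat F\in U_z$ and $D_\tau D_\tau\widehat F\in\CC\widehat F+\CC D_\tau\widehat F$; the second term appears because $\tau$ is a field, and the $D_{D_\tau\tau}\widehat F$ term lies in $\CC D_\tau\widehat F$ after choosing $\tau$ as a tangent coordinate field along $C$. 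So the same ODE argument gives a constant $U$.

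For \eqref{eq:c3-div}, fix $n\in\operatorname{Ann}(V)$. Then $n(\widehat F)$, all its first derivatives $n(D_X\widehat F)$, and all its second derivatives $n(D_XD_Y\widehat F)\in n(\CC\widehat F)=0$ vanish along $C$. Hence $n(\widehat F)$ vanishes to order at least three along $C$, and since $a$ is a unit, $c^3\mid n(F)$ as polynomials. Finally, for $\deg C=1$: $\operatorname{Ann}(V)$ has dimension two, and each $n(F)$ is a sextic divisible by $c^3$, of degree $3\deg C$. If $\deg C\ge3$, every $n(F)$ vanishes, which is a linear relation among the $F_i$. If $\deg C=2$, then $n_i(F)=\alpha_ic^3$ for a basis $n_1,n_2$, and $(\alpha_2n_1-\alpha_1n_2)(F)=0$ is again a nontrivial relation. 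Both contradict \cref{lem:linear-full}, so $\deg C=1$.

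The main obstacle I expect is the careful bookkeeping in the two constancy arguments: ensuring that every derivative of a frame element along $C$ stays inside the span, with the connection terms from non-constant fields controlled, and that the genericity (rank two, $\dim U_z=2$) holds on a common dense open subset.
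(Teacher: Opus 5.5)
Your route is the paper's: gauge by $a=e^{g}$ with $dg|_C=-\varrho$ (your explicit $g=g_0(x)-t\,\varrho(\partial_t)$ is a fine construction of the paper's $\psi$), then the Leibniz cancellation. The constancy of $V_z$ is obtained by differentiating $\widehat F,\partial_1\widehat F,\partial_2\widehat F$ along $C$; this is correct because the fields are constant, so no connection terms arise. Then come order-three vanishing of $n(\widehat F)$ for $n\in\operatorname{Ann}(V)$ and the degree count against \cref{lem:linear-full}.

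One step fails as written: the constancy of $U_z$. You claim that $D_{D_\tau\tau}\widehat F\in\CC D_\tau\widehat F$ once $\tau$ is a tangent coordinate field along $C$. Along $C$ the flat derivative $D_\tau\tau$ depends only on $\tau|_C$. If $\gamma$ parametrizes $C$ in the affine chart and $\tau=f\gamma'$, then $D_\tau\tau=ff'\gamma'+f^2\gamma''$. This is tangent to $C$ only where $\gamma'\wedge\gamma''=0$, that is, only if $C$ is a line. For a curved $C$, $d\widehat F(D_\tau\tau)$ contains a nonzero multiple of $d\widehat F(\nu)$ for a normal direction $\nu$. That vector is not in $U_z$, since $\dim V_z=3$. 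So the derivative of $D_\tau\widehat F$ along $C$ leaves $U_z$, and the ODE argument does not apply. Your remark that non-constant fields are harmless covers statements modulo $V_z$, but this step needs a statement modulo $U_z$.

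The repair is to reorder. The divisibility $c^3\mid n(F)$ and the conclusion $\deg C=1$ use only the constancy of $V$, computed with constant fields, so prove them first. On the line $C$, take $\tau$ to be a constant direction in an affine chart. Then $D_\tau\tau=0$ and $D_\tau D_\tau\widehat F=d^2\widehat F(\tau,\tau)\in\CC\widehat F$, so the frame $\widehat F,D_\tau\widehat F$ satisfies a linear ODE along $C$ and $U$ is constant. The paper's proof also applies \eqref{eq:hessian-scalar} with $X=Y=\tau$ before knowing $\deg C=1$, so it relies on the same point implicitly; the same reordering fixes it there.

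A smaller point: the paper reuses this lemma in the rank-one case, where $\dim V=2$. There your three-element frame is dependent, so phrase the argument with a local basis of $V_z$ and use only $\dim\operatorname{Ann}(V)\ge2$, as the paper does.
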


\begin{proof}
Choose local coordinates \((\xi,t)\) with \(C=(t=0)\).  Note that on this coordinate, there exists a local holomorphic function $\psi$ such that
\[
 D_X\psi=-\varrho(X)\qquad\text{on }C
\]
for every local vector field \(X\). Set \(a=e^\psi\) and
\(\widehat F=aF\).  By Leibniz' rule and \eqref{eq:pure-trace},
\[
\begin{aligned}
D_XD_Y\widehat F
&=aD_XD_YF+(D_Xa)D_YF+(D_Ya)D_XF+(D_XD_Ya)F\\
&\equiv
\bigl(a\varrho(X)+D_Xa\bigr)D_YF
+\bigl(a\varrho(Y)+D_Ya\bigr)D_XF
\pmod{\CC F}.
\end{aligned}
\]
Since \(D_Xa/a=D_X\psi=-\varrho(X)\) on \(C\), the two first-derivative
terms cancel and \eqref{eq:hessian-scalar} follows.  By
\cref{lem:gauge-invariance}, this unit gauge does not change any of the
jet-cokernel data.

Let \(\tau\) be tangent to \(C\).  Differentiating the generators
\(\widehat F\) and \(D_X\widehat F\) in the \(\tau\)-direction produces
vectors that remain in \(V_z\) by \eqref{eq:hessian-scalar}.  Hence \(V_z\) is constant.  Since \([F]|_C\) is nonconstant by
\cref{lem:no-contracted-curve}, \(U_z\) has dimension two at a general
point.  Applying \eqref{eq:hessian-scalar} with \(X=Y=\tau\) shows in the
same way that \(U_z\) is constant.

If \(n\in\operatorname{Ann}(V)\), then \(n(\widehat F)\) and all of its derivatives of
order at most two vanish along \(C\).  Since \(a\) is a unit, the same
order of vanishing holds for \(n(F)\), proving \eqref{eq:c3-div}.

Finally, \(\dim V\le3\), hence \(\dim\operatorname{Ann}(V)\ge2\).  If
\(\deg C\ge3\), then \(\deg c^3>6\), so \eqref{eq:c3-div} forces
\(n(F)\equiv0\) for every \(n\in\operatorname{Ann}(V)\), contradicting linear fullness.
If \(\deg C=2\), each \(n(F)\) is a scalar multiple of the sextic \(c^3\).
Two independent functionals in \(\operatorname{Ann}(V)\) therefore have a nontrivial
linear combination that annihilates \(F\) identically, again contradicting
linear fullness.  Thus \(\deg C=1\).
\end{proof}

\begin{lemma}[Hermitian negativity of \(C\)]\label{lem:C-negative}
In the exceptional case \(M\simeq\OO(3)\), the line
\(C\subset\PP^2\) is Hermitian negative; that is, the source Hermitian form $\langle\ ,\ \rangle_{1,2}$
is negative definite on the two-dimensional vector subspace whose
projectivization is \(C\).
\end{lemma}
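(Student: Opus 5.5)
We argue by excluding the other two Hermitian types of the line \(C\). By \cref{lem:pure-trace-reduction}, \(C=\PP(\Sigma)\) for a two-dimensional subspace \(\Sigma\subset\CC^3\). The restriction of \(\langle\ ,\ \rangle_{1,2}\) to \(\Sigma\) is therefore one of three kinds: it has signature \((1,1)\), so \(C\) meets \(\B^2\); it is degenerate with a one-dimensional kernel, so \(C\) is tangent to \(\partial\B^2\) at one point; or it is negative definite. We rule out the first two. Throughout we use the gauged lift \(\widehat F=aF\) and the fixed spaces \(U\subset V\subset\CC^5\) with \(\dim U=2\) and \(\dim V\le3\), together with the divisibility \(c^3\mid n(F)\) for \(n\in\operatorname{Ann}(V)\) from \eqref{eq:c3-div}.

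\emph{Case 1: \(C\) meets the ball.} Since \(D_\tau D_\tau\widehat F\in\CC\widehat F\) along \(C\), the curve \(z\mapsto[\widehat F(z)]\) satisfies a second-order linear ODE whose solutions lie in the fixed plane \(U\). Hence \([F]|_C\) maps into the projective line \(\PP(U)\), and it is nonconstant by \cref{lem:no-contracted-curve}. The disc \(C\cap\B^2\) maps properly into \(\B^4\cap\PP(U)\): its boundary circle \(C\cap\partial\B^2\) lands in \(\partial\B^4\) by properness and by \cref{lem:no-base}. So \(U\) has signature \((1,1)\), and \([F]|_{C}\) is a proper rational self-map of the disc, i.e. a finite Blaschke product. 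We then use \(V\). Take a second line \(L_W\) through a general \(z\in C\), where \(W\in\Gamma\). The polarized identity \eqref{eq:polarized-identity} together with \(\operatorname{Ann}(V)\) vanishing to order three should force \(G(W)\), for \(W\) in the polar region of \(C\), to annihilate the fixed space \(V\) in a way incompatible with the Hermitian signatures. Concretely, we plan to complexify: for \(Z\in C\) and \(W\in C^\perp\)-related pairs, \(F(Z)^tG(W)=0\) with \(F(Z)\in U\). Since the points \(W\) dual to lines through points of \(C\) sweep all of \(\PP^2_W\), the value \(G(W)\) is constrained to \(U^{\perp}\)-type conditions along a two-dimensional family. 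Combined with the inclusion \(V\supset U\) and with linear fullness (\cref{lem:linear-full}), this should give a nontrivial linear relation among the \(G_i\), which is a contradiction. Here I am unsure of the exact mechanism. The fallback is to compare the local second-order Hermitian data: because \(\partial\B^2\) is strongly pseudoconvex at points of \(C\cap\partial\B^2\), and \(\widehat F\) has scalar Hessian along \(C\), the map fails to be CR-nondegenerate there. This contradicts the Hopf-lemma transversality \(\partial_\nu\langle F,F\rangle\ne0\), which holds for proper maps extending across the sphere.

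\emph{Case 2: \(C\) is tangent to the sphere at \(p\).} Here \(C\cap\overline{\B^2}=\{p\}\), and \([F](p)\in\partial\B^4\) by \cref{lem:no-base}. The Hessian-scalar condition at \(p\) means that the second fundamental form of the image vanishes along \(C\) at \(p\). Complexifying the boundary identity to second order in the complex tangent direction at \(p\), and using that \(C\) is the complex tangent line, the Levi form of \(\langle F,F\rangle_{1,4}\) along \(T^{1,0}_p\partial\B^2\) must equal a positive multiple of the source Levi form. The scalar-Hessian relation \eqref{eq:hessian-scalar}, together with the fact that \(V\) is constant along \(C\), then should force the restriction of the target form to \(V\) to be degenerate in a way that contradicts this positivity.

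The main obstacle is Case 1, specifically converting the constancy of \(U\) and \(V\) plus the order-three divisibility into a contradiction. I would first try the Hopf-lemma/CR-transversality route at a point of \(C\cap\partial\B^2\), since it is local and uses only \eqref{eq:hessian-scalar}.
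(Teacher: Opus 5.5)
Your trichotomy for the two-plane and your last step (no null vector and positive index one force negative definiteness) agree with the paper. However, all of the substance lies in excluding secant and tangent lines, and your proposal does not carry out either exclusion.

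\textbf{Case~1.} You say yourself that the mechanism is unclear, and the sketched one does not work. For general $W$ the polar line $L_W$ meets $C$ in a single point $Z$. So $F(Z)^tG(W)=0$ only says that $G(W)$ annihilates the single vector $Y(Z)\in U$. This is just the polarity relation $[Y]\circ\iota_C=\iota_U\circ[Y]$, and it gives no linear relation among the $G_i$.

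\textbf{The local fallback and Case~2.} These fail for a more basic reason. The scalar-Hessian condition \eqref{eq:hessian-scalar} is second order, while Hopf transversality and the Levi identity are first-order statements, and the two are compatible. At a boundary point of $C$ the latter give $\rank d[F]_p=2$ together with $\langle F,F\rangle_{1,4}=\langle F,D_LF\rangle_{1,4}=0$, $\langle D_LF,D_LF\rangle_{1,4}<0$ and $\langle F,D_NF\rangle_{1,4}\ne0$. These make $V$ \emph{nondegenerate} of signature $(1,2)$, which is the opposite of the degeneracy you hope to reach in Case~2. Concretely, the standard embedding $Z\mapsto(Z,0,0)$ satisfies every local property you invoke along any secant or tangent line:
\begin{itemize}
\item $D_XD_YF=0$, so the pure-trace identity holds with $\varrho=0$;
\item $U\subset V$ are constant along $C$;
\item $c^3\mid n(F)$ holds trivially;
\item the map is transversal at the sphere.
\end{itemize}
So no contradiction can come from these data alone. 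What fails for this example is only linear fullness and degree six, and these must enter the argument.

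\textbf{What the paper does.} It invokes \cref{prop:B-boundary-line-exclusion}.
\begin{itemize}
\item \emph{Tangent case.} One works in Siegel coordinates with the full complexified reflection identity. The pure-trace identity forces the restriction $s$ to be fractional linear, and a target automorphism makes $s(z)=z$. The reflection identity then gives $a=zA$, $g=wA$, a real constant $h$, and $L\cdot L^*=0$. Hence the cubic normal component vanishes and the image lies in $\PP(V)$, contradicting linear fullness.
\item \emph{Secant case.} Your Blaschke-product observation is correct but only the start. One also needs the half-Wronskian identity $\operatorname{Wr}(Y)=\gamma^{-1}\rho^2$ (all ramification even), reflection pairing, and Riemann--Hurwitz to get $\nu\in\{1,3,5\}$. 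Then comes the projection normal form $F=\widehat\chi Y+x\widetilde g\,e_\perp+x^3H$ and a case analysis. For $\nu=1,3$ the maximum principle and Alexander's theorem force $\deg f\le4$. For $\nu=5$ the separated-rank factorization lemma shows $\widehat\chi\mid H$, contradicting reducedness.
\end{itemize}
None of this global, degree-sensitive input appears in your proposal.
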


\begin{proof}
By \cref{lem:pure-trace-reduction}, $C$ is a line and the fixed spaces
$U\subset V$ satisfy \eqref{eq:pure-trace} and \eqref{eq:c3-div}.
If $p\in C\cap\partial\B^2$, then \cref{lem:no-base} gives $F(p)\ne0$.
Boundary transversality and the Levi identity give $\rank d([F])_p=2$:
the CR-tangent derivative is nonzero, and the transverse derivative
supplies an independent direction; see
\cite{CimaSuffridge1990,DAngeloBook}.
The tangent and secant possibilities are both excluded by
\cref{prop:B-boundary-line-exclusion}, proved in
\cref{app:boundary-line-exclusion} using these same pure-trace and
third-order divisibility data, together with reduced degree six.
Thus $C\cap\partial\B^2=\varnothing$.

Let $\Lambda_C\subset\CC^3$ be the two-plane with $C=\PP(\Lambda_C)$.
The source form has no nonzero null vector on $\Lambda_C$ and is therefore
definite there.  It cannot be positive definite because the ambient
positive index is one.  Hence it is negative definite.
\end{proof}

Choose source unitary coordinates
\begin{equation}\label{eq:negative-line-coords}
 Z=(x,z_1,z_2),\qquad W=(y,w_1,w_2),\qquad C=(x=0),
\end{equation}
such that
\begin{equation}\label{eq:sigma-def}
 (Z,W)_{1,2}=xy-\sigma(z,w),
 \qquad
 \sigma(z,w):=z_1w_1+z_2w_2.
\end{equation}
After factoring out the common divisor of the components of \(F|_C\), we may write
\begin{equation}\label{eq:F-on-C}
 F|_C=\chi\,Y,\qquad
 Y=(Y_1,Y_2)\in U,\qquad
 \gcd(Y_1,Y_2)=1,
\end{equation}
where \(\chi=\chi(z)\) is a homogeneous scalar binary form and \(Y=Y(z)\) is a homogeneous \(U\)-valued binary form in \(z=(z_1,z_2)\), with $\nu:=\deg Y$ and $\deg\chi=6-\nu.$

The map \([Y]:C\simeq\PP^1\to\PP(U)\) is nonconstant by
\cref{lem:no-contracted-curve}.  Restricting the complexified incidence
identity to \(C\) gives
\[
 (Y(z),Y^*(w))_{1,4}=0
 \qquad\text{whenever }\sigma(z,w)=0.
\]
If \(u\in C\) is the Hermitian polar point of \(z\), then
\(w=\overline u\) satisfies \(\sigma(z,w)=0\) and
\(Y^*(w)=\overline{Y(u)}\).  Hence the preceding complexified relation is
equivalent to
\[
 \langle Y(z),Y(u)\rangle_{1,4}=0.
\]
Thus the hypothesis of \cref{lem:U-nondegenerate} is satisfied, and by
\cref{lem:U-nondegenerate}(i) the Hermitian form $\langle\ ,\ \rangle_{1,4}$ on \(U\) is
nondegenerate.

Choose \(e_\perp\in V\cap U^\perp\) spanning \(V/U\), and let $ [\partial_xF]_{V/U}$ be $(\partial_xF\bmod U)\in V/U$ . Hence
\[
 [\partial_xF]_{V/U}=g_C\,e_\perp,
\]
where \(g_C=g_C(z)\) is a binary quintic on \(C\). Choose a local affine coordinate \(\xi\) on \(C\) near the general point under consideration, and set $\tau:=\frac{\partial}{\partial \xi}.$ For any local function \(\alpha\) on \(C\), we write $\alpha':=\frac{\partial \alpha}{\partial \xi}.$ With this convention, define the binary Wronskian of \(Y=(Y_1,Y_2)\) by
$$
\operatorname{Wr}(Y):=Y_1Y_2'-Y_2Y_1'.
$$

Taking $v=w=\tau$ or $v=\tau$ and $w$ to be the normal direction of $C$ in \eqref{eq:pure-trace} give the following two expressions
\[
 2\varrho(\tau)
 =2\frac{\chi'}{\chi}+\frac{\operatorname{Wr}(Y)'}{\operatorname{Wr}(Y)},
 \qquad
 \frac{g_C'}{g_C}=\varrho(\tau).
\]
Therefore
\begin{equation}\label{eq:half-Wronskian}
 g_C^2=\gamma\,\chi^2\operatorname{Wr}(Y)
\end{equation}
for some \(\gamma\in\CC^\times\).  Since the polynomial ring in two
variables is a UFD, \(\chi\mid g_C\).  Write
\[
 g_C=\chi\rho,\qquad \deg\rho=\nu-1,
\]
where \(\rho=\rho(z)\) is a binary form on \(C\).
Then
\begin{equation}\label{eq:Wronskian-square}
 \operatorname{Wr}(Y)=\gamma^{-1}\rho^2.
\end{equation}

Let $
C=\PP(\Lambda_C),$ where \(\Lambda_C\subset\CC^3\) is a negative-definite two-plane.  Define
$$
\iota_C([z])
:=\PP\!\left(\Lambda_C\cap z^{\perp_{1,2}}\right),
\qquad
z^{\perp_{1,2}}
=\{w\in\CC^3:\langle z,w\rangle_{1,2}=0\}.
$$
Similarly, since \(U\subset\CC^5\) is nondegenerate, define
$$
\iota_U([u])
:=\PP\!\left(U\cap u^{\perp_{1,4}}\right),
\qquad
u^{\perp_{1,4}}
=\{v\in\CC^5:\langle u,v\rangle_{1,4}=0\}.
$$
The incidence identity \eqref{eq:FG-zero-on-I} on \(C\), interpreted on
the Hermitian real form as above, gives
\begin{equation}\label{eq:polarity-Y}
 [Y]\circ\iota_C=\iota_U\circ[Y].
\end{equation}

\begin{lemma}[Degree alternative]\label{lem:degree-alternative}
With the notation above, $\nu\in\{1,3,5\}.$
\end{lemma}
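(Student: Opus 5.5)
The plan is to obtain the parity of $\nu$ from the polarity relation \eqref{eq:polarity-Y}, using topology. First, note the trivial bounds. Since $\gcd(Y_1,Y_2)=1$, the holomorphic map $[Y]:C\simeq\PP^1\to\PP(U)\simeq\PP^1$ has topological degree exactly $\nu$. It is nonconstant by \cref{lem:no-contracted-curve}, so $\nu\ge1$. Also $\deg\chi=6-\nu\ge0$, so $\nu\le6$. It therefore suffices to show that $\nu$ is odd. Equivalently, we must exclude $\nu\in\{2,4,6\}$.

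Next, identify the two involutions. By \cref{lem:C-negative}, $\Lambda_C$ is negative definite, so $\iota_C$ is the antipodal (fixed-point-free) antiholomorphic involution of $C\simeq S^2$. By \cref{lem:U-nondegenerate}(i), $U$ is nondegenerate, so it is either definite or of signature $(1,1)$. I will exclude the indefinite case as follows.

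Suppose $U$ has signature $(1,1)$. Then the null lines of $U$ form a real circle $S_U\subset\PP(U)$, which is the fixed set of $\iota_U$. If $[Y](z)\in S_U$ at a point $z\in C$ with $\chi(z)\ne0$, then $F(z)$ is a null vector of $\langle\ ,\ \rangle_{1,4}$. By \cref{lem:no-base}, $F$ has no zeros on the closed ball, and the map is proper and holomorphic across the sphere; together these force $z\in\partial\B^2$, or at least $z$ lies on the null cone of the source form. Arguing more carefully via \eqref{eq:polarity-Y}: $[Y](z)=\iota_U[Y](z)=[Y](\iota_C z)$, and the polarized identity pairs the points $z$ and $\iota_C z$. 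However, $C$ is negative definite, so it contains no source null points. I will use this to show that $[Y]$ misses $S_U$ on a dense open subset, and hence everywhere by closedness of the image and openness of nonconstant holomorphic maps. A nonconstant holomorphic self-map of $\PP^1$ is surjective, which is a contradiction. Thus $U$ is definite, and $\iota_U$ is also the antipodal map.

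Finally, \eqref{eq:polarity-Y} says that $[Y]:S^2\to S^2$ intertwines the two antipodal maps. By the Borsuk--Ulam theorem in its degree form, any continuous map $S^2\to S^2$ with $f(-x)=-f(x)$ has odd degree. Hence $\nu$ is odd, and $\nu\in\{1,3,5\}$.

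I expect the main obstacle to be making the indefinite-case exclusion rigorous. Specifically, one must show that a point of $C$ mapped to a $U$-null direction must be a source null point; this requires interpreting \eqref{eq:polarity-Y} at a fixed point of $\iota_U$ together with the Hermitian meaning of \eqref{eq:FG-zero-on-I}. If this route is delicate, a fallback is to use \eqref{eq:Wronskian-square}: all ramification indices of $[Y]$ are odd. One then combines this with the equivariance under the fixed-point-free $\iota_C$, which pairs ramification points and forces $\deg\rho=\nu-1$ to be even.
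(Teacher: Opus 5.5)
\textbf{The main route has a genuine gap.} The Borsuk--Ulam step needs $\iota_U$ to be fixed-point-free, that is, $U$ definite, and your exclusion of signature $(1,1)$ does not work.

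First, properness only gives $\langle F(Z),F(Z)\rangle_{1,4}=\langle Z,Z\rangle_{1,2}R(Z,\overline Z)$. So $F(z)$ can be null at a negative point of $C$ wherever $R(z,\bar z)=0$; nothing forces $z$ onto the source null cone.

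Second, $[Y]$ is a nonconstant map $\PP^1\to\PP^1$, hence surjective, so it \emph{does} meet the circle $S_U$. At such a point, the relation $[Y](z)=[Y](\iota_C z)$ that you derive is no contradiction. It only says that the fiber over a fixed point of $\iota_U$ is $\iota_C$-stable, and that forces $\nu$ even. This is exactly the argument of \cref{lem:U-nondegenerate}(ii).

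Third, the polarity relation by itself cannot exclude signature $(1,1)$. The map $t\mapsto t^2$ intertwines the antipodal map $t\mapsto -1/\bar t$ with the circle inversion $t\mapsto 1/\bar t$. So definiteness of $U$ is not available before this lemma. In the paper it is \emph{deduced} from $\nu$ odd, so your main line is circular.

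\textbf{Your fallback is correct and is precisely the paper's proof; make it the argument.}
\begin{itemize}
\item By \eqref{eq:Wronskian-square}, the ramification divisor of $[Y]$ equals $2\operatorname{div}(\rho)$.
\item $\iota_C$ and $\iota_U$ are antiholomorphic diffeomorphisms, with $\iota_U$ defined thanks to \cref{lem:U-nondegenerate}(i). Hence \eqref{eq:polarity-Y} makes this divisor $\iota_C$-invariant.
\item Since $\iota_C$ has no fixed point, the zeros of $\rho$ occur in pairs $\{p,\iota_C p\}$ of equal multiplicity. So $\nu-1=\deg\rho$ is even, equivalently $2\nu-2\equiv 0 \pmod 4$ by Riemann--Hurwitz.
\item Together with $1\le\nu\le 6$, this gives $\nu\in\{1,3,5\}$.
\end{itemize}
With this argument, Borsuk--Ulam is not needed at all.
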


\begin{proof}
By \eqref{eq:Wronskian-square}, every ramification multiplicity of
\([Y]\) is even.  By \eqref{eq:polarity-Y}, the ramification divisor is
\(\iota_C\)-invariant.  Because \(\iota_C\) has no fixed point, its points
occur in pairs; hence the total ramification degree is divisible by four.
Riemann--Hurwitz gives $\deg\operatorname{Ram}([Y])=2\nu-2,$ so \(\nu\) is odd.  Since \(Y_1,Y_2\) are homogeneous components of the
degree-six map \(F|_C\), one has \(1\le\nu\le6\).  Therefore
\(\nu\in\{1,3,5\}\). 
\end{proof}

By \cref{lem:U-nondegenerate}(ii) and
\cref{lem:degree-alternative}, the Hermitian form $\langle\ ,\ \rangle_{1,4}$ on \(U\) is negative
definite.

The following normal form will be used in all three cases.

\begin{lemma}[Projection normal form]\label{lem:projection-normal-form}
There exist homogeneous scalar forms \(\widehat\chi=\widehat\chi(Z)\) and
\(\widetilde g=\widetilde g(Z)\), of degrees \(6-\nu\) and \(5\), respectively, and a
homogeneous cubic \(U^\perp\)-valued form \(K=K(Z)\), such that
\begin{equation}\label{eq:common-negative-form}
 F=\widehat\chi\,Y+x\widetilde g\,e_\perp+x^3K,
\end{equation}
with
\begin{equation}\label{eq:common-boundary-data}
 \widehat\chi(0,z)=\chi(z),\qquad
 \widetilde g(0,z)=\chi(z)\rho(z).
\end{equation}
\end{lemma}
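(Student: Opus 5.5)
The plan is to expand $F$ in powers of the linear form $x$ along $C=(x=0)$ and read off each Taylor coefficient from the data already established: the divisibility \eqref{eq:c3-div}, the constancy of $U\subset V$, and the pure-trace relation \eqref{eq:pure-trace}. First decompose the target as $\CC^5=U\oplus U^\perp$; this is possible because $\langle\ ,\ \rangle_{1,4}$ is nondegenerate on $U$ by \cref{lem:U-nondegenerate}(i). Since $e_\perp\in V\cap U^\perp$ spans $V/U$, we get $V=U\oplus\CC e_\perp$. Then split $F=F_U+F_{U^\perp}$, and further split $F_{U^\perp}=\phi\,e_\perp+F''$, where $F''$ takes values in $U^\perp\cap e_\perp^{\perp}$, a complement of $\CC e_\perp$ in $U^\perp$ that is isomorphic to the annihilator directions of $V$ restricted to $U^\perp$. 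Because $V$ is a constant space containing $U$, every $n\in\operatorname{Ann}(V)$ vanishes on $U$ and $e_\perp$. So \eqref{eq:c3-div} applied to the two independent functionals in $\operatorname{Ann}(V)$ gives $x^3\mid F''$, and we write $F''=x^3K_1$ with $K_1$ a cubic $U^\perp$-valued form.

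Next treat the $e_\perp$-component $\phi$. By \eqref{eq:F-on-C}, $F|_C=\chi Y\in U$, so $\phi|_C=0$ and $\phi=x\widetilde g_0$ with $\widetilde g_0$ a quintic. Then $\partial_x\phi|_C=\widetilde g_0(0,z)$, and by the definition of $g_C$ as the $V/U$-class of $\partial_xF$ we get $\widetilde g_0(0,z)=g_C=\chi\rho$. The same reading applies to derivatives of the $U$-part in \eqref{eq:pure-trace}.

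The $U$-component is $F_U$, a sextic $U$-valued form. Since $\dim U=2$, we want to write it as $\widehat\chi Y$ plus terms that can be absorbed into $x\widetilde g\,e_\perp+x^3K$. This is the delicate point, because $Y$ has degree $\nu$ in $z$ only, and $F_U$ need not factor as a scalar multiple of a fixed $Y$ off $C$. I would use the freedom in the statement: $K$ is only required to be $U^\perp$-valued, but the leftover $U$-valued terms must also be disposed of. The main obstacle is showing that $F_U-\widehat\chi Y$ is divisible appropriately, or else reinterpreting $Y$ as extended homogeneously (for instance $Y(Z):=Y(z)$, independent of $x$). The first thing to try is to choose $\widehat\chi$ and, by polynomial division by $x$ in $U$-coordinates, write $F_U=\widehat\chi Y+xP_1+x^2P_2+x^3P_3$. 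Then use \eqref{eq:pure-trace}, in the gauge of \cref{lem:pure-trace-reduction}, together with the Hermitian polarity \eqref{eq:polarity-Y} and the incidence identity \eqref{eq:polarized-identity} restricted to the first and second normal orders along the negative line $C$, to show that $P_1$ and $P_2$ can be absorbed into $\widehat\chi Y$ by modifying $\widehat\chi$ by multiples of $x$ and $x^2$. The mechanism is that the normal derivatives $\partial_xF_U$ and $\partial_x^2F_U$ on $C$ are forced to be proportional to $Y$: the pure-trace relation gives $\partial_xD_\tau F\equiv\varrho(\tau)\partial_xF+\varrho(\partial_x)D_\tau F$ modulo $F$, with $U$-part in $\Span\{Y,Y'\}$. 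Combining this with $\gcd(Y_1,Y_2)=1$ and a Wronskian argument using \eqref{eq:Wronskian-square} should exclude the $Y'$ contributions. Any residual $x^3P_3$ with values in $U$ would then have to be shown to vanish or to be absorbed the same way.

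Finally, collect the pieces: $\widetilde g=\widetilde g_0$ satisfies \eqref{eq:common-boundary-data}, $\widehat\chi(0,z)=\chi$ follows by restricting to $x=0$, and $K$ is the cubic $U^\perp$-valued remainder. Degrees are forced by homogeneity of $F$ in degree six.
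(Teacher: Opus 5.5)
\textbf{There is a genuine gap, in the $U$-component.} Your treatment of the $U^\perp$-part matches the paper: the components transverse to $V$ are divisible by $x^3$ by \eqref{eq:c3-div}, and the $e_\perp$-coefficient vanishes on $C$ with normal derivative $g_C=\chi\rho$.

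One correction there. You cannot take $U^\perp\cap e_\perp^{\perp}$ as the complement of $\CC e_\perp$ in $U^\perp$. At this stage nothing rules out $\langle e_\perp,e_\perp\rangle_{1,4}=0$; Case III later has to exclude exactly this possibility, and in that case $e_\perp$ itself lies in that subspace. Take instead an arbitrary vector-space complement $E_N$. Because $V=U\oplus\CC e_\perp$, the map $E_N\to\CC^5/V$ is an isomorphism, so \eqref{eq:c3-div} still gives $x^3$-divisibility of the $E_N$-component.

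Since $e_\perp$ and $K$ are $U^\perp$-valued, the statement demands $\pi_UF=\widehat\chi Y$ exactly, to every order in $x$. Your plan tries to control only the first two normal orders through the pure-trace relation, and it leaves a $U$-valued term $x^3P_3$ open. This cannot work. The identity \eqref{eq:pure-trace} is second-order information along $C$, so it says nothing about normal terms of order three and higher. Even at first order it only relates the tangential derivative of $\partial_xF|_C$ to first derivatives modulo $\CC F$. That by itself does not force the $U$-part of $\partial_xF|_C$ into $\CC Y$.

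The missing idea is to apply the incidence identity with the second point on the $W$-copy of $C$:
\begin{itemize}
\item For general $Z=(x,z)$, let $w$ satisfy $\sigma(z,w)=0$ and set $W_0=(0,w)$. Then $(Z,W_0)_{1,2}=x\cdot 0-\sigma(z,w)=0$ for every $x$.
\item $F^*(W_0)=\chi^*(w)Y^*(w)$ has values in $\overline U$, so \eqref{eq:FG-zero-on-I} gives $(\pi_UF(x,z),Y^*(w))_{1,4}=0$.
\item For general $w$, nondegeneracy of $U$ makes the functional $u\mapsto(u,Y^*(w))_{1,4}$ nonzero on $U$. Its kernel contains $Y(z)$ by the restricted identity $(Y(z),Y^*(w))_{1,4}=0$, so the kernel is $\CC Y(z)$.
\item Hence $\pi_UF(x,z)\in\CC Y(z)$ identically in $x$, not just to some finite normal order.
\end{itemize}
Coprimality of $Y_1,Y_2$ then makes the ratio a homogeneous polynomial $\widehat\chi$ of degree $6-\nu$ with $\widehat\chi(0,z)=\chi(z)$. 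After that, $F-\widehat\chi Y$ is $U^\perp$-valued and vanishes on $C$, and your $U^\perp$ argument completes the proof. No Wronskian or pure-trace input is needed for this lemma beyond the definition $g_C=\chi\rho$.
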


\begin{proof}
Since the Hermitian form \(\langle\ ,\ \rangle_{1,4}\) is nondegenerate on \(U\), write
\(\CC^5=U\oplus U^\perp\) and let \(\pi_U:\CC^5\to U\) be the projection associated with this orthogonal decomposition.  For general
\(Z=(x,z)\), let \(W_0=(0,w)\) be the unique point in the \(W\)-copy of
\(C\) satisfying \(\sigma(z,w)=0\).  Since
\(F^*(W_0)=\chi^*(w)Y^*(w)\), the incidence identity \eqref{eq:FG-zero-on-I} gives
\[
 (\pi_UF(Z),Y^*(w))_{1,4}=0.
\]
Inside the nondegenerate two-plane \(U\), the kernel of the complexified
linear functional \(u\mapsto (u,Y^*(w))_{1,4}\) is the line \(\CC Y(z)\).  Thus \(\pi_UF\) is pointwise
proportional to \(Y\).  Since \(Y_1,Y_2\) are coprime, this proportionality
factor is a homogeneous form \(\widehat\chi\) of degree \(6-\nu\), and
\(\widehat\chi|_C=\chi\).

The vector \(F-\widehat\chi Y\) takes values in \(U^\perp\) and vanishes
on \(C\), hence is divisible by \(x\).  Its image in
\(\CC^5/V\) is divisible by \(x^3\) by \eqref{eq:c3-div}.  Choose a
vector-space decomposition
\[
 U^\perp=\CC e_\perp\oplus E_N.
\]
The \(E_N\)-component is therefore divisible by \(x^3\), which gives
\eqref{eq:common-negative-form}.  Restricting the transverse first
derivative to \(C\) yields the second identity in
\eqref{eq:common-boundary-data}.
\end{proof}

We now treat the three alternatives in \cref{lem:degree-alternative}.

\textbf{Case I:} \(\nu=1\).
After a unitary change of basis in the negative two-plane \(U\), followed
by a scalar rescaling of \(Y\), we may assume
\begin{equation}\label{eq:caseI-pairing}
 (Y(z),Y^*(w))_{1,4}=-\sigma(z,w),
\end{equation}
where $\sigma(z,w)$ is defined in \eqref{eq:sigma-def}. By \cref{lem:gauge-invariance}, we may work on a connected open set on which \(\widehat\chi\neq0\) and replace the lift by \(F/\widehat\chi\).  Since \(\rho\) is a nonzero constant in
this case, rescale \(e_\perp\) so that \(\rho=1\).  Expanding
\(\widetilde g/\widehat\chi\) in the normal variable \(x\), and absorbing
terms of order at least \(x^3\) into the cubic remainder, gives
\begin{equation}\label{eq:caseI-pre}
 F=Y+xe_\perp+x^2\eta(z)e_\perp+x^3K,
 \qquad K\in U^\perp.
\end{equation}
Here \(\eta=\eta(z)\) is a local holomorphic scalar function on \(C\), and
\(K=K(x,z)\) is a local holomorphic \(U^\perp\)-valued function in the chosen gauge.

Fix a general complexified polar pair \(z,w_0\), so
\(\sigma(z,w_0)=0\).  Choose a vector \(\zeta\in\CC^2\) with
\(\sigma(z,\zeta)=1\), and set
\[
 W=(y,w_0+xy\zeta).
\]
Then \((Z,W)\) lies in the incidence variety for all sufficiently small
independent \(x,y\).  Substituting \eqref{eq:caseI-pre} into \eqref{eq:polarized-identity} and comparing
terms of total degree at most three in \(x,y\) gives
\[
 0=\bigl(\langle e_\perp,e_\perp\rangle_{1,4}-1\bigr)xy
   +\langle e_\perp,e_\perp\rangle_{1,4}x^2y\,\eta(z)
   +\langle e_\perp,e_\perp\rangle_{1,4}xy^2\,\eta^*(w_0)
   +O_{\ge4}(x,y).
\]
Hence
\begin{equation}\label{eq:caseI-delta}
 \langle e_\perp,e_\perp\rangle_{1,4}=1,\qquad \eta=0.
\end{equation}

Then$$
 (Y(z)+xe_\perp,\,
 Y^*(w)+ye_\perp^*)_{1,4}
 =xy-\sigma(z,w).
$$
Thus \(V=U\oplus\CC e_\perp\) has signature \((1,2)\), and
\(V^\perp\) is negative definite.  Decompose
$$
 K=\beta e_\perp+N,\qquad N\in V^\perp,
$$
where \(\beta=\beta(Z)\) is a local holomorphic scalar function and
\(N=N(Z)\) is a local holomorphic \(V^\perp\)-valued function.
On the incidence hypersurface, \eqref{eq:polarized-identity} becomes
$$
 x^3y\,\beta+xy^3\,\beta^*
 +x^3y^3\bigl(\beta\beta^*+(N,N^*)_{1,4}\bigr)=0.
$$
The incidence coordinate ring is a domain.  Cancelling \(xy\) and then
restricting to \(y=0\) on a dense open subset of the incidence divisor
gives \(\beta=0\).  Therefore
\begin{equation}\label{eq:caseI-N-identity}
(N(Z),N^*(W))_{1,4}=0
\qquad\text{whenever }(Z,W)_{1,2}=0.
\end{equation}

For general \(W\) with \(N^*(W)\neq0\), the kernel on the
negative-definite two-space \(V^\perp\) of the complexified functional
\(u\mapsto(u,N^*(W))_{1,4}\) is a single projective point.  Hence \([N]\) is constant on the polar-line germ
\(L_W\).  Through a general source point these polar lines provide an open
set of tangent directions, so \([N]\) is locally constant
where \(N\neq0\).  Write \(N=\beta_0v_0\) there, where
\(\beta_0=\beta_0(Z)\) is a local holomorphic scalar function and
\(v_0\in V^\perp\setminus\{0\}\) is fixed.  Equation
\eqref{eq:caseI-N-identity} becomes
$$
 \beta_0(Z)\beta_0^*(W)\langle v_0,v_0\rangle_{1,4}=0
$$
on the incidence variety.  Since
\(\langle v_0,v_0\rangle_{1,4}\neq0\), a nonzero value of \(\beta_0\) would
force \(\beta_0^*\) to vanish on an open family of polar lines, and hence
identically.  Thus \(N\equiv0\).

Consequently the projective map agrees locally with the linear map $
 [\,Y+xe_\perp\,].$ Since both are rational maps, the equality extends globally, contradicting
the assumption that the reduced degree is six.  Hence Case I cannot occur.

\textbf{Case II:} \(\nu=3\).
The total ramification degree of \([Y]\) is four.  By
\eqref{eq:Wronskian-square}, all ramification multiplicities are even, and
by \eqref{eq:polarity-Y} the ramification points are paired by
\(\iota_C\).  Hence there are exactly two ramification points, both of
multiplicity two, and they are exchanged by \(\iota_C\); their branch
values are exchanged by \(\iota_U\).  Choose unitary coordinates on the
source and on the negative two-plane \(U\) that send these two polar pairs
to the coordinate points.  A degree-three map totally ramified at both
coordinate points has the form
\([az_1^3:bz_2^3]\) for constants \(a,b\in\CC^\times\).  The equivariance
\eqref{eq:polarity-Y} gives \(|a|=|b|\).  After a unitary change of basis
in \(U\) and a scalar rescaling, we may therefore assume
\begin{equation}\label{eq:caseII-normalization}
 Y(z)=(z_1^3,z_2^3),
 \qquad
 (Y(z),Y^*(w))_{1,4}
 =-(z_1^3w_1^3+z_2^3w_2^3).
\end{equation}
The Wronskian is a nonzero multiple of \(z_1^2z_2^2\).  Thus
\eqref{eq:Wronskian-square}, after rescaling \(e_\perp\), allows us to
take
\[
 \rho=z_1z_2.
\]

By \cref{lem:gauge-invariance}, we may again work on a connected open set where \(\widehat\chi\neq0\) and divide the lift by \(\widehat\chi\).  Expanding the coefficient of
\(e_\perp\) to first order in \(x\), and absorbing terms of order at least
\(x^3\) into the remainder, gives
\begin{equation}\label{eq:caseII-pre}
 F=Y+x\rho e_\perp+x^2q\,e_\perp+x^3K,
 \qquad K\in U^\perp.
\end{equation}
Here \(q=q(z)\) is a local holomorphic scalar function, and
\(K=K(x,z)\) is a local holomorphic \(U^\perp\)-valued function in the chosen gauge.
Since
\[
 \sigma=z_1w_1+z_2w_2=xy,
 \qquad
 \rho\rho^*=z_1z_2w_1w_2,
\]
the contribution of \(Y+x\rho e_\perp\) to \eqref{eq:polarized-identity} is
\[
 -(z_1^3w_1^3+z_2^3w_2^3)
 +\langle e_\perp,e_\perp\rangle_{1,4}xy\,z_1z_2w_1w_2
 =-\sigma^3+\bigl(3+\langle e_\perp,e_\perp\rangle_{1,4}\bigr)
 z_1z_2w_1w_2\,\sigma.
\]
For a general complexified polar pair with \(z_1z_2w_1w_2\neq0\), comparison of the lowest
nonzero order in \(x,y\) gives
\(\langle e_\perp,e_\perp\rangle_{1,4}=-3\).  The next order then
forces \(q=0\).  Hence
\[
 F=Y+x\rho e_\perp+x^3K.
\]
The three-space \(V=U\oplus\CC e_\perp\) is therefore negative definite.
Write
\[
 K=\beta e_\perp+N,\qquad N\in V^\perp.
\]
Here \(\beta=\beta(Z)\) is a local holomorphic scalar function and
\(N=N(Z)\) is a local holomorphic \(V^\perp\)-valued function.
Substitution into \eqref{eq:polarized-identity} gives, on the incidence
hypersurface,
\[
 0=-x^3y^3
   -3x^3y\,\beta\rho^*
   -3xy^3\,\rho\beta^*
   +x^3y^3\bigl(-3\beta\beta^*+(N,N^*)_{1,4}\bigr).
\]
Cancelling \(xy\) and restricting to \(y=0\) at a general complexified polar pair with
\(\rho^*(w)\neq0\) gives \(\beta=0\).  Therefore
\begin{equation}\label{eq:caseII-N-identity}
 (N(Z),N^*(W))_{1,4}=1
 \qquad\text{whenever }(Z,W)_{1,2}=0.
\end{equation}

The space \(V^\perp\) has signature \((1,1)\).  Choose a basis
\(n_1,n_2\) and write \(N=N_1n_1+N_2n_2\), where
\(N_1=N_1(Z)\) and \(N_2=N_2(Z)\) are local holomorphic scalar functions.  Define
\[
 \Theta(Z):=[1:N_1(Z):N_2(Z)]
 \in\PP(\CC\oplus V^\perp),
\]
where \(\CC\oplus V^\perp\) is equipped with the Hermitian form
\[
 \bigl\langle(\alpha,u),(\beta,v)\bigr\rangle_{\mathcal H}
 :=-\alpha\overline{\beta}+\langle u,v\rangle_{1,4},
\]
whose algebraic complexification is
\[
 \mathcal H\bigl((\alpha,u),(\beta,v)\bigr)
 :=-\alpha\beta+(u,v)_{1,4}.
\]
Then \eqref{eq:caseII-N-identity} becomes
\begin{equation}\label{eq:caseII-Theta-polarity}
 \mathcal H\bigl(\Theta(Z),\Theta^*(W)\bigr)=0
 \qquad\text{whenever }(Z,W)_{1,2}=0.
\end{equation}
For fixed \(W\), the right-hand condition says that the germ of
\(\Theta(L_W)\) is contained in a projective line.

By the local line-preserving alternative, stated and proved as
\cref{lem:line-preserving} in \cref{subsec:app-line-preserving}, the
preceding line-preserving property together with the polarity relation
\eqref{eq:caseII-Theta-polarity} implies that \(\Theta\) is either
constant or agrees on a nonempty open set with a projective linear
transformation.

If \(\Theta\) is constant, then \(N_1,N_2\) are constant in the chosen
local gauge, and \eqref{eq:caseII-pre} gives a homogeneous cubic
representation of the same rational map.  Hence \(\deg [F]\le3\).  

If
\(\Theta\) is projective linear, write
\[
 [1:N_1:N_2]=[L_0:L_1:L_2]
\]
with homogeneous linear forms \(L_i=L_i(Z)\).  Multiplying $F$ by \(L_0\) gives
\[
 L_0Y+xL_0\rho\,e_\perp+x^3(L_1n_1+L_2n_2).
\]
Thus \(\deg [F]\le4\). Hence Case II cannot occur.

\textbf{Case III:} \(\nu=5\). In this
case \(\chi\) is linear, \(Y_1,Y_2\) are coprime quintics, and
\(\rho\) is a binary quartic.  Since \(\widehat\chi\) has degree one, the
projection normal form can be written as
\begin{equation}\label{eq:caseIII-common-form}
 F=\widehat\chi\,Y+x\widetilde g\,e_\perp+x^3K,
 \qquad
 \widehat\chi=\chi+\alpha x,
 \qquad
 \widetilde g(0,z)=\chi(z)\rho(z),
\end{equation}
where \(\alpha\in\CC\) is a constant.

For use below, we make explicit a simple rank notion.  If
\(P(z,w)\) is bihomogeneous, its \emph{separated rank} is the least integer
\(r\) for which, for suitable scalar forms \(a_j=a_j(z)\) and \(b_j=b_j(w)\),
\[
 P(z,w)=\sum_{j=1}^r a_j(z)b_j(w).
\]
A bihomogeneous polynomial \(B(z,w)\) of bidegree \((d,d)\) will be called a \emph{Hermitian kernel} if its coefficient matrix on \(\Sym^d\CC^2\) is Hermitian; equivalently, writing
$$
B(z,w)=\sum_{|\mu|=|\nu|=d} b_{\mu\nu}z^\mu w^\nu,
$$
one has \(b_{\mu\nu}=\overline{b_{\nu\mu}}\).  In particular,
\(B(z,\overline z)\in\mathbb R\).

\begin{lemma}[Separated-rank estimates]\label{lem:caseIII-rank-facts}
Let \(\sigma(z,w)=z_1w_1+z_2w_2\).
\begin{enumerate}[label=\textup{(\roman*)}]
\item The polynomial \(\sigma\) is irreducible.  In particular, it cannot
divide a nonzero separated product \(a(z)b(w)\).
\item If \(0\neq P(z,w)\) is divisible by \(\sigma^q\) for an integer \(q\ge0\), then the separated
rank of \(P\) is at least \(q+1\).
\item Let \(B_2\) be a Hermitian kernel of bidegree \((2,2)\).  If its
coefficient matrix on \(\Sym^2\CC^2\) is positive definite, then
\(\sigma^3B_2\) has separated rank six.
\end{enumerate}
\end{lemma}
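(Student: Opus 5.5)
For (i) I will use the rank of a quadratic form. Viewed as a quadratic form in the four variables $(z_1,z_2,w_1,w_2)$, $\sigma$ has rank four. A quadric that factors as a product of two linear forms has rank at most two, so $\sigma$ is irreducible. Since $\CC[z,w]$ is a UFD, $\sigma$ is then prime. Hence if $\sigma\mid a(z)b(w)$ with $a,b\neq0$, then $\sigma$ divides $a(z)$ or $b(w)$. Either is impossible, because $\sigma$ has positive degree in both $z$ and $w$ while $a$ is independent of $w$ and $b$ is independent of $z$.

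For (ii) I will first record that the separated rank of $P$ equals $\dim\Span\{P(\cdot,w):w\in\CC^2\}$. This is the rank of the coefficient matrix of $P$, so it suffices to produce $q+1$ independent members of the span.

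\emph{Choice of base point.} Write $P=\sigma^qP'$ with $P'\neq0$. Choose $w_0$ generic, so that $\sigma_0:=\sigma(\cdot,w_0)$ is a nonzero linear form in $z$ and $P'_0:=P'(\cdot,w_0)$ is not divisible by $\sigma_0$. This is possible because only finitely many lines divide $P'(\cdot,w)$ for general $w$, and the line $\sigma_0$ moves with $w_0$. Then choose $\zeta$ so that $\sigma_1:=\sigma(\cdot,\zeta)$ is not proportional to $\sigma_0$.

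\emph{Taylor coefficients.} The Taylor coefficients in $\varepsilon$ of $P(z,w_0+\varepsilon\zeta)$ lie in the span. Writing $P'(\cdot,w_0+\varepsilon\zeta)=\sum_k\varepsilon^kP'_k$, the $j$-th coefficient for $0\le j\le q$ is $\sum_i\binom{q}{i}\sigma_0^{q-i}\sigma_1^iP'_{j-i}$. This vanishes to order at least $q-j$ along the line $\sigma_0=0$. Moreover, modulo $\sigma_0^{q-j+1}$ it equals $\binom{q}{j}\sigma_0^{q-j}\sigma_1^jP'_0$, which is nonzero by the choice of $w_0$ and $\zeta$.

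\emph{Conclusion.} So the $j$-th coefficient has exact vanishing order $q-j$ along $\sigma_0=0$. Elements with pairwise distinct exact vanishing orders are linearly independent, which gives $q+1$ independent elements.

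For (iii), the separated rank is at most $\dim\Sym^5\CC^2=6$, so I only need the lower bound.

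\emph{Decomposing both factors.} Since the coefficient matrix of $B_2$ is positive definite, I can write $B_2(z,w)=\sum_{j=1}^3 b_j(z)b_j^*(w)$ with $b_1,b_2,b_3$ a basis of binary quadrics. Likewise, the multinomial expansion gives $\sigma^3=\sum_{|\alpha|=3}\binom{3}{\alpha}z^\alpha w^\alpha$, whose coefficient matrix is positive diagonal. Hence
\[
\sigma^3B_2=\sum_{\alpha,j}\binom{3}{\alpha}\,(z^\alpha b_j)(z)\,(z^\alpha b_j)^*(w).
\]

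\emph{Rank count.} The right-hand side is a sum of positive semidefinite rank-one Hermitian matrices with positive weights. The rank of such a sum equals the dimension of the span of the vectors involved, here $\Span\{z^\alpha b_j\}$. That span is the image of the multiplication map $\Sym^3\otimes\Sym^2\to\Sym^5$, which is surjective because the $b_j$ span $\Sym^2$. So the rank is six.

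The main obstacle is the genericity bookkeeping in (ii), namely guaranteeing that $\sigma_0\nmid P'_0$. I would handle it by noting that $P'(\cdot,w)$ has only finitely many linear factors for general $w$, while the line $\sigma(\cdot,w)$ varies with $w$.
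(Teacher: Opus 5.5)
Your arguments for (i) and (iii) are correct, and in (iii) you actually carry out the positive-definiteness check that the paper leaves to the reader. Part (ii) follows the paper's strategy: it takes the Taylor coefficients of $P(z,w_0+\varepsilon\zeta)$ and shows they have pairwise distinct exact $\sigma_0$-adic orders. However, there is a genuine gap at the step you yourself flagged.

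You write $P=\sigma^qP'$ assuming only $P'\neq0$, and then ask for $w_0$ with $\sigma(\cdot,w_0)\nmid P'(\cdot,w_0)$. If $\sigma\mid P'$, that is $\sigma^{q+1}\mid P$, no such $w_0$ exists, because then $\sigma(\cdot,w)$ divides $P'(\cdot,w)$ for every $w$. Your justification is that $P'(\cdot,w)$ has finitely many linear factors while the line $\sigma(\cdot,w)$ moves with $w$. This does not work: those linear factors also move with $w$ and can track $\sigma(\cdot,w)$ exactly. The simplest instance is $P'=\sigma$. So as written, the argument fails whenever $q$ is not the exact power of $\sigma$ dividing $P$.

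The repair is the one the paper uses. Replace $q$ by the largest $Q\ge q$ with $\sigma^Q\mid P$, which exists since $P\neq0$. Then $\sigma\nmid P'$, and the bound you obtain, $Q+1$, is at least $q+1$.

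Existence of $w_0$ now follows from (i). Suppose $\sigma(\cdot,w)\mid P'(\cdot,w)$ held for every $w\neq0$. Then $P'$ would vanish on the dense subset $\{\sigma=0,\ w\neq0\}$ of the irreducible hypersurface $\{\sigma=0\}$, hence on all of it. The Nullstellensatz would then give $\sigma\mid P'$, a contradiction.

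With this choice of $w_0$, take any $\zeta$ not proportional to $w_0$; this makes $\sigma_1$ not proportional to $\sigma_0$, since $w\mapsto\sigma(\cdot,w)$ is a linear isomorphism. Your computation of the $j$-th coefficient modulo $\sigma_0^{Q-j+1}$, for $0\le j\le Q$, then goes through unchanged.
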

To avoid interrupting the main line of the argument, we defer the proof of \cref{lem:caseIII-rank-facts} to \cref{subsec:app-separated-rank}.

We first show that \(e_\perp\) is nonisotropic. 
Assume \(\langle e_\perp,e_\perp\rangle_{1,4}=0\). Since \(U\) is negative definite, choose vectors $e_\perp,\ \widetilde e_\perp$ and $n_-$ of \(U^\perp\) such that
\[
 \langle e_\perp,\widetilde e_\perp\rangle_{1,4}=1,\qquad
 \langle e_\perp,e_\perp\rangle_{1,4}
 =\langle \widetilde e_\perp,\widetilde e_\perp\rangle_{1,4}=0,\qquad
 \langle n_-,n_-\rangle_{1,4}=-1,
\]
and all remaining pairings vanish.  Absorbing the $e_\perp$-component
of $x^3K$ into $\widetilde g$ (which does not change
$\widetilde g(0,z)$), write
\[
 F=\widehat\chi\,Y+x\widetilde g\,e_\perp
   +x^3A_3\widetilde e_\perp+x^3B_3n_-,
\]
where \(A_3=A_3(Z)\) and \(B_3=B_3(Z)\) are homogeneous scalar cubic forms in
\(Z=(x,z)\).  By \eqref{eq:FG-zero-on-I} on \(C\) and the irreducibility of \(\sigma\), we may write
\[
 (Y(z),Y^*(w))_{1,4}=\sigma(z,w)A_4(z,w),
\]
where \(A_4=A_4(z,w)\) is a bihomogeneous scalar form of bidegree \((4,4)\).
After dividing \eqref{eq:polarized-identity} by \(xy=\sigma\), we obtain
\begin{equation}\label{eq:caseIII-isotropic-identity}
 \widehat\chi\widehat\chi^*A_4
 +y^2\widetilde g\,A_3^*
 +x^2A_3\widetilde g^*
 -x^2y^2B_3B_3^*=0
\end{equation}
on the incidence hypersurface.

To compare normal orders, localize at \(x\neq0\) and substitute
\(y=\sigma/x\).  Write
\[
 A_3=a_0+xa_1+x^2a_2+x^3a_3,
 \qquad
 \widetilde g=g_0+xg_1+\cdots+x^5g_5,
 \qquad
 g_0=\chi\rho,
\]
where \(a_j=a_j(z)\) and \(g_j=g_j(z)\) are binary forms of degrees
\(3-j\) and \(5-j\), respectively.
Comparison of the two highest powers of \(x\) in
\eqref{eq:caseIII-isotropic-identity} gives \(a_3=a_2=0\).  The next two
coefficients have the form
\[
 a_j(z)g_0^*(w)+\sigma(z,w)^2C_j(z,w)=0,
 \qquad j=1,0,
\]
for some bihomogeneous scalar forms \(C_j=C_j(z,w)\).
If \(a_j\neq0\), then \(\sigma\) would divide the nonzero separated
product \(a_j(z)g_0^*(w)\), contrary to
\cref{lem:caseIII-rank-facts}(i).  Hence \(a_1=a_0=0\), so
\(A_3=0\).

The image of \(F\) is then contained in
\(U\oplus\CC e_\perp\oplus\CC n_-\), on which the target Hermitian form $\langle\ ,\ \rangle_{1,4}$ is
negative semidefinite.  On the other hand, the point
\[
 p_+:=[1:0:0]
\]
is positive for the source form and lies in \(\B^2\); by
\cref{lem:no-base}, \(F(p_+)\neq0\), and properness implies that
\(F(p_+)\) is positive.  This is impossible.  Therefore $\langle e_\perp,e_\perp\rangle_{1,4}\neq0.$

Thus \(V=U\oplus\CC e_\perp\) is nondegenerate.  Absorbing the
\(e_\perp\)-component of \(x^3K\) into \(\widetilde g\), we may write
\begin{equation}\label{eq:caseIII-first-form}
 F=\widehat\chi\,Y+x\widetilde g\,e_\perp+x^3H,
 \qquad H\in V^\perp.
\end{equation}
Here \(H=H(Z)\) is a homogeneous cubic \(V^\perp\)-valued form in \(Z=(x,z)\).
The polarized identity \eqref{eq:polarized-identity}, after division by \(xy=\sigma\), becomes
\begin{equation}\label{eq:caseIII-main-identity}
 \widehat\chi\widehat\chi^*A_4
 +\langle e_\perp,e_\perp\rangle_{1,4}\,\widetilde g\,\widetilde g^*
 +\sigma^2(H,H^*)_{1,4}=0.
\end{equation}

\begin{lemma}\label{lem:caseIII-normal-coeff}
One has $\widetilde g=\widehat\chi\,\rho.$
\end{lemma}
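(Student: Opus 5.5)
Set $h:=\widetilde g-\widehat\chi\rho$, where $\rho=\rho(z)$ is regarded as a form in $Z$ independent of $x$. By \eqref{eq:common-boundary-data}, $h(0,z)=\chi\rho-\chi\rho=0$, so $h=x h_1$ with $h_1$ a homogeneous quartic in $Z$. The plan is to show $h_1=0$ by comparing normal orders in \eqref{eq:caseIII-main-identity}, in the same way that $A_3=0$ was obtained in the isotropic subcase.

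First I determine the leading structure of $A_4$. Restrict \eqref{eq:caseIII-main-identity} to $x=0$, hence to $\sigma=xy=0$ on the incidence, and use $\widetilde g=\widehat\chi\rho+xh_1$. This gives
\[
\chi\chi^*\bigl(A_4+\epsilon\rho\rho^*\bigr)\equiv0 \pmod{\sigma},\qquad \epsilon:=\langle e_\perp,e_\perp\rangle_{1,4}.
\]
So $A_4=-\epsilon\rho\rho^*+\sigma B$ with $B$ of bidegree $(3,3)$; this is consistent with \eqref{eq:half-Wronskian}. Substituting back and using $\widehat\chi=\chi+\alpha x$, the identity reads
\[
\widehat\chi\widehat\chi^*\sigma B+\epsilon\bigl(x h_1\widehat\chi^*\rho^*+y\widehat\chi\rho h_1^*+xy\,h_1h_1^*\bigr)+\sigma^2(H,H^*)_{1,4}=0 .
\]
Here $\rho$ is independent of $x$ and $\rho^*$ of $y$, so the $\widehat\chi\widehat\chi^*\rho\rho^*$ terms cancel exactly.

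Next, localize at $x\ne0$ and substitute $y=\sigma/x$, exactly as in the proof that $A_3=0$. Expand $h_1=\sum_{j\le4}x^jc_j(z)$ and collect the two extreme powers of $x$. The term $xh_1\widehat\chi^*\rho^*$ carries powers $x^{1+j}$, while every other term is either divisible by $\sigma$ or carries negative powers of $x$ coming from $y$. Suppose some $c_j\ne0$ and take $j$ maximal. In the highest power of $x$, the coefficient must then be a separated product $c_j(z)\chi^*(w)\rho^*(w)$, plus terms divisible by $\sigma$. By \cref{lem:caseIII-rank-facts}(i), $\sigma$ cannot divide this product, so $c_j=0$, a contradiction. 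Inducting downward kills all $c_j$, hence $h_1=0$ and $\widetilde g=\widehat\chi\rho$.

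The main obstacle is bookkeeping of the other terms at each power of $x$. The terms $y\widehat\chi\rho h_1^*$ and $xy\,h_1h_1^*$ produce $\sigma$-multiples at shifted orders. The $\sigma B$ and $\sigma^2(H,H^*)$ terms also feed in, since $B$ and $H$ depend on $x$ and $y$ after the substitution. One must check that at the top surviving order everything other than $c_j\chi^*\rho^*$ is divisible by $\sigma$. If the cross term $y\widehat\chi\rho h_1^*$ interferes by symmetry, I would instead pair the top $x$-order with the top $y$-order. I would then use the Hermitian symmetry of the identity (swapping $Z\leftrightarrow W$ with conjugation) to conclude $c_j(z)\chi^*\rho^*(w)+\overline{\text{(same)}}\equiv0\pmod\sigma$. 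The sum $c_j(z)\chi^*\rho^*(w)+\chi\rho(z)c_j^*(w)$ has separated rank at most $2$. By \cref{lem:caseIII-rank-facts}(ii), if it were divisible by $\sigma^q$ with $q\ge2$ it would vanish; when divisibility by $\sigma$ alone is available, irreducibility plus coprimality of $\chi\rho$ with $c_j$ along a generic polar pair still forces $c_j=0$.
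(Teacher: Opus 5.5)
Your proof is correct, but it is organized differently from the paper's.

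The paper splits on $\alpha$. For $\alpha\neq0$ it first shows $\widehat\chi\mid\widetilde g$, by evaluating \eqref{eq:caseIII-main-identity} at points of $\{\widehat\chi=0\}$ paired with polar points $(0,w)$ on $C$. Only then does it compare Laurent coefficients after $y=\sigma/x$ to kill $\rho_4,\dots,\rho_1$. For $\alpha=0$ it kills $g_5,\dots,g_1$ directly. The split is needed there because, for $\alpha\neq0$, the term $\widehat\chi\widehat\chi^*A_4$ contributes $\alpha\chi^*A_4$ at order $x^1$; this is not a multiple of $\sigma$ and blocks the extraction of $g_1$.

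Your preliminary congruence $A_4\equiv-\epsilon\rho\rho^*\pmod{\sigma}$ removes exactly this obstruction. You read it off from the restriction to $x=y=0$; note that you must set $y=0$ as well, not only $x=0$. It is a weak form of \eqref{eq:caseIII-B-def}, which the paper derives only after the lemma.

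With the congruence in place, substitute $y=\sigma/x$. Every term of your displayed identity except $\epsilon\,xh_1\chi^*\rho^*$ then carries an explicit factor $\sigma$. This includes the part $\overline\alpha\,xy\,h_1\rho^*=\overline\alpha\,\sigma h_1\rho^*$ of $xh_1\widehat\chi^*\rho^*$. Hence $\sigma\mid c_j(z)\chi^*(w)\rho^*(w)$ for all $j$ simultaneously. \cref{lem:caseIII-rank-facts}(i), together with $\epsilon\neq0$ and $\chi\rho\neq0$, then gives $h_1=0$, uniformly in $\alpha$.

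So the bookkeeping you flag as the main obstacle does not arise. No downward induction is needed, and $B$ depends only on $(z,w)$. The Hermitian-symmetry fallback in your last paragraph should simply be dropped; as written it is not sound, since the sum you form there is not even bihomogeneous.

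In effect, your argument is the evaluation of \eqref{eq:caseIII-main-identity} at $W=(0,w)$ with $\sigma(z,w)=0$ and $x$ arbitrary. That evaluation gives $\epsilon\chi^*(w)\rho^*(w)\bigl(\widetilde g(Z)-\widehat\chi(Z)\rho(z)\bigr)=0$. Your route is shorter and case-free. The paper's route has the advantage of needing no a priori information about $A_4$.
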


\begin{proof}
Assume first that \(\alpha\neq0\).  Take a general point
\(Z\in(\widehat\chi=0)\) with \(x\neq0\), and let \(W=(0,w)\in C\) be its
polar point.  Then \(\sigma(z,w)=0\).  In
\eqref{eq:caseIII-main-identity}, the first and third terms vanish, while
\[
 \widetilde g^*(0,w)=\chi^*(w)\rho^*(w)
\]
is generically nonzero.  Since \(\langle e_\perp,e_\perp\rangle_{1,4}\neq0\), it follows that
\(\widetilde g(Z)=0\).  Hence the linear form \(\widehat\chi\) divides
\(\widetilde g\):
\[
 \widetilde g=\widehat\chi\,\widetilde\rho,
\]
where \(\widetilde\rho=\widetilde\rho(Z)\) is a homogeneous scalar quartic form and
\(\widetilde\rho(0,z)=\rho(z)\).  Write
\[
 \widetilde\rho
 =\rho+x\rho_1+x^2\rho_2+x^3\rho_3+x^4\rho_4,
\]
where \(\rho_j=\rho_j(z)\) is a binary form of degree \(4-j\).
Substitute \(y=\sigma/x\) in \eqref{eq:caseIII-main-identity} and compare
powers of \(x\).  The two highest powers give
\(\rho_4=\rho_3=0\).  The next two have the form
\[
 c_j\,\chi^*(w)\rho_j(z)\rho^*(w)
 +\sigma(z,w)^2D_j(z,w)=0,
 \qquad j=2,1,
\]
where \(c_j\in\CC^\times\) are nonzero constants and
\(D_j=D_j(z,w)\) are bihomogeneous scalar forms.  By \cref{lem:caseIII-rank-facts}(i),
\(\rho_2=\rho_1=0\).  Thus \(\widetilde\rho=\rho\).

If \(\alpha=0\), then \(\widehat\chi=\chi\).  Write
\[
 \widetilde g=g_0+xg_1+\cdots+x^5g_5,
 \qquad g_0=\chi\rho,
\]where \(g_i(z)\) is a binary form of degree \(5-i\) for \(0\le i\le5\). The same coefficient comparison after \(y=\sigma/x\) gives
\(g_5=g_4=0\), followed by
\[
 g_j(z)g_0^*(w)+\sigma(z,w)^2D_j(z,w)=0,
 \qquad j=3,2,1,
\]
for some bihomogeneous scalar forms \(D_j=D_j(z,w)\).
Again \cref{lem:caseIII-rank-facts}(i) yields
\(g_3=g_2=g_1=0\).  Therefore
\(\widetilde g=\chi\rho=\widehat\chi\,\rho\).
\end{proof}

By \cref{lem:caseIII-normal-coeff},
\begin{equation}\label{eq:caseIII-factor-form}
 F=\widehat\chi\,(Y+x\rho e_\perp)+x^3H.
\end{equation}
At \(p_+=[1:0:0]\), both \(Y\) and \(\rho\) vanish, so the coefficient of
\(x^3\) in \(H\) is \(F(p_+)\), a nonzero positive vector.  If
\(\langle e_\perp,e_\perp\rangle_{1,4}>0\), then \(V\) has signature \((1,2)\) and
\(V^\perp\) is negative definite, which cannot contain this positive
vector.  Hence
\begin{equation}\label{eq:caseIII-delta-negative}
 \langle e_\perp,e_\perp\rangle_{1,4}<0.
\end{equation}
Thus \(V\) is negative definite and \(V^\perp\) has signature \((1,1)\).

Substituting \(\widetilde g=\widehat\chi\rho\) in
\eqref{eq:caseIII-main-identity} gives
\[
 \widehat\chi\widehat\chi^*
 \bigl(A_4+\langle e_\perp,e_\perp\rangle_{1,4}\rho\rho^*\bigr)
 +\sigma^2(H,H^*)_{1,4}=0.
\]
If \(\alpha\neq0\), the coefficient of \(x\) after the substitution
\(y=\sigma/x\) shows that
\(\sigma^2\mid A_4+\langle e_\perp,e_\perp\rangle_{1,4}\rho\rho^*\); if \(\alpha=0\), the constant
coefficient gives the same conclusion.  Therefore there is a Hermitian
kernel \(B_2\) of bidegree \((2,2)\) such that
\begin{equation}\label{eq:caseIII-B-def}
 A_4+\langle e_\perp,e_\perp\rangle_{1,4}\rho\rho^*=-\sigma^2B_2.
\end{equation}
Since \(U\) is negative definite and \(\langle e_\perp,e_\perp\rangle_{1,4}<0\),
\begin{equation}\label{eq:caseIII-B-positive-diagonal}
 B_2(z,\overline z)>0\qquad(z\neq0).
\end{equation}
Indeed, after choosing a negative orthonormal basis of \(U\), multiplying \eqref{eq:caseIII-B-def} by
\(-\sigma\) gives
\begin{equation}\label{eq:caseIII-rank4}
 \sigma^3B_2
 =Y_1Y_1^*+Y_2Y_2^*
 -\langle e_\perp,e_\perp\rangle_{1,4}(z_1\rho)(w_1\rho^*)
 -\langle e_\perp,e_\perp\rangle_{1,4}(z_2\rho)(w_2\rho^*).
\end{equation}
Thus \(\sigma^3B_2\) has separated rank at most four.  If
\(\sigma\mid B_2\), then \(\sigma^4\mid\sigma^3B_2\), and
\cref{lem:caseIII-rank-facts}(ii) would give separated rank at least five.
Hence $\sigma\nmid B_2.$
Substituting \eqref{eq:caseIII-B-def} into \eqref{eq:caseIII-main-identity} and cancelling the common factor \(\sigma^2\), we ontain
\begin{equation}\label{eq:caseIII-H-kernel}
 (H,H^*)_{1,4}
 =\widehat\chi\,\widehat\chi^*B_2.
\end{equation}

The final step is the following factorization statement.

\begin{lemma}\label{lem:caseIII-cubic-factorization}
Assume \eqref{eq:caseIII-B-positive-diagonal},
\eqref{eq:caseIII-rank4}, \(\sigma\nmid B_2\), and
\eqref{eq:caseIII-H-kernel}.  Then $\widehat\chi\mid H.$
\end{lemma}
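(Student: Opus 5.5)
The plan is to work on the linear divisor $(\widehat\chi=0)$, where \eqref{eq:caseIII-H-kernel} forces $H$ to be isotropic in a strong sense, and then use the signature of $V^\perp$ to get vanishing. Write $\widehat\chi=\chi+\alpha x$ and let $\Pi:=(\widehat\chi=0)\subset\PP^2_Z$, a line distinct from $C$. We may assume $\chi\not\equiv 0$ is a genuine linear form; if $\Pi=C$ the divisibility is immediate from \eqref{eq:caseIII-factor-form}, so assume $\Pi\neq C$.

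\emph{Step 1: restriction to $\Pi$.} By \eqref{eq:caseIII-H-kernel}, the identity $(H(Z),H^*(W))_{1,4}=0$ holds whenever $Z\in\Pi$ or $W\in\Pi^*:=(\widehat\chi^*=0)$. This holds for all pairs $(Z,W)$, not only incidence pairs, since \eqref{eq:caseIII-H-kernel} is a polynomial identity obtained after cancellation. Hence, for $Z\in\Pi$, the vector $H(Z)$ is orthogonal, with respect to the complexified pairing, to $H^*(W)$ for every $W$. The $W$-span of the values $H^*(W)$ is the conjugate of the span $S$ of the values of $H$. So $H|_\Pi$ takes values in $S\cap S^{\perp}$, where $\perp$ is taken in $V^\perp$ with respect to $\langle\ ,\ \rangle_{1,4}$.

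\emph{Step 2: show $S=V^\perp$.} From \eqref{eq:caseIII-factor-form} and $F(p_+)$ positive, $H$ is not identically zero. If $\dim S\le 1$, then $H=\beta v$ with a fixed $v$ and $\langle v,v\rangle\,\beta\beta^*=\widehat\chi\widehat\chi^*B_2$. Here the left side has separated rank one, while $B_2$ has positive-definite diagonal by \eqref{eq:caseIII-B-positive-diagonal}. I would argue that this forces $B_2$ to be a rank-one product $b b^*$ with $b$ of degree two in $z$ only, and that this contradicts \eqref{eq:caseIII-rank4} and $\sigma\nmid B_2$. The argument is via \cref{lem:caseIII-rank-facts}: for $\sigma^3 bb^*$ one computes the separated rank directly and compares it with the count in \eqref{eq:caseIII-rank4}. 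I expect this to be the main obstacle, because the positivity hypothesis only controls the diagonal of $B_2$, so the rank computation must be done carefully. If it fails, the fallback is to use \cref{lem:caseIII-rank-facts}(iii), replacing positivity on the diagonal by positive-definiteness of the coefficient matrix. That upgrade would come from reading \eqref{eq:caseIII-rank4} as a Gram representation with positive coefficients $1$ and $-\langle e_\perp,e_\perp\rangle_{1,4}$: the separated rank $\le 4<6$ contradicts (iii) unless the matrix is degenerate, and this degeneracy is what pins down the structure of $B_2$. Once $\dim S\ge 2$, we get $S=V^\perp$, a nondegenerate plane of signature $(1,1)$.

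\emph{Step 3: conclude vanishing.} Then $S\cap S^\perp=0$ because $V^\perp$ is nondegenerate, so $H|_\Pi\equiv 0$. Since $\widehat\chi$ is an irreducible linear form, this gives $\widehat\chi\mid H$, as claimed.
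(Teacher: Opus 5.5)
Your Step 1 rests on a false premise. The identity \eqref{eq:caseIII-H-kernel} is not a polynomial identity in $(Z,W)$. It comes from \eqref{eq:caseIII-main-identity}, which is the polarized identity divided by $xy=\sigma$, and the factor $\sigma^2$ is then cancelled in the incidence ring $\CC[Z,W]/(xy-\sigma)$. It cannot hold identically. The $x^3$-coefficient of $H$ is $u_+=F(p_+)\neq0$, so $(H,H^*)_{1,4}$ contains the term $\langle u_+,u_+\rangle_{1,4}\,x^3y^3$, whereas $\widehat\chi\,\widehat\chi^*B_2$ has degree at most one in $x$. This is why the paper's proof substitutes $y=\sigma/x$ and compares Laurent coefficients. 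What actually survives is weaker: for $Z\in\Pi$, one has $(H(Z),H^*(W))_{1,4}=0$ only for $W$ on the polar line $\{W:(Z,W)_{1,2}=0\}$. The global span $S$ then plays no role. You would instead need $H^*$, restricted to these polar lines (a pencil through the pole of $\Pi$), to span $\overline{V^\perp}$. Your Step 2 does not address this; it is in any case only a sketch built on the same misreading.

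This weaker statement genuinely does not suffice, and the rank hypothesis \eqref{eq:caseIII-rank4} must enter essentially. Take $\alpha=0$ and $\chi=z_2$. Let $u_+,u_-$ be an orthogonal basis of $V^\perp$ with $a_+=\langle u_+,u_+\rangle_{1,4}>0$ and $a_-=-\langle u_-,u_-\rangle_{1,4}>0$, choose $c$ with $a_+=a_-|c|^2$, and put
\[
 H=c\,z_1^3u_-+x^3u_+,\qquad
 B_2=a_+\bigl(3z_1^2w_1^2+3z_1z_2w_1w_2+z_2^2w_2^2\bigr).
\]
On $xy=\sigma$ one has $(H,H^*)_{1,4}=a_+(\sigma^3-z_1^3w_1^3)=z_2w_2B_2$. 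Moreover $B_2(z,\overline z)>0$, $\sigma\nmid B_2$, and $S=V^\perp$, yet $z_2\nmid H$. The reason is that on the polar line $y=(z_1/x)w_1$ of a point $(x,z_1,0)\in\Pi$, $H^*$ takes values in the single line spanned by $\overline c\,\overline{u}_-+(z_1/x)^3\overline{u}_+$. Steps 1 and 3 never use \eqref{eq:caseIII-rank4}, and Step 2 uses it only to get $\dim S\ge2$, which holds here anyway. So your argument, if valid, would prove a false statement.

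This configuration is exactly the paper's subcase $\alpha=0$, $P_3(q_0)\neq0$. It is excluded only because $B_2$ has a positive definite coefficient matrix, so $\sigma^3B_2$ has separated rank six by \cref{lem:caseIII-rank-facts}(iii), contradicting \eqref{eq:caseIII-rank4}. The paper's route is as follows:
\begin{enumerate}[label=\textup{(\arabic*)}]
\item compare the $x^3$ and $x^2$ coefficients to get $H=u_-(P_3+xP_2)+u_+x^2(P_1+x)$, with $P_3\neq0$ coming from $\sigma\nmid B_2$;
\item evaluate the $x$-coefficient at $w=q_0$;
\item exclude $P_3(q_0)\neq0$ by rank-six contradictions;
\item when $P_3(q_0)=0$, show $\alpha>0$, then force $P_3=\chi P_2/\alpha$ and $P_1=\chi/\alpha$ by separated-rank comparisons, positivity, and coefficient matching.
\end{enumerate}

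Once that normal form is in hand, a corrected polar-line version of your idea does work when $\alpha\neq0$. At the point $y=0$ of the polar line of a general $Z\in\Pi$, the value of $H^*$ is a nonzero multiple of $\overline{u}_-$. Meanwhile its $\overline{u}_+$-component $y^2(P_1^*+y)$ is not identically zero along that line, so the span is all of $\overline{V^\perp}$. It is the case $\alpha=0$ that needs \eqref{eq:caseIII-rank4}.
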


\begin{proof}
Write
\[
 H=H_0+xH_1+x^2H_2+x^3u_+,
 \qquad u_+:=F(p_+),
\]
where \(H_j=H_j(z)\) is a \(V^\perp\)-valued binary form of degree \(3-j\), for \(j=0,1,2\).
The vector \(u_+\) is positive.  Since \(V^\perp\) has signature
\((1,1)\), choose a negative vector \(u_-\) spanning \(u_+^\perp\), and
set
\[
 a_+:=\langle u_+,u_+\rangle_{1,4}>0,
 \qquad
 a_-:=-\langle u_-,u_-\rangle_{1,4}>0.
\]

We compare powers of the normal variable in
\eqref{eq:caseIII-H-kernel}.  After localizing at \(x\neq0\) and replacing
\(y\) by \(\sigma/x\), the factor
\[
 \widehat\chi\,\widehat\chi^*
 =(\chi+\alpha x)\left(\chi^*+\overline\alpha\,\frac{\sigma}{x}\right)
\]
contains only the powers \(x^{-1},x^0,x^1\).  Comparing the coefficients
of \(x^3\) and \(x^2\) on the two sides, and using
\cref{lem:caseIII-rank-facts}(i), gives
\begin{equation}\label{eq:caseIII-H-normal}
 H=u_-(P_3+xP_2)+u_+x^2(P_1+x),
\end{equation}
where \(P_j=P_j(z)\) is a binary form of degree \(j\).  Moreover \(P_3\neq0\);
otherwise the lowest power of \(x\) after the substitution \(y=\sigma/x\) in \eqref{eq:caseIII-H-kernel} would imply \(\sigma\mid B_2\).

Choose unitary coordinates on \(C\) so that \(\chi=z_2\), and let
\[
 q_0:=[1:0]\in C
\]
be its zero.  Multiplying the normal coordinate \(x\) by a unit scalar,
we may assume \(\alpha\ge0\) is real.  The coefficient of \(x\) in
\eqref{eq:caseIII-H-kernel} is
\begin{equation}\label{eq:caseIII-factor-eq}
 -a_-P_2(z)P_3^*(w)
 +a_+\sigma(z,w)^2P_1^*(w)
 =\alpha\chi^*(w)B_2(z,w).
\end{equation}
At \(w=q_0\), this becomes
\begin{equation}\label{eq:caseIII-factor-at-p}
 -a_-P_3^*(q_0)P_2(z)
 +a_+P_1^*(q_0)z_1^2=0.
\end{equation}

Suppose first that \(P_3(q_0)\neq0\).  Then
\(P_2=c\,z_1^2\) for some constant \(c\).  If \(\alpha>0\), write
\[
 B_2=b_0(w)z_1^2+b_1(w)z_1z_2+b_2(w)z_2^2,
\]
where \(b_0,b_1,b_2\) are binary quadratic forms in \(w=(w_1,w_2)\).
Comparing the \(z_1z_2\) and \(z_2^2\) coefficients in
\eqref{eq:caseIII-factor-eq} gives $w_2b_1=2w_1b_2.$ Together with Hermitian symmetry, this forces the coefficient matrix of
\(B_2\), in the basis
\((z_1^2,z_1z_2,z_2^2)\), to have the form
\[
 \diag(\beta,2\gamma,\gamma)
\]
for some real constants \(\beta,\gamma\).  The positivity \eqref{eq:caseIII-B-positive-diagonal} gives
\(\beta,\gamma>0\).  Hence this matrix is positive definite, and
\cref{lem:caseIII-rank-facts}(iii) implies that
\(\sigma^3B_2\) has rank six, contradicting
\eqref{eq:caseIII-rank4}.

If \(\alpha=0\), then \eqref{eq:caseIII-factor-eq} is
\[
 -a_-P_2P_3^*+a_+\sigma^2P_1^*=0.
\]
The first term has separated rank at most one, while by
\cref{lem:caseIII-rank-facts}(ii) the second has separated rank at least
three unless \(P_1=0\).  Hence \(P_1=P_2=0\).  Substituting \(P_1=P_2=0\) into \eqref{eq:caseIII-H-kernel}, then setting \(y=\sigma/x\) and \(w=q_0\), gives $P_3=cz_1^3$ and $a_+=a_-|c|^2.$ Substitution in \eqref{eq:caseIII-H-kernel} yields
\[
 B_2
 =a_+\bigl(
 3z_1^2w_1^2+3z_1z_2w_1w_2+z_2^2w_2^2
 \bigr),
\]
whose coefficient matrix is positive definite.  Again
\cref{lem:caseIII-rank-facts}(iii) contradicts
\eqref{eq:caseIII-rank4}.  Therefore
\[
 P_3(q_0)=0.
\]

Equation \eqref{eq:caseIII-factor-at-p} now gives \(P_1(q_0)=0\), so $P_3=\chi Q_2$ and $P_1=\kappa\chi,$ where \(Q_2=Q_2(z)\) is a nonzero binary quadratic form and \(\kappa\in\CC\) is a constant.  If \(\alpha=0\), division of
\eqref{eq:caseIII-factor-eq} by \(\chi^*\) gives
\[
 -a_-P_2Q_2^*+a_+\kappa^*\sigma^2=0.
\]
The same separated-rank comparison forces
\(\kappa=P_2=0\).  The coefficient of \(x^0\) in \eqref{eq:caseIII-H-kernel} then becomes
\[
 a_+\sigma^3
 =\chi\chi^*\bigl(B_2+a_-Q_2Q_2^*\bigr).
\]
This is impossible in the UFD
\(\CC[z_1,z_2,w_1,w_2]\), because the irreducible mixed factor
\(\sigma\) is not divisible by the pure factor \(\chi\).  Hence $\alpha>0.$ Equation \eqref{eq:caseIII-factor-eq} is now
\begin{equation}\label{eq:caseIII-factor-B}
 -a_-P_2Q_2^*+a_+\kappa^*\sigma^2
 =\alpha B_2.
\end{equation}
If \(P_2=0\), then positivity forces \(B_2\) to be a positive multiple of
\(\sigma^2\), so \cref{lem:caseIII-rank-facts}(iii) again contradicts
\eqref{eq:caseIII-rank4}.  Thus \(P_2\neq0\).

Subtract the Hermitian adjoint of \eqref{eq:caseIII-factor-B}.  The kernel
\(P_2Q_2^*-Q_2P_2^*\) has separated rank at most two, whereas
\(\sigma^2\) has separated rank three.  Therefore \(\kappa\in\mathbb R\),
and then $P_2Q_2^*=Q_2P_2^*.$ It follows that $Q_2=\tau P_2$ for some $\tau\in\mathbb R.$
Consequently
\[
 B_2=A_0\sigma^2-C_0P_2P_2^*,
 \qquad
 A_0=\frac{a_+\kappa}{\alpha},
 \qquad
 C_0=\frac{a_-\tau}{\alpha},
\]
where \(A_0,C_0\in\mathbb R\) are constants.
Since every nonzero binary quadratic has a zero on \(\PP^1\), the strict
positivity of \(B_2(z,\bar z)\) implies \(A_0>0\), hence
\(\kappa>0\).  If \(C_0\le0\), the coefficient matrix of \(B_2\) is
positive definite; \cref{lem:caseIII-rank-facts}(iii) would again give
rank six in contradiction with \eqref{eq:caseIII-rank4}.  Therefore
\(C_0>0\), and hence \(\tau>0\).

The coefficient of \(x^0\) in \eqref{eq:caseIII-H-kernel} now reduces to
\[
\begin{aligned}
0={}&(a_+-\alpha^2A_0)\sigma^3
 +(a_+\kappa^2-A_0)\chi\chi^*\sigma^2\\
&+(-a_-+\alpha^2C_0)\sigma P_2P_2^*
 +(-a_-\tau^2+C_0)\chi\chi^*P_2P_2^*.
\end{aligned}
\]
Restricting first to the irreducible divisor \(\sigma=0\) gives $C_0=a_-\tau^2.$ After substituting this equality, divide by \(\sigma\) and restrict again
to \(\sigma=0\).  This gives $a_-=\alpha^2C_0.$ Since \(\tau>0\), the two identities imply $\tau=\frac1\alpha.$ The remaining two terms are
\[
 (a_+-\alpha^2A_0)\sigma^3
 +(a_+\kappa^2-A_0)\chi\chi^*\sigma^2=0.
\]
After cancelling \(\sigma^2\), the kernels \(\sigma\) and
\(\chi\chi^*\) are linearly independent.  Hence
\[
 a_+=\alpha^2A_0,\qquad A_0=a_+\kappa^2.
\]
Because \(\kappa>0\), it follows that $\kappa=\frac1\alpha.$ Therefore $P_3=\frac{\chi}{\alpha}P_2$ and $P_1=\frac{\chi}{\alpha}.$ Substituting these identities into \eqref{eq:caseIII-H-normal} gives
\[
 H
 =\frac{\chi+\alpha x}{\alpha}
   \bigl(u_-P_2+u_+x^2\bigr)
 =\frac{\widehat\chi}{\alpha}
   \bigl(u_-P_2+u_+x^2\bigr).
\]
Thus \(\widehat\chi\mid H\).
\end{proof}

By \cref{lem:caseIII-cubic-factorization},
\eqref{eq:caseIII-factor-form} has the nonconstant common factor
\(\widehat\chi\), contradicting reducedness.  Hence Case III cannot
occur.  We have therefore excluded \(M\simeq\OO(3)\), and consequently
\begin{equation}\label{eq:rank2-L-bound}
 \deg\LL_0\ge3
\end{equation}
whenever the projective differential has generic rank two.

\subsubsection{Projective differential of rank one}

Assume now that the projective differential has generic rank one along
\(C\).  Let \(z\in C\) be general.  Choose affine source coordinates
centered at \(z\), write a tangent direction as \(v=(s,u)\), and choose a
constant target vector \(e_1\) whose class spans the image of the
projective differential.  After a linear change of the direction
coordinates we may arrange
\[
 D_vF\equiv s e_1\pmod{\CC F}.
\]
After splitting off the value direction and extending \(F,e_1\) to a
constant target basis, the remaining part of the special jet map is
represented by
\begin{equation}\label{eq:rank1-special-block}
 \Psi=
 \begin{pmatrix}
 s&0&0&0\\
 A_2&Q_{2,1}&Q_{2,2}&Q_{2,3}\\
 A_3&Q_{3,1}&Q_{3,2}&Q_{3,3}
 \end{pmatrix}
 \quad:\quad
 \OO^{\oplus4}\longrightarrow
 \OO(1)\oplus\OO(2)\oplus\OO(3).
\end{equation}
Here \(A_2,Q_{2,j}\) are binary quadratic forms and
\(A_3,Q_{3,j}\) are binary cubic forms in \((s,u)\).

The special jet map has generic rank three, so after removing the value
row the map \eqref{eq:rank1-special-block} has generic rank two.
The last three columns have zero \(\OO(1)\)-component.  Their generic
rank is therefore exactly one: it cannot be zero, while rank two would,
together with the first column, make \(\Psi\) generically of rank three.
Let
\[
 M\subset\OO(2)\oplus\OO(3)
\]
be the saturated image line of these three columns, and write
\(M\simeq\OO(d)\).  Since \(M\) is a line subbundle of
\(\OO(2)\oplus\OO(3)\), one has \(d\le3\).

Modulo \(M\), the first column has nonzero projection to \(\OO(1)\).
Let \(N\) denote the saturation of its image in the quotient.  Then
\(N\simeq\OO(e)\) with \(e\le1\), because the induced nonzero map
\(N\to\OO(1)\) forces \(e\le1\).  Consequently the saturated rank-two
image of \(\Psi\) has degree at most \(d+e\).  Using
\eqref{eq:J-on-Gamma}, the torsion-free special cokernel therefore
satisfies
\begin{equation}\label{eq:rank1-degree-bound}
 \deg\LL_0\ge6-(d+e).
\end{equation}
Thus \(\deg\LL_0\ge3\) unless
\begin{equation}\label{eq:rank1-extremal}
 d=3,\qquad e=1.
\end{equation}

Assume \eqref{eq:rank1-extremal}.  Then \(M\simeq\OO(3)\).  Since
\(\Hom(\OO(3),\OO(2))=0\), its \(\OO(2)\)-projection vanishes, and hence
\[
 Q_{2,1}=Q_{2,2}=Q_{2,3}=0.
\]
Moreover, \(N\simeq\OO(1)\) and the projection \(N\to\OO(1)\) is an
isomorphism up to a nonzero scalar.  It follows that there is a linear
form \(\alpha_1=\alpha_1(s,u)\) such that
\[
 A_2=s\alpha_1.
\]
Putting \(\varrho=\alpha_1/2\) and polarizing, we obtain the same
pure-trace identity \eqref{eq:pure-trace}.  Hence
\cref{lem:pure-trace-reduction} applies.  In the present rank-one case
the fixed space
\[
 V=\Span\{F(z),dF_z(T_z\PP^2)\}
\]
has dimension two, and if \(c=0\) is the irreducible equation of \(C\),
then
\begin{equation}\label{eq:rank1-c3-div}
 c^3\mid n(F)
 \qquad\text{for every }n\in\operatorname{Ann}(V).
\end{equation}
In particular, \cref{lem:pure-trace-reduction} already gives
\(\deg C=1\).

We claim that this line is Hermitian negative.  If
\(p\in C\cap\partial\B^2\), then \cref{lem:no-base} gives
\(F(p)\neq0\).  The boundary immersion argument used in the proof of
\cref{lem:C-negative} gives \(\rank d([F])_p=2\).  On the other hand,
\eqref{eq:rank1-c3-div} implies that the value vector and all first
derivatives of \(F\) at every point of \(C\) lie in the fixed
two-dimensional space \(V\), so \(\rank d([F])_p\le1\), a contradiction.
Thus \(C\cap\partial\B^2=\varnothing\).  Since the source Hermitian form
has no null vector on the two-plane whose projectivization is \(C\), its
restriction there is definite.  It cannot be positive definite because
the ambient form has positive index one.  Hence \(C\) is Hermitian
negative.

Choose source unitary coordinates
\[
 Z=(x,z_1,z_2),\qquad W=(y,w_1,w_2),\qquad C=(x=0),
\]
so that
\[
 (Z,W)_{1,2}=xy-\sigma(z,w),
 \qquad
 \sigma(z,w)=z_1w_1+z_2w_2.
\]
After removing the common divisor of the two components of \(F|_C\),
write
\begin{equation}\label{eq:rank1-F-on-C}
 F|_C=\chi\,Y,
 \qquad
 Y=(Y_1,Y_2)\in V,
 \qquad
 \gcd(Y_1,Y_2)=1.
\end{equation}
By \cref{lem:no-contracted-curve}, the projective map \([Y]\) is
nonconstant.  The restriction of the polarized incidence identity to
\(C\), interpreted on the Hermitian real form as above, gives the
orthogonality hypothesis of \cref{lem:U-nondegenerate}.  Hence the
Hermitian form on \(V\) is nondegenerate.

Let \(\operatorname{pr}_V\) be the Hermitian orthogonal projection onto
\(V\).  Fix a general \(z\in C\), and let \(w\in C\) be its polar point,
so that \(\sigma(z,w)=0\).  Then \((x,z)\) is incident to \((0,w)\) for
every \(x\), and the complexified identity gives
\[
 (F(x,z),Y^*(w))_{1,4}=0.
\]
Since \(V\) is a nondegenerate two-space, the kernel on \(V\) of the
functional \((\,\cdot\,,Y^*(w))_{1,4}\) is the line spanned by \(Y(z)\).
Therefore \(\operatorname{pr}_V F(x,z)\) is pointwise proportional to
\(Y(z)\).  The coprimality of \(Y_1,Y_2\) then gives a homogeneous scalar
form \(\omega=\omega(Z)\) such that
\[
 \operatorname{pr}_VF=\omega Y,
 \qquad
 \omega(0,z)=\chi(z).
\]
By \eqref{eq:rank1-c3-div}, every component of
\(F-\operatorname{pr}_VF\) is divisible by \(x^3\).  Hence
\begin{equation}\label{eq:rank1-normal-form}
 F=\omega Y+x^3H,
 \qquad
 H\in V^\perp,
\end{equation}
where \(H\) is a homogeneous cubic polynomial map.

Since \(\sigma\) is irreducible, the restricted incidence identity on
\(C\) gives a bihomogeneous form \(A=A(z,w)\) such that
\begin{equation}\label{eq:rank1-Y-pairing}
 (Y(z),Y^*(w))_{1,4}=\sigma(z,w)A(z,w).
\end{equation}
Substituting \eqref{eq:rank1-normal-form} into
\eqref{eq:complexified-zero}, the mixed terms vanish by orthogonality.
In the incidence domain \(xy=\sigma\) we obtain
\[
 0=\omega\omega^*\sigma A+x^3y^3(H,H^*)_{1,4}
   =\sigma\Bigl(\omega\omega^*A+x^2y^2(H,H^*)_{1,4}\Bigr).
\]
Cancelling \(\sigma\) gives
\begin{equation}\label{eq:rank1-pairing-reduced}
 \omega\omega^*A+x^2y^2(H,H^*)_{1,4}=0
\end{equation}
on the incidence hypersurface.  Setting \(x=y=0\), so that
\(\sigma(z,w)=0\), yields
\[
 \chi(z)\chi^*(w)A(z,w)=0
 \qquad\text{on }\sigma(z,w)=0.
\]
Neither \(\chi(z)\) nor \(\chi^*(w)\) is divisible by the mixed
irreducible form \(\sigma\).  Therefore \(\sigma\mid A\), and
\begin{equation}\label{eq:rank1-double-pairing}
 \sigma^2\mid (Y(z),Y^*(w))_{1,4}.
\end{equation}

Fix a general polar pair \((z,w)\).  Differentiating
\eqref{eq:rank1-double-pairing} in a local coordinate on the \(w\)-copy
of \(C\) shows that both \(Y^*(w)\) and its first derivative lie in the
kernel of \((Y(z),\,\cdot\,)_{1,4}\) on \(\overline V\).  This kernel is
one-dimensional because \(V\) is nondegenerate.  Hence the projective
derivative of \([Y^*]\) vanishes at a general point.  It follows that
\([Y]\) is constant, contradicting \cref{lem:no-contracted-curve}.
Thus the extremal case \eqref{eq:rank1-extremal} is impossible, and
\begin{equation}\label{eq:rank1-L-bound}
 \deg\LL_0\ge3
\end{equation}
whenever the projective differential has generic rank one.

\subsubsection{Projective differential of rank zero}

Finally, suppose that the projective differential has generic rank zero
along \(C\).  Then its restriction to the tangent direction of \(C\)
vanishes at a general smooth point.   Hence constant on \(C\)
as a rational map.  This contradicts
\cref{lem:no-contracted-curve}.  Therefore the generic projective
differential rank zero case does not occur.

\subsubsection{Conclusion of the corank-one case}

The three possible generic ranks of the projective differential have
now been exhausted.  The rank-two case gives
\eqref{eq:rank2-L-bound}, the rank-one case gives
\eqref{eq:rank1-L-bound}, and the rank-zero case is impossible.  Hence
\[
 \deg\LL_0\ge3
\]
for every determinant component on which the special-fiber jet map has
generic corank one.  By \eqref{eq:propagation-bound},
\begin{equation}\label{eq:corank-one-complete}
 \delta_C\ge m\deg\LL_0\ge3m.
\end{equation}
This proves \cref{prop:jet-defect} in the corank-one case.

\subsection{Higher-corank cases}\label{subsec:higher-corank}

Let $C$ be an irreducible component of $(\lambda)$ along which the
special-fiber jet map has generic corank at least two.  We work in fixed
affine source coordinates and use the corresponding local representative
of $F$.  At a general point $z\in C$, the rank assumption gives
\begin{equation}\label{eq:hc-directional-rank}
 \dim\Span\{F(z),D_vF(z),D_v^2F(z),D_v^3F(z)\}\le2
\end{equation}
for a general nonzero tangent vector $v\in T_z\PP^2$, where $D_v^jF(z)$
denotes differentiation in the constant direction $v$ in these coordinates.

Consider the differential
\[
 d[F]_z:T_z\PP^2\longrightarrow T_{[F(z)]}\PP^4
\]
of the projective map $[F]$.  Its rank at a general point of $C$ is
nonzero, since otherwise $[F]|_C$ would be constant, contrary to
\cref{lem:no-contracted-curve}.  Thus its generic rank along $C$ is
one or two.  In either case, $F(z)$ and $D_vF(z)$ are linearly independent
for general $v$, so equality holds in \eqref{eq:hc-directional-rank}.
We first establish a common reduction for these two cases.  Set
\[
 V_C(z)=\Span\{F(z),dF_z(T_z\PP^2)\}\subset\CC^5.
\]
It then follows from \eqref{eq:hc-directional-rank} that, for a general
$v\in T_z\mathbb P^2$,
\[
D_v^2F(z),\,D_v^3F(z)
\in \Span\{F(z),D_vF(z)\}\subset V_C(z).
\]
Consequently, their images in $\mathbb C^5/V_C(z)$ vanish for general
$v$. Since these images depend polynomially on $v$, they vanish
identically. Polarization then gives
\begin{equation}\label{eq:hc-mixed-derivatives}
d^jF_z(v_1,\ldots,v_j)\in V_C(z),
\qquad j=2,3,
\end{equation}
for all $v_1,\ldots,v_j\in T_z\mathbb P^2$. Here $d^jF_z$ denotes
the $j$-th differential of $F$ at $z$ in the chosen affine
coordinates.

The subspace $V_C(z)$ is constant on a dense open subset of $C$.
Indeed, it is spanned by the values of $F$,
$\partial F/\partial z_1$, and $\partial F/\partial z_2$.
Their derivatives in directions tangent to $C$ are linear
combinations of first and second coordinate derivatives of $F$,
and hence lie in $V_C(z)$ by \eqref{eq:hc-mixed-derivatives}.
Thus, on the locus where $\dim V_C(z)$ is constant, the
differential of the associated Grassmannian map vanishes.
Since $C$ is irreducible, this map is constant.
Denote the resulting fixed subspace by $V_C$.
Its dimension is one more than the generic rank of $d[F]$
along $C$, and is therefore two or three.

Let $c(z)$ be an irreducible homogeneous equation of $C$.  For every target
linear functional $\vartheta\in\operatorname{Ann}(V_{C})$, the
polynomial $\vartheta(F)$ and all its coordinate derivatives of order at
most three vanish at general smooth points of $C$.  Taylor expansion in
a local equation of $C$ thus gives
\begin{equation}\label{eq:hc-fourth-divisibility}
 c^4\mid\vartheta(F)
 \qquad\bigl(\vartheta\in\operatorname{Ann}(V_{C})\bigr).
\end{equation}
Since $c$ is irreducible, this is a global polynomial divisibility.
Linear fullness provides a nonzero such polynomial.  Its degree is six,
so $4\deg C\le6$.  Thus $C$ is a line.  In particular,
\eqref{eq:hc-fourth-divisibility} holds along the whole line, including
points omitted in the preceding generic-point arguments.

\begin{lemma}[The rank-one case]\label{lem:higher-rank-one-rigidity}
There is no component $C$ on which the special-fiber jet map has generic
rank two and $d[F]$ has generic rank one.
\end{lemma}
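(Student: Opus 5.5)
The plan is to rerun the extremal-case argument from the corank-one, rank-one subsection. The input there was the third-order divisibility \eqref{eq:rank1-c3-div}; here we have the stronger fourth-order divisibility \eqref{eq:hc-fourth-divisibility}. In the present situation $V_C$ is two-dimensional, since its dimension is one more than the generic rank of $d[F]$ along $C$. We already know that $C$ is a line and that $c^4\mid\vartheta(F)$ for every $\vartheta\in\operatorname{Ann}(V_C)$, a three-dimensional space of functionals.

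\emph{Step 1: $C$ is Hermitian negative.} Suppose $p\in C\cap\partial\B^2$. By \cref{lem:no-base}, $F(p)\neq0$, and the boundary immersion argument used in \cref{lem:C-negative} gives $\rank d([F])_p=2$. On the other hand, \eqref{eq:hc-fourth-divisibility} holds along the whole line. Hence $F(p)$ and all first derivatives of $F$ at $p$ lie in the two-dimensional space $V_C$, which forces $\rank d([F])_p\le1$, a contradiction. So $C$ misses $\partial\B^2$. The source form is then definite on the underlying two-plane, and because the positive index is one it must be negative definite. We then choose the coordinates \eqref{eq:negative-line-coords}–\eqref{eq:sigma-def} with $C=(x=0)$.

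\emph{Step 2: normal form.} Write $F|_C=\chi Y$ with $Y=(Y_1,Y_2)\in V_C$ and $\gcd(Y_1,Y_2)=1$. By \cref{lem:no-contracted-curve}, $[Y]$ is nonconstant. Restricting the incidence identity to $C$ and passing to Hermitian polar points gives the orthogonality hypothesis of \cref{lem:U-nondegenerate}, so $V_C$ is nondegenerate. We then argue exactly as before \eqref{eq:rank1-normal-form}. Pairing $F(x,z)$ with $Y^*(w)$ for a polar pair shows that $\operatorname{pr}_{V_C}F=\omega Y$ with $\omega(0,z)=\chi(z)$. By \eqref{eq:hc-fourth-divisibility} we obtain
\[
F=\omega Y+x^4H,\qquad H\in V_C^\perp,
\]
where $H$ is a quadratic form.

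\emph{Step 3: contradiction.} Write $(Y(z),Y^*(w))_{1,4}=\sigma A$. Substituting the normal form into \eqref{eq:complexified-zero} on $xy=\sigma$ kills the mixed terms and yields
\[
\omega\omega^*\sigma A+\sigma^4(H,H^*)_{1,4}=0.
\]
Cancelling $\sigma$ and setting $x=y=0$ gives $\chi\chi^*A=0$ on $\sigma=0$, so $\sigma\mid A$, and therefore $\sigma^2\mid(Y,Y^*)_{1,4}$. Now fix a general polar pair and differentiate along the $w$-copy of $C$. Both $Y^*(w)$ and its derivative then lie in the one-dimensional kernel of $(Y(z),\cdot)_{1,4}$ on $\overline{V_C}$. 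Hence $[Y]$ has vanishing projective derivative and is constant, which contradicts \cref{lem:no-contracted-curve}.

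The main obstacle is Step 1, where we import the boundary immersion claim $\rank d([F])_p=2$. We must check that it genuinely applies at every point of the line, including non-generic ones. This is exactly why the global polynomial form of \eqref{eq:hc-fourth-divisibility} is needed. The remaining steps are routine adaptations of the corank-one argument.
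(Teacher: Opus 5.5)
Your proposal is correct and follows the paper's proof essentially step for step. The paper makes $C$ negative by the same boundary-rank contradiction from \eqref{eq:hc-fourth-divisibility}, and it uses the same normal form $F=\omega Y+x^4H$, the same deduction $\sigma^2\mid(Y,Y^*)_{1,4}$, and the same differentiation forcing $[Y]$ to be constant; the only cosmetic difference is that the paper reproves nondegeneracy of $V_C$ inline by the radical argument, where you cite \cref{lem:U-nondegenerate}(i), which is legitimate once Step 1 has made $C$ negative.
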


\begin{proof}
In this case $\dim V_{C}=2$.

\emph{The line $C$ is negative.}
Suppose that $z_\partial\in C\cap\partial\B^2$.
By \cref{lem:no-base}, $F(z_\partial)\ne0$.
Equation \eqref{eq:hc-fourth-divisibility} implies that
$F(z_\partial)$ and its first coordinate derivatives lie in
the two-dimensional space $V_C$.  Hence
$\operatorname{rank}d[F]_{z_\partial}\le1$.
On the other hand, the Hopf lemma gives transversality to
the target sphere, while the Levi identity implies that
the differential is injective on the complex tangent line.
Thus $\operatorname{rank}d[F]_{z_\partial}=2$, a contradiction.

Therefore $C\cap\partial\B^2=\varnothing$.
The restriction of the source Hermitian form to the
two-dimensional vector subspace representing $C$ has no
nonzero null vector and is therefore definite.
Since the ambient form has positive index one, this
restriction must be negative definite.

Choose source coordinates preserving the Hermitian form so that
\[
 C=\{Z_0=0\},\qquad
 z=(Z_1,Z_2),\quad w=(W_1,W_2),\qquad
 \sigma(z,w)=Z_1W_1+Z_2W_2.
\]
The incidence equation is $Z_0W_0=\sigma$.
Write
\[
 F(0,z)=\chi_C(z)Y(z),\qquad Y(z)\in V_C,
\]
where $\chi_C$ is a homogeneous polynomial and the two
coordinates of $Y$ in a basis of $V_C$ are relatively prime.  
Then by \cref{lem:no-contracted-curve}, the projective map $[Y]$ is nonconstant.
Throughout the proof, pairings of independent variables are written with
$(\ ,\ )_{1,4}$, as in \eqref{eq:bilinear-source}; stars conjugate
coefficients, as in \eqref{eq:G-def}.

\emph{The Hermitian form on $V_C$ is nondegenerate.}
Suppose otherwise.  Its radical $\operatorname{rad}V_C$
is one-dimensional, since a Hermitian space of signature
$(1,4)$ has no two-dimensional totally isotropic subspace.
As $[Y]$ is nonconstant, $Y(z)\notin\operatorname{rad}V_C$
for general $[z]\in\PP^1$.
Let $[w]\in\PP^1$ be the unique point satisfying
$\sigma(z,w)=0$.  The polarized identity gives
\[
 (Y(z),Y^*(w))_{1,4}=0.
\]
The kernel of $(Y(z),\,\cdot\,)_{1,4}$ on $\overline{V_C}$
is one-dimensional and contains
$\overline{\operatorname{rad}V_C}$, so they coincide.
Thus $Y^*(w)\in\overline{\operatorname{rad}V_C}$.
Since $[z]\mapsto[w]$ is a projective isomorphism,
$[Y^*]$ is constant, contradicting the nonconstancy of $[Y]$.

Let $\operatorname{pr}_{V_C}$ be the Hermitian orthogonal
projection onto $V_C$.  For general $z,w$ with
$\sigma(z,w)=0$, the points $(Z_0,z)$ and $(0,w)$
satisfy the incidence relation for every $Z_0$.  Hence
\[
 (F(Z_0,z),Y^*(w))_{1,4}=0.
\]
The kernel of $(\,\cdot\,,Y^*(w))_{1,4}$ on $V_C$
is the line spanned by $Y(z)$.  Thus
$\operatorname{pr}_{V_C}F$ is proportional to $Y$.
Since the coordinates of $Y$ are relatively prime, there
is a homogeneous polynomial $\omega_C$ such that
$\operatorname{pr}_{V_C}F=\omega_CY$.
By \eqref{eq:hc-fourth-divisibility}, each component of
$F-\operatorname{pr}_{V_C}F$ is divisible by $Z_0^4$.
Therefore
\begin{equation}\label{eq:hc-rank-one-decomposition}
 F=\omega_CY+Z_0^4H_2,
 \qquad \omega_C(0,z)=\chi_C(z),
\end{equation}
where $H_2$ is a homogeneous quadratic polynomial map
with values in $V_C^\perp$.

\emph{The polarized pairing vanishes to order at least two.}
Since $\sigma$ is irreducible, the polarized identity on $C$
gives a bihomogeneous polynomial $A_C$ such that
\[
 (Y(z),Y^*(w))_{1,4}=\sigma A_C.
\]
Substituting \eqref{eq:hc-rank-one-decomposition} into
\eqref{eq:complexified-zero}, the mixed terms vanish by
orthogonality.  In the integral domain
$\CC[Z,W]/(Z_0W_0-\sigma)$, we obtain
\[
 0=\omega_C\omega_C^*\sigma A_C
   +\sigma^4(H_2,H_2^*)_{1,4}.
\]
Cancelling $\sigma$ and setting $Z_0=W_0=0$ gives
\[
 \chi_C(z)\chi_C^*(w)A_C(z,w)=0
 \qquad\text{on }\sigma=0.
\]
Neither $\chi_C(z)$ nor $\chi_C^*(w)$ is divisible by
$\sigma$, since each depends on only one group of variables.
As $\sigma$ is irreducible, it follows that $\sigma\mid A_C$.
Thus
\begin{equation}\label{eq:hc-double-pairing}
 \sigma^2\mid (Y(z),Y^*(w))_{1,4}.
\end{equation}

Fix a general pair $([z],[w])$ with $\sigma(z,w)=0$.
Keeping $z$ fixed, differentiate the pairing in a local
projective coordinate at $[w]$.
By \eqref{eq:hc-double-pairing}, both $Y^*(w)$ and its
derivative lie in the kernel of $(Y(z),\,\cdot\,)_{1,4}$
on $\overline{V_C}$.
This kernel is one-dimensional, so the differential of
$[Y^*]$ vanishes at a general point.
Hence $[Y^*]$, and therefore $[Y]$, is constant,
contrary to \cref{lem:no-contracted-curve}.
\end{proof}

\begin{lemma}[The rank-two case]\label{lem:fourth-contact}
There is no component $C$ on which the special-fiber jet map has generic
rank two and $d[F]$ has generic rank two.
\end{lemma}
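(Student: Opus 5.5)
We follow the pattern of \cref{lem:higher-rank-one-rigidity}, now with $\dim V_C=3$ and the fourth-order divisibility \eqref{eq:hc-fourth-divisibility}.

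\emph{Step 1: the line $C$ is Hermitian negative.} Suppose $C$ meets $\partial\B^2$, at a tangent point or along a secant. At such a point, \cref{lem:no-base} and the boundary immersion argument show that $d[F]$ has rank two, so there is no immediate rank contradiction. Instead we use \eqref{eq:hc-mixed-derivatives}: all second and third derivatives of $F$ along $C$ lie in the fixed three-space $V_C$. This is strictly stronger than the pure-trace data \eqref{eq:pure-trace} together with $c^3$-divisibility. Hence the hypotheses of \cref{prop:B-boundary-line-exclusion} hold, and even with $c^4$ in place of $c^3$. We invoke that proposition to exclude both the tangent and the secant configuration. Then, as in \cref{lem:C-negative}, $C$ is negative, and we choose coordinates with $C=(x=0)$ and incidence $xy=\sigma$.

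\emph{Step 2: a normal form.} Write $F|_C=\chi Y$ with $Y$ valued in $V_C$. The map $[Y]$ is nonconstant by \cref{lem:no-contracted-curve}. As in the Case analysis above, we obtain a two-plane $U=\Span\{Y(z),D_\tau Y(z)\}$, and \cref{lem:U-nondegenerate} shows that $U$ is nondegenerate. Projecting $F$ orthogonally onto $U$ gives $\pi_UF=\widehat\chi Y$. The transverse component lies in $V_C$ modulo $U$, and \eqref{eq:hc-fourth-divisibility} kills everything outside $V_C$ up to order $x^4$. The target is
\[
F=\widehat\chi Y+x\widetilde g\,e_\perp+x^4H_2,\qquad H_2\in V_C^\perp \text{ quadratic}.
\]
The key new point comes from \eqref{eq:hc-mixed-derivatives}: both $D_\tau D_xF$ and $D_x^2F$ lie in $V_C$. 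Comparing these with the pure-trace computation that gave \eqref{eq:half-Wronskian}, we get $g_C^2=\gamma\chi^2\operatorname{Wr}(Y)$, hence $\nu\in\{1,3,5\}$ by \cref{lem:degree-alternative}.

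\emph{Step 3: rerun Cases I--III with $x^3K$ replaced by $x^4H_2$.} In Cases I and II, the order comparison in $x,y$ again gives $\eta=0$ (resp.\ $q=0$) and forces $\beta=0$. The remainder then satisfies an identity of the form $(N,N^*)=0$ or $(N,N^*)=\mathrm{const}$. This makes $[F]$ linear or of degree at most four, contradicting degree six.

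For Case III, we substitute $y=\sigma/x$ into the analogue of \eqref{eq:caseIII-main-identity}, which now carries the term $\sigma^3(H_2,H_2^*)$. We need $\widetilde g=\widehat\chi\rho$, which follows as in \cref{lem:caseIII-normal-coeff}. After that, the analogue of \eqref{eq:caseIII-B-def} acquires an extra factor of $\sigma$, and the same separated-rank count gives the contradiction:
\begin{itemize}
\item $\sigma^3B$ with $\sigma\mid B$ would have separated rank at least five by \cref{lem:caseIII-rank-facts}(ii);
\item but \eqref{eq:caseIII-rank4} shows its separated rank is at most four.
\end{itemize}
Alternatively, we argue that $H_2$ must be divisible by $\widehat\chi$, as in \cref{lem:caseIII-cubic-factorization}, contradicting reducedness.

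\emph{Main obstacle.} We expect the hardest part to be Step 1, namely checking that the boundary-line exclusion of the appendix really applies under the hypotheses available here. Within Step 3, the delicate point is the exact bookkeeping of the extra power of $\sigma$ in Case III. There, the positivity at $p_+=[1:0:0]$ together with \cref{lem:caseIII-rank-facts}(iii) should close the argument quickly, since $H_2$ is only quadratic.
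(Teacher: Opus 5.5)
Your route differs from the paper's and is essentially sound, but one step is misjustified. The condition \eqref{eq:hc-mixed-derivatives} only says that second and third derivatives lie in $V_C$, which is the $c^4$-divisibility. It leaves the $V_C$-components of $D_vD_wF$ unconstrained. So it does \emph{not} imply the pure-trace identity \eqref{eq:pure-trace}, and your sentence ``this is strictly stronger than the pure-trace data'' is false. The same misattribution recurs when you derive \eqref{eq:half-Wronskian} in Step 2.

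What you need is the order-two part of \eqref{eq:hc-directional-rank}. For general $v$ one has $D_v^2F\in\Span\{F,D_vF\}$. Write $D_v^2F\equiv q_1(v)\partial_1F+q_2(v)\partial_2F\pmod{\CC F}$ with binary quadratics $q_i$. The condition gives $v_2q_1-v_1q_2=0$, hence $q_i=v_i\,\ell(v)$ for a linear form $\ell$. Polarizing gives \eqref{eq:pure-trace} with $\varrho=\ell/2$ (compare \eqref{eq:hc-second-model}).

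With this in place, everything the exceptional branch $M\simeq\OO(3)$ of \cref{subsec:corank-one} actually uses is available. That branch uses $M\simeq\OO(3)$ only to obtain $Q_{2,1}=Q_{2,2}=0$ and $-uA_2+sB_2=0$; here the whole residual block \eqref{eq:residual-rank2} vanishes identically. Hence the following all apply:
\begin{itemize}
\item \cref{lem:pure-trace-reduction};
\item \cref{prop:B-boundary-line-exclusion}, whose stated hypotheses are exactly these, which settles your ``main obstacle'';
\item \cref{lem:U-nondegenerate}, \cref{lem:degree-alternative}, \cref{lem:projection-normal-form}, and Cases I--III.
\end{itemize}
No genuine rerun is needed. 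The term $x^4H_2=x^3(xH_2)$ is a special case of the $x^3K$ normal form, and the Case I--III arguments hold for arbitrary $K$. Your Case III shortcut is correct but optional: the extra factor of $\sigma$ forces $\sigma^3\mid A_4+\langle e_\perp,e_\perp\rangle_{1,4}\rho\rho^*$, hence $\sigma\mid B_2$, against the rank bound from \eqref{eq:caseIII-rank4} and \cref{lem:caseIII-rank-facts}(ii).

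The paper argues differently. It uses the third-order part of \eqref{eq:hc-directional-rank} to show that $[F]$ agrees to order three along $C$ with a projective linear map. Combined with \eqref{eq:hc-fourth-divisibility}, this gives the global normal form $F=PL_0Z+c^4R_2$. A boundary point on $C$ is then excluded by Burns--Krantz rigidity and linear fullness. A negative line is excluded by a direct divisibility argument in the incidence ring, ending with $F=PL_0Z$.

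Your approach buys economy. It shows that the higher-corank rank-two case is subsumed by the already-treated corank-one exceptional branch, using only second-order data (pure trace and $c^3$-divisibility), so nothing new must be proved. The paper's argument is self-contained and independent of \cref{app:boundary-line-exclusion} and the $\nu\in\{1,3,5\}$ case analysis, at the cost of the extra third-order computation.
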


\begin{proof}
In this case $\dim V_C=3$.
We first construct a projective linear map whose third jet
agrees with that of $[F]$ along $C$.

\emph{A projective linear model to order three.}
Choose a linear projection $\pi:\CC^5\to V_C$ with
$\pi|_{V_C}=\operatorname{id}$.
Since $F$ and its first derivatives take values in $V_C$
along $C$, the projected map $[\pi F]$ has differential
of rank two at a general point of $C$.
Choose affine source coordinates $(\xi_1,\xi_2)$ with
$C=\{\xi_2=0\}$, and write $[\pi F]$ in affine target
coordinates as $\varphi$, with values in $\CC^2$.

Applying $\pi$ to \eqref{eq:hc-directional-rank} and
using the affine lift $(1,\varphi)$, we obtain
\[
 d^2\varphi(v,v)\wedge d\varphi(v)=0,
 \qquad
 d^3\varphi(v,v,v)\wedge d\varphi(v)=0
 \quad\text{on }C
\]
for every tangent vector $v$.
Here the change of lift is justified by
\cref{lem:gauge-invariance}.
Write $\varphi_i=\partial\varphi/\partial\xi_i$ and
$\varphi_{ij}=\partial^2\varphi/\partial\xi_i\partial\xi_j$.
After shrinking the neighborhood, $\varphi_1$ and
$\varphi_2$ are linearly independent.
Comparing coefficients in the quadratic identity gives
holomorphic functions $\alpha,\beta$ on $C$ such that
\begin{equation}\label{eq:hc-second-model}
 \begin{split}
 \varphi_{11}&=2\alpha\varphi_1,\\
 \varphi_{12}&=\beta\varphi_1+\alpha\varphi_2,\\
 \varphi_{22}&=2\beta\varphi_2
 \end{split}
 \qquad\text{on }C.
\end{equation}
Since these identities hold on $C=\{\xi_2=0\}$,
we differentiate them with respect to $\xi_1$.
For example,
\[
 \varphi_{112}
 =(\beta_1+3\alpha\beta)\varphi_1
  +(\alpha_1+\alpha^2)\varphi_2.
\]
Substituting the resulting expressions into the cubic
identity and comparing coefficients, we obtain
\begin{equation}\label{eq:hc-third-model}
 \alpha_1=\alpha^2,\qquad
 \beta_1=\alpha\beta,\qquad
 \varphi_{222}=6\beta^2\varphi_2
 \qquad\text{on }C.
\end{equation}

The equation $\varphi_{11}=2\alpha\varphi_1$ implies
that $\varphi(C)$ is locally contained in an affine line.
Let $u(\xi_1)$ be the coordinate expression of
$\varphi(\xi_1,0)$ in an affine coordinate on this line.
Then $u'\ne0$ and $\alpha=u''/(2u')$, so
$\alpha_1=\alpha^2$ becomes
\[
 \frac{u'''}{u'}
 -\frac32\left(\frac{u''}{u'}\right)^2=0,
\]
where primes denote differentiation with respect to $\xi_1$.
Thus $u$ is a fractional linear function.
After a projective change of target coordinates, we may
therefore assume that
\[
 \varphi(\xi_1,0)=(\xi_1,0).
\]
The directional identities remain valid in these coordinates.
Retaining the notation $\alpha,\beta$ for the corresponding
coefficients, we have $\alpha=0$ and $\beta_1=0$ on $C$.
Hence $\beta$ is constant, and
$\varphi_{12}=\beta\varphi_1$ gives
\[
 \varphi_2(\xi_1,0)=(\beta\xi_1+a_0,b_0),
 \qquad b_0\ne0,
\]
for constants $a_0,b_0$; the inequality follows from
the linear independence of $\varphi_1$ and $\varphi_2$.

It follows from \eqref{eq:hc-second-model} and
\eqref{eq:hc-third-model} that $\varphi$ and the projective
linear map
\begin{equation}\label{eq:hc-fractional-model}
 (\xi_1,\xi_2)\longmapsto
 \left(
 \frac{\xi_1+a_0\xi_2}{1-\beta\xi_2},
 \frac{b_0\xi_2}{1-\beta\xi_2}
 \right)
\end{equation}
have the same partial derivatives of total order at most
three on $C$, including their values.
Let $L_0:\CC^3\to\CC^5$ be a linear injection with image
$V_C$ whose projectivization represents this map in the
original homogeneous coordinates.

By \eqref{eq:hc-fourth-divisibility}, the image of $F$
in $\CC^5/V_C$ is divisible by $c^4$.
Together with the third-order agreement established above,
this implies
\[
 F\wedge L_0Z\equiv0\pmod{c^4}.
\]
This divisibility holds globally, since it holds at general
points of the irreducible curve $C$.

The components of $L_0Z$ have no common zero in $\PP^2$.
Thus, on $4C=\{c^4=0\}$, the local ratios
$F_i/(L_0Z)_i$, wherever the denominator is nonzero,
agree on overlaps and define a section of $\OO_{4C}(5)$.
Since $C$ is a line, the exact sequence
\[
 0\longrightarrow\OO_{\PP^2}(1)
 \xrightarrow{\ c^4\ }\OO_{\PP^2}(5)
 \longrightarrow\OO_{4C}(5)\longrightarrow0
\]
and the vanishing $H^1(\PP^2,\OO(1))=0$ show that this
section lifts to a homogeneous polynomial $P$ of degree five.
Consequently,
\begin{equation}\label{eq:hc-global-model}
 F(Z)=P(Z)L_0Z+c(Z)^4R_2(Z),
\end{equation}
where $R_2$ is a homogeneous quadratic polynomial map
with values in $\CC^5$.
If $c$ divided $P$, it would divide every component of $F$,
contrary to reducedness. Hence $c\nmid P$.

\emph{Excluding lines that meet the sphere.}
Suppose that $z_\partial\in C\cap\partial\B^2$.
Work in the standard affine chart $Z=(1,z)$.
By \cref{lem:no-base} and \eqref{eq:hc-global-model},
$P(1,z_\partial)\ne0$.
Since $c(1,z_\partial)=0$, the maps $[F]$ and $[L_0Z]$
have the same third jet at $z_\partial$.

We first show that $L_0$ preserves the source Hermitian
form up to a positive constant factor. Set
\[
 H_{L_0}(Z,W)=(L_0Z,L_0^*W)_{1,4}.
\]
Dividing \eqref{eq:hc-global-model} by $P$ near
$z_\partial$ gives
\[
 \frac{F(1,z)}{P(1,z)}
 =L_0(1,z)+O(\|z-z_\partial\|^4).
\]
The boundary identity therefore implies
\begin{equation}\label{eq:hc-boundary-contact}
 H_{L_0}\bigl((1,z),(1,\overline z)\bigr)
 =O(\|z-z_\partial\|^4),
 \qquad z\in\partial\B^2,\quad z\to z_\partial.
\end{equation}

We verify that \eqref{eq:hc-boundary-contact} forces
$H_{L_0}$ to be a real multiple of the source form.
After a unitary change of source coordinates, assume that
$z_\partial=(0,1)$ and write nearby points as
$(\zeta,1+\omega)$.
The Hermitian form $H_{L_0}(Z,\overline Z)$ then has
the expression
\[
 d_0+2\operatorname{Re}(d_1\zeta+d_2\omega)
 +d_3|\zeta|^2
 +2\operatorname{Re}(d_4\overline\zeta\omega)
 +d_5|\omega|^2,
\]
where $d_0,d_3,d_5$ are real.
On the sphere,
\[
 \omega=\sqrt{1-|\zeta|^2-t^2}-1+it,
 \qquad t\in\mathbb R.
\]
Comparing the constant and linear terms in
\eqref{eq:hc-boundary-contact} gives
$d_0=d_1=0$ and $d_2\in\mathbb R$.
Comparison of the quadratic terms gives
$d_4=0$ and $d_3=d_5=d_2$.
Thus
\[
 H_{L_0}(Z,\overline Z)
 =d_2\bigl(2\operatorname{Re}\omega
            +|\zeta|^2+|\omega|^2\bigr)
 =-d_2\langle Z,Z\rangle_{1,2}.
\]
Polarization yields
\begin{equation}\label{eq:hc-boundary-isometry}
 H_{L_0}(Z,W)=a(Z,W)_{1,2},
 \qquad a\in\mathbb R.
\end{equation}
Since $L_0$ is injective, $a=0$ would give a
three-dimensional totally isotropic subspace of $\CC^5$,
which is impossible in signature $(1,4)$.
Moreover, $a<0$ would give positive index two on
$L_0(\CC^3)$, whereas the target form has positive index
one. Hence $a>0$.

After rescaling $L_0$ and applying a target Hermitian
isometry, its projectivization is the standard inclusion
of $\B^2$ into $\B^4$.
Write the transformed map in affine coordinates as
$(g_\parallel,g_\perp)$, where both components take values
in $\CC^2$.
Then $g_\parallel:\B^2\to\B^2$ is holomorphic, and the
third-order agreement with $[L_0Z]$ gives
\[
 g_\parallel(z)=z+O(\|z-z_\partial\|^4).
\]
The Burns--Krantz boundary rigidity theorem
\cite{BurnsKrantz1994} implies $g_\parallel(z)=z$.
Consequently,
\[
 \|g_\perp(z)\|^2\le1-\|z\|^2.
\]
For $\|z\|<r<1$, the maximum principle on the ball of
radius $r$ gives $\|g_\perp(z)\|^2\le1-r^2$.
Letting $r\uparrow1$, we obtain $g_\perp=0$,
contrary to linear fullness.
Thus $C\cap\partial\B^2=\varnothing$, so $C$ is a negative line.

\emph{The linear model on a negative line.}
It remains to exclude the case where $C$ is a negative line.
Choose source coordinates preserving the Hermitian form so
that $C=\{Z_0=0\}$.
After rescaling its defining equation and adjusting $R_2$,
we may assume that $c(Z)=Z_0$.
Write
\[
 z=(Z_1,Z_2),\qquad w=(W_1,W_2),\qquad
 \sigma=Z_1W_1+Z_2W_2,
\]
so that $(Z,W)_{1,2}=Z_0W_0-\sigma$.

Substituting \eqref{eq:hc-global-model} into the polarized
identity gives, in the integral domain
$\CC[Z,W]/((Z,W)_{1,2})$,
\begin{equation}\label{eq:hc-model-polarization}
 \begin{split}
 0={}&PP^*H_{L_0}
 +c^4P^*(R_2,L_0^*W)_{1,4}\\
 &+(c^*)^4P(L_0Z,R_2^*)_{1,4}
 +c^4(c^*)^4(R_2,R_2^*)_{1,4}.
 \end{split}
\end{equation}
We now use polynomial divisibility to show that
$H_{L_0}$ is a positive constant multiple of the source form.

Modulo $c=Z_0$, the incidence ring becomes
\[
 \CC[Z_1,Z_2,W_0,W_1,W_2]/(\sigma),
\]
which is an integral domain.  The element $c^*=W_0$
is prime, since the further quotient by $W_0$ is
$\CC[z,w]/(\sigma)$, again an integral domain.

Since $c\nmid P$, both $P(0,z)$ and $P^*(0,w)$ are
nonzero. Their images in $\CC[z,w]/(\sigma)$ remain
nonzero: the polynomial $\sigma$ cannot divide a
nonzero polynomial depending only on $z$ or only on $w$.
Thus $W_0$ divides neither $P$ nor $P^*$ in the quotient
by $c$.
Reducing \eqref{eq:hc-model-polarization} modulo $c$ gives
\[
 PP^*H_{L_0}\in((c^*)^4).
\]
Primality of $c^*$ therefore implies
$(c^*)^4\mid H_{L_0}$ in this quotient.
The quotient retains the grading by $W$-degree.
Since $H_{L_0}$ has $W$-degree one, its image must vanish.
Hence
\[
 H_{L_0}\in\bigl((Z,W)_{1,2},c\bigr).
\]
Conjugating coefficients and interchanging $Z,W$ gives
\[
 H_{L_0}\in\bigl((Z,W)_{1,2},c^*\bigr).
\]

By bidegree, these two memberships give
\[
 H_{L_0}
 =a(Z,W)_{1,2}+c\ell_1(W)
 =a'(Z,W)_{1,2}+c^*\ell_2(Z),
\]
where $a,a'$ are constants and $\ell_1,\ell_2$ are
linear forms.
Reducing modulo $c$, and using the fact that $W_0$
does not divide $\sigma$, yields $a=a'$.
Thus $c\ell_1=c^*\ell_2$, and consequently
\begin{equation}\label{eq:hc-two-constants}
 H_{L_0}=a(Z,W)_{1,2}+bcc^*
\end{equation}
for a constant $b$.
Both $a$ and $b$ are real by Hermitian symmetry.

On the projective incidence threefold, the locus
$\{c=c^*=0\}$ is a smooth irreducible curve:
it is the graph in $C\times\overline C$ of the
projective correspondence defined by $\sigma=0$.
Since $c\nmid P$, the product $PP^*$ is nonzero
at a general point of this curve.
After local trivialization, $PP^*$ is therefore a unit,
and $c,c^*$ form part of a regular system of parameters.

Substituting \eqref{eq:hc-two-constants} into
\eqref{eq:hc-model-polarization} gives
\[
 bPP^*cc^*\in(c^4,(c^*)^4).
\]
Since $cc^*\notin(c^4,(c^*)^4)$ in this local ring,
we obtain $b=0$.
The signature argument above then gives $a>0$.
After rescaling $L_0$ and adjusting $P$, we may assume
that $L_0$ is an isometric inclusion.
It follows that $V_C^\perp$ is negative definite.

\emph{Eliminating the quadratic remainder.}
Using the orthogonal decomposition
$\CC^5=V_C\oplus V_C^\perp$, write
\[
 R_2=L_0a_2+R_2^\perp,
\]
where $a_2$ and $R_2^\perp$ are homogeneous quadratic
polynomial maps with values in $\CC^3$ and $V_C^\perp$,
respectively.

Since $H_{L_0}=(Z,W)_{1,2}$, reducing
\eqref{eq:hc-model-polarization} modulo $c$ gives
\[
 (c^*)^4P\,(Z,a_2^*(W))_{1,2}=0
\]
in $\CC[Z,W]/((Z,W)_{1,2},c)$.
This ring is an integral domain, and both $P$ and $c^*$
have nonzero images. Hence
\[
 (Z,a_2^*(W))_{1,2}\in\bigl((Z,W)_{1,2},c\bigr).
\]
Comparing bidegrees, we obtain a linear form $\ell$
and a quadratic form $B_2$ such that
\[
 (Z,a_2^*(W))_{1,2}
 =(Z,W)_{1,2}\ell^*(W)+c(Z)B_2^*(W).
\]
Let $n\in\CC^3$ be determined by
$(n,W)_{1,2}=c^*(W)$.
Conjugating coefficients and interchanging $Z,W$ yields
\[
 a_2(Z)=\ell(Z)Z+B_2(Z)n.
\]
Absorbing $c^4\ell$ into $P$ in
\eqref{eq:hc-global-model}, we may therefore write
\[
 F=PL_0Z+c^4\bigl(B_2L_0n+R_2^\perp\bigr).
\]
We retain the notation $P$ for the new quintic;
the condition $c\nmid P$ is unchanged.

Define
\[
 Q_N(u,v)=-(u,v)_{1,4},
 \qquad
 u\in V_C^\perp,\quad v\in\overline{V_C^\perp}.
\]
Since $V_C^\perp$ is negative definite,
$Q_N(u,\overline u)>0$ for $u\ne0$.
Substitution into the polarized identity gives
\begin{equation}\label{eq:hc-final-remainder}
 \begin{split}
 0=cc^*\bigl[&
 c^3P^*B_2+(c^*)^3PB_2^*\\
 &+c^3(c^*)^3\bigl(
 \langle n,n\rangle_{1,2}B_2B_2^*
 -Q_N(R_2^\perp,(R_2^\perp)^*)\bigr)\bigr]
 \end{split}
\end{equation}
in the incidence ring.

Cancelling $cc^*$ and reducing modulo $c$, we obtain
\[
 (c^*)^3PB_2^*=0.
\]
Since the quotient by $c$ is an integral domain and
$(c^*)^3P$ is nonzero there, $B_2^*$ vanishes in this
quotient. The natural map
\[
 \CC[W]\longrightarrow
 \CC[Z,W]/((Z,W)_{1,2},c)
\]
is injective, so $B_2=0$ as a polynomial.

Equation \eqref{eq:hc-final-remainder}, followed by
cancellation of $c^4(c^*)^4$ in the incidence ring, now gives
\[
 Q_N(R_2^\perp(Z),(R_2^\perp)^*(W))=0
 \qquad\text{whenever }(Z,W)_{1,2}=0.
\]
Taking $W=\overline Z$ on the source null cone and using
positive definiteness, we obtain $R_2^\perp(Z)=0$ there.
Each component therefore vanishes on the source sphere
in the affine chart $Z_0=1$, and the maximum principle
implies that it vanishes throughout the ball.
Hence $R_2^\perp$ is identically zero.

Thus $F=PL_0Z$. Since $P$ is homogeneous of degree five,
it is a nonconstant common factor of all components of $F$,
contrary to reducedness.
\end{proof}

\subsection{Completion of the local estimate}\label{subsec:completion}

We now consider all possible generic ranks of the special-fiber
jet map and of $d[F]$ along $C$. If the special-fiber jet map has generic corank one,
\cref{subsec:corank-one} gives
\[
 \delta_C\ge3\ord_C\lambda.
\]
If the special-fiber jet map has generic corank at least two,
the reduction in \cref{subsec:higher-corank} shows that its
generic rank is exactly two and that the generic rank of
$d[F]$ along $C$ is either one or two.
These two cases are excluded by
\cref{lem:higher-rank-one-rigidity} and
\cref{lem:fourth-contact}, respectively.

Hence only the corank-one case can occur.
Since $C$ was arbitrary, this proves \cref{prop:jet-defect}.

\appendix

\section{Auxiliary lemmas}\label{app:auxiliary-lemmas}

\subsection{A two-plane polarity lemma}\label{subsec:app-two-plane-polarity}

The following elementary observation is used several times in
\cref{sec:local-proof}.  

\begin{lemma}[Two-plane polarity]\label{lem:U-nondegenerate}
Let \(C=\PP(\Lambda)\subset\PP^2\) be a Hermitian negative line, so that
the source polarity \(\iota_C:C\to C\) is fixed-point-free.  Let
\(E\subset\CC^5\) be a two-dimensional target subspace, and let
\[
 Y=(Y_1,Y_2):C\simeq\PP^1\longrightarrow\PP(E)
\]
be a nonconstant map represented by coprime homogeneous forms of the same
degree \(\nu\).  Assume that
\begin{equation}\label{eq:appendix-polarity-orthogonality}
 \langle Y(z),Y(\iota_C(z))\rangle_{1,4}=0
 \qquad\text{for every }z\in C.
\end{equation}
Then:
\begin{enumerate}[label=\textup{(\roman*)}]
\item the Hermitian form $\langle\ ,\ \rangle_{1,4}$ on \(E\) is nondegenerate, and the induced target
polarity \(\iota_E\) satisfies
\[
 [Y]\circ\iota_C=\iota_E\circ[Y];
\]
\item if \(\nu\) is odd, then the Hermitian form $\langle\ ,\ \rangle_{1,4}$ on \(E\) is negative
definite.
\end{enumerate}
\end{lemma}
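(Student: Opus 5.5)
For (i) I would argue by contradiction, exactly as in the nondegeneracy step of \cref{lem:higher-rank-one-rigidity}. Suppose the restriction of $\langle\ ,\ \rangle_{1,4}$ to $E$ is degenerate. A Hermitian space of signature $(1,4)$ has no totally isotropic two-plane, so the radical $\operatorname{rad}E$ is a line. Since $[Y]$ is nonconstant, $Y(z)\notin\operatorname{rad}E$ for general $z$. The functional $\langle Y(z),\cdot\,\rangle_{1,4}$ on $E$ is then nonzero, and its kernel is one-dimensional and contains $\operatorname{rad}E$. Hence the kernel equals $\operatorname{rad}E$.

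By \eqref{eq:appendix-polarity-orthogonality}, $Y(\iota_C(z))$ lies in this kernel, so $Y(\iota_C(z))\in\operatorname{rad}E$ for general $z$. Because $\iota_C$ is a bijection of $C$, this makes $[Y]$ constant, which is a contradiction.

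Once $E$ is nondegenerate, the target polarity $\iota_E([u])=\PP(E\cap u^{\perp_{1,4}})$ is well defined. For each $z$, the nonzero vector $Y(\iota_C(z))$ is orthogonal to $Y(z)$ inside the two-plane $E$. It therefore spans the orthogonal complement of $Y(z)$ in $E$, which gives $[Y]\circ\iota_C=\iota_E\circ[Y]$ at every $z$ with $Y(z)$ nonisotropic. The isotropic locus is finite, or else all of $C$; the latter would force $[Y]$ into the finitely many null points of $\PP(E)$, i.e.\ $[Y]$ constant. So the identity holds on a dense set, and by continuity everywhere.

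For (ii), suppose $\nu$ is odd. Since $E$ is nondegenerate of dimension two in signature $(1,4)$, its signature is either $(0,2)$ or $(1,1)$, and I would exclude $(1,1)$. In that case the null points of $\PP(E)$ form a circle $S\subset\PP(E)\simeq\PP^1$, and $\iota_E$ is the reflection across $S$, an antiholomorphic involution fixing $S$ pointwise. On the source side, $\iota_C$ is the antipodal map, which is fixed-point-free, and $C$ is a sphere. By (i), $[Y]$ maps $\iota_C$-orbits to $\iota_E$-orbits. Since $\iota_E$ fixes $S$ pointwise, every point of $Y^{-1}(S)$ is sent by $\iota_C$ back into $Y^{-1}(S)$. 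The preimage $Y^{-1}(S)$ is a real-analytic curve, which is nonempty because $[Y]$ is surjective, and it is invariant under the antipodal map.

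I would then count degree: $Y^{-1}(S)$ is a union of closed curves separating $C$ into $Y^{-1}(D_+)$ and $Y^{-1}(D_-)$, where $D_\pm$ are the two disks complementary to $S$, each of which is mapped properly onto its disk with degree $\nu$. The involution $\iota_C$ interchanges these two preimage regions, since $\iota_E$ swaps $D_+$ and $D_-$.

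The cleaner route I would actually try is a lifting argument. Choose an orientation-reversing conjugation $\theta_C$ on $C\simeq\PP^1$ commuting with $\iota_C$, and set $\phi_C=\theta_C\circ\iota_C$, a holomorphic fixed-point-free involution. Similarly set $\phi_E=\theta_E\circ\iota_E$, with $\theta_E$ the reflection across $S$, so that $\phi_E=\mathrm{id}$. This does not directly help, so I would fall back on the following parity argument.

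Consider the map $[Y]$ restricted to $Y^{-1}(S)\to S$, which has degree $\nu$ as a map of oriented circles, with boundary orientation taken from $Y^{-1}(D_+)$. The antipodal map $\iota_C$ reverses the orientation of $C$ and maps $Y^{-1}(D_+)$ onto $Y^{-1}(D_-)$. It therefore preserves the induced boundary orientation on $Y^{-1}(S)$, while $\iota_E$ acts as the identity on $S$. So the curve system $Y^{-1}(S)$ descends to the Möbius band side, i.e.\ to the quotient $C/\iota_C=\mathbb{RP}^2$, and $[Y]$ induces a map from this quotient curve to $S$ of degree $\nu/2$. This is impossible for odd $\nu$, and that contradiction excludes signature $(1,1)$.

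I expect this topological step to be the main obstacle. In particular, one must verify that $\iota_C$ acts freely on $Y^{-1}(S)$ and that $Y^{-1}(S)$ is a smooth curve for a suitable choice, perturbing if necessary. As a backup, I would compare with \cref{lem:degree-alternative}: in the $(1,1)$ case, take the ramification of $[Y]$ together with the fixed locus of $\iota_E$, and attempt a direct Riemann--Hurwitz parity count showing that the number of preimage components over $S$ must be even.
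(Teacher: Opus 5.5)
Your part (i) is the paper's argument. The radical is a line, and for general $z$ the kernel of $\langle Y(z),\cdot\,\rangle_{1,4}$ on $E$ is that radical. Hence $[Y]\circ\iota_C$, and therefore $[Y]$, is constant.

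One side remark in (i) is wrong, though harmless. In signature $(1,1)$ the null points of $\PP(E)$ form a circle, not a finite set. The isotropic locus $\{z:\langle Y(z),Y(z)\rangle_{1,4}=0\}$ is its preimage, a real curve, so the dichotomy ``finite or all of $C$'' fails. You do not need it. For every nonzero $u\in E$ the space $u^{\perp_{1,4}}\cap E$ is a line; when $u$ is null it equals $\CC u$, and $\iota_E$ fixes $[u]$. Since $Y(\iota_C(z))\neq0$ lies in this line, $[Y]\circ\iota_C=\iota_E\circ[Y]$ holds at every point, with no density or continuity step.

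Part (ii) has a genuine gap: you never complete an argument. Your orientation-and-quotient route treats $[Y]^{-1}(S)\to S$ as a map of smooth oriented circles. But $[Y]^{-1}(S)$ is singular whenever a branch value of $[Y]$ lies on $S$. You also cannot perturb $S$, because the argument uses that $\iota_E$ fixes $S$ pointwise, and no other circle has that property.

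The missing idea is that one point of $S$ suffices. $S$ is infinite and $[Y]$ has finitely many branch values, so pick $q\in S$ that is a regular value. Its fiber has exactly $\nu$ points. If $[Y](p)=q$, then $[Y](\iota_C(p))=\iota_E(q)=q$, so the fixed-point-free involution $\iota_C$ preserves the fiber and splits it into pairs. Hence $\nu$ is even. This is the paper's proof. Your degree of $[Y]^{-1}(S)\to S$, evaluated at a regular value, reduces to exactly this count, so the boundary orientations and the passage to $\mathbb{RP}^2$ are unnecessary.

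Two further points. The lifting step is false as stated: every holomorphic automorphism of $\PP^1$ has a fixed point, so $\theta_C\circ\iota_C$ cannot be a fixed-point-free holomorphic involution. The Riemann--Hurwitz backup also does not apply to this lemma. The even ramification used in \cref{lem:degree-alternative} comes from \eqref{eq:Wronskian-square}, which is not a hypothesis here. Without it, the $\iota_C$-pairing of the ramification divisor only shows that $2\nu-2$ is even, which is automatic.
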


\begin{proof}
Suppose first that the Hermitian form $\langle\ ,\ \rangle_{1,4}$ on \(E\) is degenerate, and let
\(R=\operatorname{Rad}(E)\), a one-dimensional subspace.  Since \([Y]\) is nonconstant,
\(Y(z)\notin R\) for general \(z\).  In a two-dimensional degenerate
Hermitian space, the orthogonal complement of any vector outside the
radical is exactly \(R\).  Hence
\eqref{eq:appendix-polarity-orthogonality} implies
\(Y(\iota_C(z))\in R\) for general \(z\).  Because \(\iota_C\) is a
surjective involution, this forces the image of \([Y]\) to be the
single point \(\PP(R)\), a contradiction.  This proves nondegeneracy.

For a nondegenerate two-plane \(E\) in a Hermitian space of signature
\((1,4)\), the restriction has either signature \((1,1)\) or is negative
definite; positive definiteness is impossible because the ambient positive
index is one.  The orthogonality relation
\eqref{eq:appendix-polarity-orthogonality} now identifies the unique polar
line of \(Y(z)\) in \(E\) with the image of the source polar point, giving
\[
 [Y]\circ\iota_C=\iota_E\circ[Y].
\]

Assume that \(E\) has signature \((1,1)\).  Then \(\iota_E\) has fixed
points, namely the null lines in \(\PP(E)\).  Choose a fixed point \(q\)
that is not a branch value of the degree-\(\nu\) map \([Y]\).  The fiber
\([Y]^{-1}(q)\) consists of \(\nu\) distinct points.  By the equivariance
above, this fiber is invariant under the fixed-point-free involution
\(\iota_C\), so its points occur in pairs.  Hence \(\nu\) is even.
Therefore, if \(\nu\) is odd, the signature-\((1,1)\) case is impossible,
and \(E\) must be negative definite.
\end{proof}

\subsection{Local line-preserving alternative}\label{subsec:app-line-preserving}

\begin{lemma}[Local line-preserving alternative]\label{lem:line-preserving}
Let \(\Theta:U_0\to\PP^2\) be holomorphic on a connected open set
\(U_0\subset\PP^2\).  Suppose that, through every point of a nonempty open
subset of \(U_0\), an open set of source line germs is mapped into
projective lines.  Assume in addition that the polarity relation
\eqref{eq:caseII-Theta-polarity} holds on the corresponding open incidence
set.  Then either \(\Theta\) is constant, or \(\Theta\) agrees on a
nonempty open set with a projective linear transformation.
\end{lemma}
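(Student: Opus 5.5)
We have to prove Lemma (local line-preserving alternative). The plan is to reduce to the classical fact that a local holomorphic map of the plane sending an open family of line germs into lines is either degenerate or projective linear, and to use the polarity relation \eqref{eq:caseII-Theta-polarity} to rule out the degenerate alternatives.

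\emph{Step one: the image has dimension zero, one, or two.} Let $r$ be the generic rank of $d\Theta$ on $U_0$.

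If $r=0$, then $\Theta$ is constant on the connected set $U_0$, and we are done.

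If $r=1$, then the image of a small open set is a curve germ $\Sigma$. For fixed $W$, relation \eqref{eq:caseII-Theta-polarity} says that $\Theta(L_W)$ lies in the projective line $\ell_W$ dual to $\Theta^*(W)$ under $\mathcal H$. Since $L_W$ sweeps an open set and $\Theta$ has rank one, each $\Theta(L_W)$ is an open piece of $\Sigma$. Hence $\Sigma\subset\ell_W$, so $\Sigma$ is an open piece of a fixed line $\ell$. It follows that all the dual points $\Theta^*(W)$ coincide with the $\mathcal H$-pole of $\ell$. Then $\Theta^*$ is constant on an open set, hence $\Theta$ is constant, contradicting $r=1$. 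So this case cannot occur.

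\emph{Step two: the rank-two case, the main step.} Assume $r=2$. Shrink to a ball $U_1$ on which $\Theta$ is a biholomorphism onto its image. Through each point $p\in U_1$ there is an open set of directions whose line germs are mapped into lines. I would follow the classical argument for the fundamental theorem of projective geometry in its local (Beltrami-type) form.

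Write $\Theta$ in affine coordinates $\varphi=(\varphi^1,\varphi^2)$. The condition that the image of the line $p+sv$ lies in a line is
\[
d^2\varphi_p(v,v)\wedge d\varphi_p(v)=0.
\]
This is a cubic polynomial identity in $v$ that holds on an open set of $v$, hence for all $v$. Exactly as in the derivation of \eqref{eq:hc-second-model}, but now on an open set rather than on a curve, we get holomorphic functions $\alpha,\beta$ with
\[
\varphi_{11}=2\alpha\varphi_1,\qquad
\varphi_{12}=\beta\varphi_1+\alpha\varphi_2,\qquad
\varphi_{22}=2\beta\varphi_2.
\]
Thus $\varphi$ solves the projective-flatness system $\nabla d\varphi=(\theta\otimes d\varphi)^{\mathrm{sym}}$ with $\theta=2(\alpha\,d\xi_1+\beta\,d\xi_2)$.

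Next differentiate and impose the compatibility $\varphi_{112}=\varphi_{121}$ and $\varphi_{122}=\varphi_{221}$. Using the independence of $\varphi_1$ and $\varphi_2$, this yields
\[
\alpha_1=\alpha^2,\qquad \alpha_2=\beta_1=\alpha\beta,\qquad \beta_2=\beta^2.
\]
Therefore $d\theta=0$ and $\theta$ is the differential of $-2\log\mu$, with $\mu$ affine linear. Concretely, $1/\mu$ satisfies $\partial^2(1/\mu)=0$ when the system is rewritten for the lift $\mu\cdot(1,\varphi)$.

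The system then says that every component of the lift $\mu(1,\varphi)$ has vanishing Hessian. So each component is affine linear, and $\Theta$ is projective linear on $U_1$. Since $\Theta$ is a local biholomorphism, the resulting matrix is invertible.

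I expect this compatibility computation to be the main obstacle. The risk is degeneracy at points where the three identities only hold on the open direction set; I would handle this by the polynomial-in-$v$ argument above.

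\emph{Step three: remaining role of the polarity relation.} The polarity relation was used in Step one to kill the rank-one alternative, and nothing further is needed. Combining the three cases gives exactly the stated dichotomy: $\Theta$ is constant, or it agrees on a nonempty open set with a projective linear transformation.
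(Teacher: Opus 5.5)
Your proposal is correct, but in the main step (generic rank two) you take a different route from the paper; your rank-one step is essentially the paper's.

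\textbf{Rank two.} The paper argues synthetically. It normalizes $\Theta(0)=0$ and $d\Theta_0=I$. Lines through the origin then go into themselves, so $\Theta(z)=\mu(z)z$. Line preservation on a secant through $a$ and $b$ gives
\[
 1/\mu((1-t)a+tb)=(1-t)/\mu(a)+t/\mu(b),
\]
so $1/\mu$ is affine and $\Theta$ is projective linear. You instead work infinitesimally:
\begin{itemize}
\item The cubic identity $d^2\varphi(v,v)\wedge d\varphi(v)=0$ extends the open set of good directions to all directions.
\item This gives the Christoffel-type system, which is the open-set analogue of \eqref{eq:hc-second-model}.
\item Its integrability conditions are $\alpha_1=\alpha^2$, $\alpha_2=\beta_1=\alpha\beta$, $\beta_2=\beta^2$. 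I checked them; they are right.
\item Hence $\alpha\,d\xi_1+\beta\,d\xi_2=-d\log\mu$ with $\mu$ affine, and the lift $\mu(1,\varphi)$ has vanishing Hessian.
\end{itemize}

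\textbf{What each route buys.} The paper's argument is shorter and needs no computation. However, it tacitly uses the hypothesis along lines through the base point and along secants $[a,b]$. With only an open set of good directions at each point, that takes some bookkeeping. Your argument is longer but purely local. It handles the ``open set of directions'' hypothesis explicitly through polynomiality in $v$. It also reuses the same machinery the paper develops in \cref{lem:fourth-contact}, where the system is only available along a curve.

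\textbf{Rank one.} Your argument is slightly streamlined. The paper first uses line preservation to put the image curve in a fixed line $L'$. It then splits according to whether $L'\cap\ell_W$ is a point or a line. You apply the polarity relation directly, using that a general polar line is transverse to the fibres of $\Theta$. This merges the paper's two subcases.

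\textbf{Two small slips.}
\begin{itemize}
\item The sentence ``$1/\mu$ satisfies $\partial^2(1/\mu)=0$'' contradicts your correct statement that $\mu$ is affine. From $d\log\mu=-(\alpha\,d\xi_1+\beta\,d\xi_2)$ and the Riccati relations one gets $\mu_{ij}=0$. So $\mu$, the denominator of the projective linear map, is affine; $1/\mu$ is affine only in the paper's radial normalization.
\item ``Each $\Theta(L_W)$ is an open piece of $\Sigma$'' should say ``for general $W$''. A polar line contained in a fibre of $\Theta$ has a point as its image.
\end{itemize}
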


\begin{proof}
Suppose first that \(\rank d\Theta=2\) somewhere. After some projective linear changes, we may assume
\[
 \Theta(0)=0,\qquad d\Theta_0=I.
\]
Let $z$ be a local affine coordinate chart of $U_0$. The above derivative together with the line-preserving assumption gives
\[
 \Theta(z)=\mu(z)z
\]
for a nowhere-zero holomorphic function \(\mu\).
Also, by the line-preserving property, for
\(z=(1-t)a+tb\), we have
\[
 \frac1{\mu(z)}
 =\frac{1-t}{\mu(a)}+\frac{t}{\mu(b)}.
\]
Hence \(\mu^{-1}\) is affine linear, which is precisely the
affine expression of a projective linear transformation.

Now suppose \(\rank d\Theta\le1\) everywhere and \(\Theta\) is
nonconstant.  Its image is an analytic curve.  If this curve were
not contained in a projective line, then its intersection with a general
target line would be discrete.  Since each source line germ is mapped into a target line, the restriction of \(\Theta\)
to every such germ would be constant, forcing \(d\Theta=0\), a
contradiction.  Thus the image is contained in a fixed projective line
\(L'\).  For the corresponding open family of polar lines,
\eqref{eq:caseII-Theta-polarity} gives
\[
 \Theta(U_0\cap L_W)
 \subset
 L'\cap\bigl\{q\in\PP^2:\mathcal H(q,\Theta^*(W))=0\bigr\}.
\]
If this intersection is a point for general \(W\), then \(\Theta\) is
constant on an open family of line germs and again \(d\Theta=0\).  If the
intersection is a line for general \(W\), then the complexified polar
hyperplane \(\{q:\mathcal H(q,\Theta^*(W))=0\}\) equals \(L'\) on a dense
open set, which also forces \(\Theta\) to be constant.  This proves the
alternative.
\end{proof}

\subsection{Proof of the separated-rank estimates}\label{subsec:app-separated-rank}

\begin{proof}[Proof of \cref{lem:caseIII-rank-facts}]
For (i), note that \(\sigma\) is irreducible in the UFD
\(\CC[z_1,z_2,w_1,w_2]\).  A nonzero product \(a(z)b(w)\) has no mixed
irreducible factor, so it cannot be divisible by \(\sigma\).

For (ii), let \(Q\ge q\) be maximal such that \(\sigma^Q\mid P\).
Choose \(w_0\) so that \(\sigma(z,w_0)\) does not divide
\((P/\sigma^Q)(z,w_0)\), and choose a \(w\)-direction \(\eta\) such that
\(\left.\partial_\eta\sigma(z,w)\right|_{w=w_0}\) is not proportional to
\(\sigma(z,w_0)\).  For \(0\le j\le Q\), consider the polynomials
\[
 \left.\partial_\eta^jP(z,w)\right|_{w=w_0}.
\]
It is elementary that $\sigma(z,w_0)^{Q-j}|\left.\partial_\eta^jP(z,w)\right|_{w=w_0}$ but $\sigma(z,w_0)^{Q+1-j}$ does not divide it.
So they are linearly independent.  Thus
the separated rank is at least \(Q+1\ge q+1\).

For (iii), write
\[
B_2(z,w)
=
\sum_{p,q=0}^{2}
b_{pq}\,
z_1^{2-p}z_2^p
w_1^{2-q}w_2^q,
\]
and let \(B=(b_{pq})\), which is positive definite by assumption. Since
\[
\sigma^3
=
z_1^3w_1^3
+3z_1^2z_2w_1^2w_2
+3z_1z_2^2w_1w_2^2
+z_2^3w_2^3,
\]
write
\[
\sigma^3B_2
=
\sum_{r,s=0}^{5}
c_{rs}\,
z_1^{5-r}z_2^r
w_1^{5-s}w_2^s,
\]
and let \(C=(c_{rs})\). One may check by definition that \(C\) is positive definite and therefore has rank six.
Since the separated rank of a bihomogeneous polynomial equals the rank of
its coefficient matrix, \(\sigma^3B_2\) has separated rank six.
\end{proof}

\section{Boundary-line exclusion in the corank-one case}\label{app:boundary-line-exclusion}

This appendix supplies the boundary-line exclusion needed in
\cref{lem:C-negative}.  We retain the reduced degree-six lift $F$,
the polarized sphere identity, and the notation of
\cref{subsec:corank-one}.  The only conclusions from the exceptional branch
that we use are the pure-trace identity \eqref{eq:pure-trace} and the
conclusions of \cref{lem:pure-trace-reduction}: $C=(c=0)$ is a projective
line, there are fixed spaces $U\subset V\subset\CC^5$ of dimensions two and
three, respectively, and
\begin{equation}\label{eq:B-initial-data}
 F|_C\in U,\quad
 \Span\{F(z),dF_z(T_z\PP^2)\}=V\quad(z\in C\text{ general}),\quad
c^3\mid\vartheta(F)\quad(\vartheta\in\operatorname{Ann}(V)).
\end{equation}
In particular, no conclusion about the Hermitian type of $C$ is assumed.
The restriction $[F]|_C$ is nonconstant by
\cref{lem:no-contracted-curve}.

\begin{proposition}[Boundary-line exclusion]\label{prop:B-boundary-line-exclusion}
Under these hypotheses, $C\cap\partial\B^2=\varnothing$.  Consequently,
$C$ is Hermitian negative.
\end{proposition}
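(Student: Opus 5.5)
We argue by contradiction: assume $z_\partial\in C\cap\partial\B^2$, and split according to whether $C$ is tangent to the sphere at $z_\partial$ (the line $C$ is Hermitian degenerate) or secant (the restriction of $\langle\ ,\ \rangle_{1,2}$ to $\Lambda_C$ has signature $(1,1)$). Once $C\cap\partial\B^2=\varnothing$ is known, negativity follows exactly as in the last paragraph of the proof of \cref{lem:C-negative}: the form on $\Lambda_C$ has no null vector, so it is definite, and it cannot be positive because the ambient positive index is one.

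\emph{Common local input.} At $z_\partial$, \cref{lem:no-base} gives $F(z_\partial)\neq0$. Hopf transversality and the Levi identity give $\rank d[F]_{z_\partial}=2$, and the same holds at nearby boundary points of $C$. Since $F$ and $dF$ take values in $V$ along $C$, the image $[F](C\cap\partial\B^2)$ lies in $\PP(U)\cap\partial\B^4$. Moreover, the boundary identity $\langle F,F\rangle_{1,4}=0$, restricted to $C\cap\partial\B^2$, gives a real curve in $\PP(U)$ along which $[F]$ is null. The first goal is to determine the signature of $U$. It cannot be negative definite, since it contains the null vector $F(z_\partial)$. In the secant case, $[F]$ maps the circle $C\cap\partial\B^2$ into the null set of $\PP(U)$. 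That null set is a circle if $U$ has signature $(1,1)$, and a single point if $U$ is degenerate. The degenerate option would make $[F]|_C$ constant on a real curve, and hence constant, contradicting \cref{lem:no-contracted-curve}. So $U$ has signature $(1,1)$. In the tangent case, I would show by a similar but infinitesimal argument at $z_\partial$ that $U$ contains a null vector.

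\emph{Secant case.} Here the plan is to run the normal-form argument of \cref{lem:projection-normal-form}, now with $C$ of signature $(1,1)$ and $U$ of signature $(1,1)$. I would write $F=\widehat\chi Y+x\widetilde g e_\perp+x^3K$, where $x$ is a defining form of $C$. Then I would substitute into \eqref{eq:polarized-identity} and compare orders in the normal variable, as in Cases I–III. The pure-trace relation \eqref{eq:half-Wronskian}, $g_C^2=\gamma\chi^2\operatorname{Wr}(Y)$, still holds, so $\operatorname{Wr}(Y)$ is a square. Equivariance \eqref{eq:polarity-Y} now involves polarities with fixed circles. I expect the boundary circle to be mapped to the null circle of $U$, which means $[Y]|_C$ is a finite Blaschke-type map of degree $\nu$ preserving the two discs. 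A finite Blaschke product of degree $\nu$ whose critical points all have even multiplicity, and which moreover extends to a proper map compatible with the third-order divisibility \eqref{eq:c3-div}, should be forced to have $\nu=1$. Then $F$ agrees with a linear map to third order along $C$. A Burns--Krantz argument like the one in the proof of \cref{lem:fourth-contact} would identify $[F]$ with the linear embedding, contradicting degree six.

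\emph{Tangent case and main obstacle.} If $C$ is tangent at $z_\partial$, then $\Lambda_C$ is degenerate with radical $z_\partial$, and $C$ meets $\overline{\B^2}$ only at $z_\partial$. Here I would use the pure-trace gauge of \cref{lem:pure-trace-reduction}, under which $D_XD_Y\widehat F\in\CC\widehat F$ on $C$. Combined with the fact that $V$ is fixed, this shows that $[F]$ agrees to second order along $C$ with a projective linear map $\CC^3\to V$. I would then feed this into the boundary contact expansion, as in \eqref{eq:hc-boundary-contact}, but only to third order: because of \eqref{eq:c3-div}, the error is $O(\|z-z_\partial\|^3)$ rather than fourth order. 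Upgrading this third-order contact at a single tangency point to a rigidity conclusion is the step I expect to be hardest. Burns--Krantz needs contact of order $o(|z-z_\partial|^3)$, so I would try to extract one extra order from the tangency, using the degeneracy of $\Lambda_C$ along the complex tangent direction. Failing that, I would exclude the case by a reduced-degree count instead: expand \eqref{eq:polarized-identity} with $c=$ the tangent line form, and use the separated-rank estimates of \cref{lem:caseIII-rank-facts} to force a common factor of $F$.
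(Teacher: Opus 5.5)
Your architecture matches the paper's: the tangent/secant split, the Blaschke restriction on a secant line, and negativity from definiteness once the boundary is avoided. But in both cases the plan stops exactly where the substance lies, and neither finishing device you propose works as stated.

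\textbf{Secant case.} The claim that a Blaschke product with only even ramification, ``compatible with'' \eqref{eq:c3-div}, must have degree one is unsupported, and it is false at the Blaschke level. The maps $t^3$ and $t^5$ have all critical multiplicities even, with critical points paired by reflection. So parity only gives $\nu\in\{1,3,5\}$, as in \eqref{eq:B-degree-alternative}. Excluding $\nu=3$ and $\nu=5$ requires the polarized identity applied to the normal form $F=\widehat\chi Y+x\widetilde g\,e_\perp+x^3H$, together with the fact that $\widehat\chi$ has no zero on $\overline{\B^2}$, see \eqref{eq:B-chi-no-zero}. For $\nu=3$ the paper shows $\langle e_\perp,e_\perp\rangle_{1,4}=-3$ and kills the $x^2$ and $x^3$ coefficients along $e_\perp$. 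Then $N=H/\widehat\chi$ has norm one on the sphere, so by the maximum principle and Alexander's theorem it is constant or an automorphism, giving $\deg f\le4$. For $\nu=5$ the paper needs four ingredients:
\begin{itemize}
\item the kernel identity $A_4-c_0\rho\rho^*=\sigma_-^2Q$;
\item the positivity $A_4=\sum_jP_jP_j^*$ coming from the Blaschke factors \eqref{eq:B-Blaschke-squares};
\item the rank bounds $r(\sigma_-^3Q)\le4$ and $r(\sigma_-^2Q)=5$;
\item the factorization \cref{lem:B-cubic-factorization}, which forces $\ell\mid H$ and contradicts reducedness.
\end{itemize}
None of this appears in your plan. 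Even for $\nu=1$, Burns--Krantz is the wrong tool. With only $c^3$-divisibility the error is $O(\|z-z_\partial\|^3)$, whereas Burns--Krantz needs fourth-order contact; this is why \cref{lem:fourth-contact} relies on $c^4$. The paper instead gets $\eta=\beta=0$ directly from the polarized identity, and then $N=0$ from negativity of $V^\perp$.

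\textbf{Tangent case.} You concede the decisive step, and the preliminary step is also unjustified. Condition \eqref{eq:hessian-scalar} is second order. Agreement of $[F]$ with a projective linear map along $C$ would make $[F]|_C$ M\"obius, which is a third-order (Schwarzian) condition that the pure-trace gauge does not supply. In the paper this comes from combining the pure-trace relations $s''=2\varrho(\tau)s'$ and $(g_w)'=\varrho(\tau)g_w$ with the Segre-reflection identity $g_w(z,0)=s(z)/z$. The latter is obtained by setting $\omega=0$ in the complexified sphere identity \eqref{eq:B-tangent-reflection} in Siegel coordinates, and together they yield $s=z/(1+bz)$.

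The missing idea is to avoid finite-order contact entirely. After a target automorphism normalizes $s(z)=z$, the same reflection identity gives the exact relation $zg=wa$, hence $a=zA$ and $g=wA$. The reduced identity $h^*-h=2iw^2\omega^2L\cdot L^*$ then forces $h$ to be a real constant and $L\cdot L^*=0$ on the complexified sphere. Hence $\phi=w^3K\equiv0$, the image lies in $\PP(V)$, and linear fullness fails.

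Your fallback, a reduced-degree count via separated ranks, is too unspecified to assess. Extracting an extra order of contact from the tangency is also implausible: along the normal direction, $|w|$ is comparable to the distance to $z_\partial$, so an $O(|w|^3)$ error is no better than $O(\|z-z_\partial\|^3)$.
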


We prove the proposition by excluding tangent and secant lines separately.
At a boundary point of $C$, the boundary transversality and Levi identity
give projective differential rank two.  More precisely, for a nonzero CR
tangent direction $L$ and a suitable transverse direction $N$,
\[
 \langle F,F\rangle_{1,4}=\langle F,D_LF\rangle_{1,4}=0,\qquad
 \langle D_LF,D_LF\rangle_{1,4}<0,\qquad
 \langle F,D_NF\rangle_{1,4}\ne0.
\]
Together with \eqref{eq:B-initial-data}, this shows that $V$ is nondegenerate
of signature $(1,2)$; hence $V^\perp$ is negative definite.  These are the
same boundary facts used in \cref{subsec:higher-corank}.

The pure-trace identity \eqref{eq:pure-trace} is preserved under changes of local lift and
projective source coordinates, with a corresponding change of
$\varrho$.  At the boundary points considered below, $\varrho$ extends
holomorphically since the projective differential has rank two.

\subsection{The tangent-line case}\label{subsec:B-tangent}

Suppose that $C$ is tangent to the source sphere at $p$.  Choose projective
Siegel coordinates, centered at $p$ and $f(p)$, in which
\[
 \mathbb H_2=\{(z,w):\operatorname{Im}w>|z|^2\},\qquad C=(w=0),
\]
and the target coordinates are $(a,\phi,g)$, with $\phi\in\CC^2$.  The
spaces $\PP(V)$ and $\PP(U)$ have equations $\phi=0$ and $\phi=g=0$,
respectively.  By \eqref{eq:B-initial-data}, the local map has the form
\begin{equation}\label{eq:B-tangent-map}
 \Psi=(a,\phi,g),\qquad \phi=w^3K,\qquad
 a(z,0)=s(z),\quad g(z,0)=0.
\end{equation}
Boundary transversality and a target dilation and rotation allow us to
assume $s(0)=0$ and $s'(0)=g_w(0)=1$.
For a holomorphic germ, a star means coefficient conjugation, as in
\cref{sec:homogeneous}.

The complexified sphere identity is
\begin{equation}\label{eq:B-tangent-reflection}
 g(z,w)-g^*(\zeta,\omega)
 =2i\bigl(a(z,w)a^*(\zeta,\omega)
       +\phi(z,w)\cdot\phi^*(\zeta,\omega)\bigr),
 \qquad w-\omega=2iz\zeta,
\end{equation}
where a dot denotes the complex bilinear extension of the positive
definite coordinate pairing.  Setting $\omega=0$ and then differentiating
in $\zeta$ at zero gives
\[
 g(z,2iz\zeta)=2i\,a(z,2iz\zeta)s^*(\zeta),\qquad
 g_w(z,0)=s(z)/z.
\]
Let $\tau:=\partial/\partial z$ denote the tangent direction along $C=(w=0)$.
The tangent--tangent and tangent--normal parts of the pure-trace identity
\eqref{eq:pure-trace} for the affine lift $(1,a,\phi,g)$ give
\[
 s''=2\varrho(\tau)s',\qquad (g_w)'=\varrho(\tau)g_w.
\]
Consequently,
\[
 \frac{s''}{2s'}=\frac{s'}s-\frac1z,
\]
first on a punctured neighborhood and then as an identity of germs.
Integration, using $s'(0)=1$, yields
\begin{equation}\label{eq:B-tangent-fractional}
 s(z)=\frac{z}{1+bz}
\end{equation}
for a constant $b$.

This fractional linear term can be removed by an explicit target ball
automorphism.  Put $\eta=i\overline b/2$ and
\[
 D=1-2i\overline\eta\,a-i|\eta|^2g.
\]
The transformation
\[
 (a,\phi,g)\longmapsto
 \left(\frac{a+\eta g}{D},\frac{\phi}{D},\frac gD\right)
\]
fixes the origin and preserves the Siegel domain, as follows directly from
\[
 \operatorname{Im}(g\overline D)-|a+\eta g|^2-\|\phi\|^2
 =\operatorname{Im}g-|a|^2-\|\phi\|^2.
\]
On $\PP(U)$ it sends $a$ to $a/(1-ba)$ and hence sends
\eqref{eq:B-tangent-fractional} to $z$.  It also preserves the divisibility
$\phi\in(w^3)$.  We henceforth use these coordinates, so that $s(z)=z$.

Equation \eqref{eq:B-tangent-reflection} with $\omega=0$ now implies
$zg=wa$.  Indeed, this identity holds on the open image of
$(z,\zeta)\mapsto(z,2iz\zeta)$ with $z\ne0$, and extends holomorphically.
Thus
\[
 a=zA,\qquad g=wA,\qquad A(z,0)=1.
\]
Write $A^{-1}=1+wh$ and $L=K/A$.  Dividing
\eqref{eq:B-tangent-reflection} by $AA^*$, and cancelling $w\omega$ in the
local integral domain of $w-\omega=2iz\zeta$, gives
\begin{equation}\label{eq:B-tangent-h}
 h^*(\zeta,\omega)-h(z,w)
 =2i\,w^2\omega^2 L(z,w)\cdot L^*(\zeta,\omega).
\end{equation}
At $\omega=0$, this reads $h(z,2iz\zeta)=h^*(\zeta,0)$.
Setting $z=0$ shows that the right side is constant.  The same open-image
argument shows that $h$ is constant, and evaluation at the origin shows
that this constant is real.  Equation \eqref{eq:B-tangent-h} therefore
implies $L\cdot L^*=0$ on the complexified sphere.  On its real form this
is $\|L\|^2=0$.  Each component of $L$ vanishes on an open piece of the
real sphere and hence vanishes identically as a holomorphic germ.
Therefore $K=0$.

It follows that the local image is contained in $\PP(V)$.  Since the map
is rational, this containment holds globally, contradicting linear
fullness.  Thus the tangent-line case is impossible.

\subsection{Reduction on a secant line}\label{subsec:B-secant}

Suppose next that $C$ meets $\B^2$.  The source form on the two-plane
representing $C$ has signature $(1,1)$.  Since $U$ contains the image of an
interior point, its target form also has signature $(1,1)$: a subspace
containing a positive vector is nondegenerate, because the orthogonal
complement of that vector is negative definite.  The same argument gives
signature $(1,2)$ on $V$.

Choose source coordinates
\begin{equation}\label{eq:B-secant-coordinates}
 \begin{gathered}
 Z=(z_0,z_1,x),\qquad W=(w_0,w_1,y),\qquad C=(x=0),\\
 (Z,W)_{1,2}=\sigma_-(z,w)-xy,\qquad
 \sigma_-(z,w)=z_0w_0-z_1w_1.
 \end{gathered}
\end{equation}
The subscript distinguishes this form from the positive binary form
$\sigma$ used on a negative line in \cref{subsec:corank-one}.
Write
\[
 F|_C=\chi Y,\qquad Y=(Y_0,Y_1)\in U,\qquad
 \gcd(Y_0,Y_1)=1,\qquad \deg Y=\nu,\quad\deg\chi=6-\nu.
\]
Then $Y_1(1,t)/Y_0(1,t)$ is a nonconstant finite Blaschke product of degree
$\nu$.  In fact, it maps the disk into the disk, is holomorphic across its
boundary, and has modulus one there.  Factoring its zeros in the disk and
applying the maximum principle to the remaining zero-free quotient gives
the Blaschke product representation.

Choose $e_\perp\in V\cap U^\perp\setminus\{0\}$ and write
$[\partial_xF]_{V/U}=g_Ce_\perp$ on $C$.  In the chart $z_0=1$, let a prime
denote differentiation in $t=z_1/z_0$, and set $\tau:=\partial/\partial t$.
The pure-trace identity \eqref{eq:pure-trace} gives
\[
 2\varrho(\tau)=2\frac{\chi'}\chi+
 \frac{\operatorname{Wr}(Y)'}{\operatorname{Wr}(Y)},\qquad
 \frac{g_C'}{g_C}=\varrho(\tau).
\]
Here the homogeneous Wronskian is
\[
 \operatorname{Wr}(Y)=\frac1\nu
 \left(\frac{\partial Y_0}{\partial z_0}
       \frac{\partial Y_1}{\partial z_1}
      -\frac{\partial Y_0}{\partial z_1}
       \frac{\partial Y_1}{\partial z_0}\right);
\]
its dehomogenization is $Y_0Y_1'-Y_1Y_0'$, and its degree is $2\nu-2$.
Integration and homogenization give
\begin{equation}\label{eq:B-half-Wronskian}
 g_C^2=\gamma\chi^2\operatorname{Wr}(Y),\qquad
 g_C=\chi\rho,\qquad
 \operatorname{Wr}(Y)=\gamma^{-1}\rho^2,\qquad \deg\rho=\nu-1,
\end{equation}
with $\gamma\ne0$.  The divisibility $\chi\mid g_C$ follows in the binary
polynomial UFD.  In particular, every ramification multiplicity of $[Y]$
is even.

For a Blaschke product $B(t)=e^{i\theta}\prod_j(t-a_j)/(1-\overline a_jt)$,
\[
 \frac{tB'(t)}{B(t)}
 =\sum_j\frac{1-|a_j|^2}{|t-a_j|^2}>0\qquad(|t|=1).
\]
Thus there is no ramification on the circle.  Reflection
$t\mapsto1/\overline t$ pairs all ramification points, since
$B(1/\overline t)=1/\overline{B(t)}$.  Each pair contributes a multiple of
four by \eqref{eq:B-half-Wronskian}.  Riemann--Hurwitz therefore gives
\begin{equation}\label{eq:B-degree-alternative}
 2\nu-2\equiv0\pmod4,\qquad \nu\in\{1,3,5\}.
\end{equation}

The projection argument can be performed before any negativity conclusion
about $C$.  For a general $Z=(z,x)$, choose $W=(w,0)$ with
$\sigma_-(z,w)=0$.  The polarized identity gives
$(\pi_UF(Z),Y^*(w))_{1,4}=0$.  Its kernel on $U$ is the line
$\CC Y(z)$.  Coprimality of $Y_0,Y_1$ thus gives a homogeneous polynomial
$\widehat\chi$ with $\pi_UF=\widehat\chi Y$ and
$\widehat\chi(z,0)=\chi(z)$.  Splitting $V=U\oplus\CC e_\perp$ and using
\eqref{eq:B-initial-data} now yields
\begin{equation}\label{eq:B-secant-normal-form}
 F=\widehat\chi Y+x\widetilde g\,e_\perp+x^3H,\qquad
 H\in V^\perp,\qquad \widetilde g(z,0)=\chi\rho,
\end{equation}
where the degrees of $\widehat\chi,\widetilde g,H$ are $6-\nu,5,3$.
In an adapted Hermitian basis, $F_0=\widehat\chi Y_0$.  The positive target
coordinate $F_0$ cannot vanish over the closed ball: a nonzero positive or
null vector in signature $(1,4)$ has nonzero positive coordinate.
Consequently,
\begin{equation}\label{eq:B-chi-no-zero}
 \widehat\chi\ne0\quad\text{over }\overline{\B^2}.
\end{equation}

\subsection{The restrictions of degrees one and three}\label{subsec:B-low-degrees}

For $\nu=1$, source and target disk automorphisms and constant rescaling
give $Y=(z_0,z_1)$ and $\rho=1$.  For $\nu=3$, the ramification divisor
has two points, each of multiplicity two, one inside the disk and the
other its reflection.  Disk automorphisms send the interior critical point
and its value to zero; reflection sends their partners to infinity.
The resulting map is a unimodular multiple of $t^3$.  Thus, after
rescaling $e_\perp$, we have
\[
 Y=(z_0^3,z_1^3),\qquad \rho=z_0z_1.
\]
All these transformations are projective and preserve reduced degree.

In either case divide \eqref{eq:B-secant-normal-form} by
$\widehat\chi$ near $C$, and expand only its $e_\perp$-coefficient:
\begin{equation}\label{eq:B-low-expansion}
 F/\widehat\chi=Y+x\rho e_\perp+x^2\eta e_\perp
                  +x^3(\beta e_\perp+N),\qquad N\in V^\perp.
\end{equation}
The functions $\eta$ and $\beta$ are scalar, and $N=H/\widehat\chi$.
Put $\delta=\langle e_\perp,e_\perp\rangle_{1,4}<0$.
For a general pair $z,w^{(0)}$ with $\sigma_-(z,w^{(0)})=0$, take
$w=w^{(0)}+xy\zeta$ with $\sigma_-(z,\zeta)=1$.  On this incidence chart,
comparison of terms of total degree two and three in $x,y$ gives
\begin{equation}\label{eq:B-low-coefficients}
 \begin{array}{c|c|c}
 \nu & \delta & \eta\\ \hline
 1 & -1 & 0\\
 3 & -3 & 0
 \end{array}
\end{equation}
Indeed, the leading pairings are $(1+\delta)xy$ for $\nu=1$, and
$\sigma_-^3+(3+\delta)xy\,z_0z_1w_0w_1$ for $\nu=3$.
The next terms are
$\delta x^2y\,\eta\rho^*+\delta xy^2\,\rho\eta^*$; the remainder in
\eqref{eq:B-low-expansion} starts at total degree four.

Let $\varepsilon=0$ for $\nu=1$ and $\varepsilon=1$ for $\nu=3$.
The full polarized identity, after \eqref{eq:B-low-coefficients}, is
\[
 0=\varepsilon x^3y^3+
 \delta(x^3y\,\beta\rho^*+xy^3\,\rho\beta^*)
 +x^3y^3\bigl(\delta\beta\beta^*+(N,N^*)_{1,4}\bigr).
\]
Cancel $xy$ and set $y=0$ at a general polar pair with $\rho^*(w)\ne0$.
This gives $\beta=0$.  Hence
\begin{equation}\label{eq:B-low-N}
 (N(Z),N^*(W))_{1,4}=-\varepsilon
 \qquad\text{on }\sigma_-=xy.
\end{equation}
For $\nu=1$, restriction to the real sphere and negative definiteness
force $N=0$, so the map is projectively linear.
For $\nu=3$, \eqref{eq:B-chi-no-zero} shows that $N=H/\widehat\chi$ is
holomorphic on a neighborhood of the closed ball in the chart $z_0=1$.
With the positive definite metric $-\langle\ ,\ \rangle_{1,4}$ on
$V^\perp$, \eqref{eq:B-low-N} says $\|N\|=1$ on the sphere.
The maximum principle shows that either $N$ is constant or it is a proper
self-map of $\B^2$.  In the latter case Alexander's theorem
\cite{Alexander1977} makes $N$ a ball automorphism, hence a projective
linear map in homogeneous coordinates.  If $N$ is constant,
\eqref{eq:B-low-expansion} gives a cubic representation.  Otherwise write
$N=(L_1/L_0,L_2/L_0)$ with homogeneous linear forms $L_j$; multiplication
by $L_0$ gives a quartic representation.  Thus $\deg f\le4$ in all cases,
contrary to reduced degree six.

\subsection{The quintic restriction and its Hermitian kernel}\label{subsec:B-quintic}

It remains to consider $\nu=5$.  Write
\[
 \ell=\widehat\chi=\chi+\alpha x,\qquad
 c_0=-\langle e_\perp,e_\perp\rangle_{1,4}>0.
\]
Choose a negative orthonormal basis of $V^\perp$ and, for its coefficient
vectors, write
\[
 H(Z)\cdot H^*(W)=-(H(Z),H^*(W))_{1,4}.
\]
Define the Hermitian kernel $A_4$ of bidegree $(4,4)$ by
\begin{equation}\label{eq:B-Blaschke-kernel}
 Y_0(z)Y_0^*(w)-Y_1(z)Y_1^*(w)=\sigma_-(z,w)A_4(z,w).
\end{equation}
The polarized identity becomes, in the integral incidence ring,
\begin{equation}\label{eq:B-first-kernel}
 \ell\ell^*A_4-c_0\widetilde g\,\widetilde g^*
 =\sigma_-^2 H\cdot H^*,\qquad xy=\sigma_-.
\end{equation}

\emph{The first normal coefficient.}
Write $\widetilde g=\sum_{j=0}^5x^jg_j$, with $g_0=\chi\rho\ne0$,
and substitute $y=\sigma_-/x$ in \eqref{eq:B-first-kernel}.
The terms involving $\ell\ell^*$ have Laurent exponents at most one,
and those involving $H\cdot H^*$ at most three.  The coefficients of
$x^5,x^4$ give $g_5=g_4=0$.  Successively, the coefficients of $x^3,x^2$
show that $\sigma_-^2$ divides $g_3g_0^*$ and $g_2g_0^*$.
Since the irreducible mixed form $\sigma_-$ divides no nonzero separated
product, $g_3=g_2=0$.
If $\alpha=0$, the coefficient of $x$ gives $g_1=0$ as well.
If $\alpha\ne0$, take general $Z\in(\ell=0)$ and $W=(w,0)$ with
$\sigma_-(z,w)=0$.  Equation \eqref{eq:B-first-kernel} implies
$\widetilde g(Z)=0$, since $\widetilde g^*(w,0)=\chi^*\rho^*$ is
generically nonzero.  Hence $\ell\mid\widetilde g$; as
$\widetilde g=g_0+xg_1$ has $x$-degree at most one, its quotient by
$\ell=\chi+\alpha x$ is independent of $x$.  In either case,
\begin{equation}\label{eq:B-factor-normal}
 \widetilde g=\ell\rho,\qquad
 F=\ell(Y+x\rho e_\perp)+x^3H.
\end{equation}

Substitution in \eqref{eq:B-first-kernel} and comparison of the coefficient
of $x$ when $\alpha\ne0$, or of $x^0$ when $\alpha=0$, gives
$\sigma_-^2\mid A_4-c_0\rho\rho^*$.  We obtain a Hermitian kernel $Q$ of
bidegree $(2,2)$ such that
\begin{equation}\label{eq:B-Q-identities}
 A_4-c_0\rho\rho^*=\sigma_-^2Q,\qquad
 H\cdot H^*=\ell\ell^*Q\quad\text{on }xy=\sigma_-.
\end{equation}
The first identity is a polynomial identity in $z,w$; the second is an
identity in the incidence ring.

\emph{Rank and positivity.}
For a bihomogeneous kernel $P$, let $r(P)$ denote its separated rank,
equivalently the rank of its coefficient matrix.  Multiplying the first
identity in \eqref{eq:B-Q-identities} by $\sigma_-$ gives
\begin{equation}\label{eq:B-rank-upper}
 \begin{split}
 \sigma_-^3Q={}&Y_0Y_0^*-Y_1Y_1^*
 -c_0(z_0\rho)(w_0\rho^*)+c_0(z_1\rho)(w_1\rho^*),\\
 &r(\sigma_-^3Q)\le4.
 \end{split}
\end{equation}
The coefficient matrix of $A_4$ is positive definite.  To see this
explicitly, after a common scalar rescaling write
\[
 Y_0=\prod_{j=1}^5D_j,\quad Y_1=e^{i\theta}\prod_{j=1}^5N_j,
 \qquad D_j=z_0-\overline a_jz_1,\quad N_j=z_1-a_jz_0,
 \quad |a_j|<1.
\]
The telescoping product identity gives
\begin{equation}\label{eq:B-Blaschke-squares}
 A_4=\sum_{j=1}^5P_jP_j^*,\qquad
 P_j=\sqrt{1-|a_j|^2}\prod_{k<j}N_k\prod_{k>j}D_k.
\end{equation}
These five quartics are independent.  In a putative linear relation,
evaluation at $t=a_1$ in the chart $z_0=1$ kills all but $P_1$; after its
coefficient vanishes, divide by $t-a_1$ and evaluate at $a_2$.  Repetition
proves independence, also when zeros are repeated.  A common scalar
rescaling only multiplies this matrix by a positive constant.

The matrix of $A_4-c_0\rho\rho^*$ has at least four positive eigenvalues.
It cannot be positive semidefinite: by \eqref{eq:B-Q-identities} its
diagonal vanishes on $|z_0|=|z_1|$, whereas a positive semidefinite
coefficient matrix gives a sum of squared absolute values of quartics.
Each quartic would then vanish identically, a contradiction.  Consequently,
\begin{equation}\label{eq:B-rank-lower}
 r(\sigma_-^2Q)=5.
\end{equation}
These rank facts imply
\begin{equation}\label{eq:B-Q-properties}
 \sigma_-\nmid Q,\qquad r(Q)\ge2,\qquad
 Q\text{ has no nonconstant factor depending only on }z.
\end{equation}
For the first assertion, the divisibility-rank bound of
\cref{lem:caseIII-rank-facts}(ii), applied after $w_1\mapsto-w_1$, would
give $r(\sigma_-^3Q)\ge5$ if $\sigma_-\mid Q$.
For the second, a rank-one $Q$ would give $r(\sigma_-^2Q)\le3$.
For the third, a pure linear factor of $Q$ would force all left quartic
factors of $\sigma_-^2Q$ into the four-dimensional space of quartics
divisible by that factor.  Over $\CC$, every nonconstant homogeneous
binary factor has a linear factor.

Finally, \eqref{eq:B-chi-no-zero} says that the linear form $\ell$ has no
zero on the closed source ball.  Its dual Hermitian norm is therefore
positive.  A source transformation in $U(1,1)$ on $(z_0,z_1)$, a rotation
of $x$, and a constant rescaling put it in the form
\begin{equation}\label{eq:B-ell-normalization}
 \ell=z_0+\alpha x,\qquad \chi=z_0,\qquad 0\le\alpha<1.
\end{equation}
Indeed, after normalizing the positive dual norm of $\chi$, the positive
dual norm of $\ell$ is $1-|\alpha|^2$.  The rank statements are unchanged
by these invertible coordinate changes.

\subsection{A two-component cubic factorization}\label{subsec:B-factorization}

The following algebraic lemma finishes the secant case.  The pairing in
its statement is positive definite, unlike the signature-$(1,1)$ pairing
in the negative-line factorization of \cref{subsec:corank-one}.

\begin{lemma}[Positive two-component factorization]\label{lem:B-cubic-factorization}
Let $\sigma_-=z_0w_0-z_1w_1$, $\chi=z_0$, and
$\ell=\chi+\alpha x$ with $0\le\alpha<1$.  Let $H(z,x)$ be a
$\CC^2$-valued homogeneous cubic and $Q(z,w)$ a Hermitian kernel of
bidegree $(2,2)$ satisfying
\begin{equation}\label{eq:B-factor-hypotheses}
 H(Z)\cdot H^*(W)=\ell(Z)\ell^*(W)Q(z,w)
 \quad\text{on }xy=\sigma_-,\qquad
 r(\sigma_-^3Q)\le4,\quad r(\sigma_-^2Q)=5.
\end{equation}
Then $\ell$ divides both components of $H$.
\end{lemma}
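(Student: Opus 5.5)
We follow the template of \cref{lem:caseIII-cubic-factorization}. The pairing is now positive definite, and the normal form \eqref{eq:B-ell-normalization} is more rigid. Work in the localized incidence ring with $x\neq0$, substitute $y=\sigma_-/x$, and expand both sides of \eqref{eq:B-factor-hypotheses} in Laurent powers of $x$. Write
\[
 H=H_0+xH_1+x^2H_2+x^3H_3,
\]
where $H_j=H_j(z)$ is a $\CC^2$-valued binary form of degree $3-j$, so that $H^*(W)=\sum_j(\sigma_-/x)^jH_j^*(w)$. The right-hand factor
\[
 \ell\ell^*=(z_0+\alpha x)\bigl(w_0+\alpha\sigma_-/x\bigr)
\]
only involves the exponents $x^{1},x^0,x^{-1}$. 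The left side $\sum_{i,j}x^{i-j}\sigma_-^jH_i\cdot H_j^*$ involves the exponents $-3,\dots,3$.

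The first step is to compare the extreme exponents. The coefficient of $x^3$ is $H_3\cdot H_0^*$. The coefficient of $x^2$ is $H_3\cdot H_1^*\sigma_-+H_2\cdot H_0^*$. Both must vanish. By Hermitian symmetry the coefficients of $x^{-3},x^{-2}$ give the adjoint relations. Using \cref{lem:caseIII-rank-facts}(i),(ii) (after $w_1\mapsto-w_1$), which says $\sigma_-$ divides no separated product, we reduce $H$ to a two-term normal form
\[
 H=u_1(P_3+xP_2)+u_2x^2(P_1+xP_0),
\]
with fixed orthogonal vectors $u_1,u_2$ in the positive pairing, analogous to \eqref{eq:caseIII-H-normal}. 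The condition $r(\sigma_-^2Q)=5$ rules out degenerate choices such as $P_3=0$. If $P_3=0$, the lowest coefficient would force $\sigma_-\mid Q$, which is impossible by \eqref{eq:B-Q-properties}, itself derived from $r(\sigma_-^3Q)\le4$.

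The second step is to evaluate the coefficient of $x^1$ at the zero $q_0=[0:1]$ of $\chi^*$. This gives a relation of the form
\[
 P_2(z)P_3^*(q_0)+\sigma_-(z,q_0)^2P_1^*(q_0)\,(\dots)=0.
\]
We then split into two cases, $P_3(q_0)\neq0$ and $P_3(q_0)=0$. In the first case, we solve for $P_2$ as a multiple of $z_0^2$ or $z_1^2$, up to the sign conventions of $\sigma_-$. Then we reconstruct $Q$ from the coefficients of $x^1$ and $x^0$. We show that its coefficient matrix makes $\sigma_-^3Q$ have rank at least $5$, contradicting $r(\sigma_-^3Q)\le4$. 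This is the analogue of \cref{lem:caseIII-rank-facts}(iii), now for the indefinite form $\sigma_-$ with a positive pairing. In the second case, $\chi=z_0$ divides $P_3$ and $P_1$.

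The third step handles $\alpha=0$ and $\alpha>0$. For $\alpha=0$, the UFD argument applies: the coefficient of $x^0$ would force $\sigma_-^3$ to be divisible by $\chi\chi^*$, which is impossible. For $\alpha>0$, we match the coefficients of $x^1$ and $x^0$ exactly as in the final part of \cref{lem:caseIII-cubic-factorization}. Restricting successively to $\sigma_-=0$ yields $\tau=\kappa=1/\alpha$. This exhibits
\[
 H=\alpha^{-1}(z_0+\alpha x)(\cdots),
\]
so $\ell$ divides both components of $H$.

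The main obstacle is the first case, $P_3(q_0)\neq0$. There, positivity of the diagonal is no longer available, since $Q$ is only known through the rank data. We must certify rank at least $5$ for $\sigma_-^3Q$ directly. The first approach to try is an explicit computation of the $6\times6$ coefficient matrix after the substitution $w_1\mapsto-w_1$. Failing that, we would use the constraint $\alpha<1$ to produce a positive-definite $3\times3$ block of $Q$.
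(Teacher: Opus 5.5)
Your Step~1 is where the argument fails. The normal form $H=u_1(P_3+xP_2)+u_2x^2(P_1+xP_0)$ is forced only when the top coefficient $H_3$ is nonzero. In that case the $x^3$ coefficient gives $H_3\cdot H_0^*=0$, and the term $\sigma_-H_3\cdot H_1^*$ in the $x^2$ coefficient $H_2\cdot H_0^*+\sigma_-H_3\cdot H_1^*$ is what confines $H_1$ and $H_2$ to single directions. Nothing in the hypotheses gives $H_3\neq0$. In \cref{lem:caseIII-cubic-factorization} the $x^3$ coefficient of $H$ was $F(p_+)$, a positive vector. In the secant application it is the value of $F$ at $[0:0:1]$, a point outside the closed ball, and it may vanish.

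\emph{Case $H_3=0\neq H_2$.} You only get $H_2\cdot H_0^*=0$. So $H_0=nP_3$ and $H_2=vP_1$, but $H_1=nA_2+vB_2$, with $B_2$ not constrained by the extreme coefficients.

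\emph{Case $H_3=H_2=0$.} The coefficients of $x^{\pm3},x^{\pm2}$ vanish identically, and $H_0,H_1$ may have two-dimensional coefficient spans. The expected outcome $H=\ell K_2$, with a genuinely vector-valued quadratic $K_2$, lives exactly here.

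So your plan covers one of three cases, and the other two do not reduce to it.

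The case $H=H_0+xH_1$ needs an idea absent from your proposal, and it is where $\alpha<1$ really enters. Take $0<\alpha<1$.
\begin{enumerate}
\item First $\chi\mid H_0$. Otherwise, evaluating $H_1\cdot H_0^*=\alpha\chi^*Q$ at $w=(0,1)$ puts the coefficient span of $H_1$ in a line. The left side then has separated rank at most one, whereas $r(Q)\ge2$.
\item Write $H_0=\chi K_2$, and set $T=(Q-K_2\cdot K_2^*)/\sigma_-$ and $D=H_1-\alpha K_2$. One gets $D\cdot D^*=(\chi\chi^*-\alpha^2\sigma_-)T$ and $D\cdot K_2^*=\alpha\sigma_-T$.
\item Since $\chi\chi^*-\alpha^2\sigma_-=(1-\alpha^2)z_0w_0+\alpha^2z_1w_1$ is positive definite, $T$ is positive semidefinite.
\item $r(T)=2$ contradicts a rank count: the right side would have rank three, the left side at most two.
\item $r(T)=1$ produces a pure $z$-factor of $Q$, which \eqref{eq:B-Q-properties} excludes.
\end{enumerate}
Hence $T=D=0$ and $H=\ell K_2$. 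The subcases with $\alpha=0$, and the case $H_3=0\neq H_2$, are settled by evaluating at $w=(0,1)$ and at $z=w=(0,1)$, where $\chi$ and $\chi^*$ vanish and $\sigma_-=-1$. So, contrary to your remark, positivity is exactly the tool in the cases you omitted.

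Even in the case you do treat, the obstacle you flag ($P_3(q_0)\neq0$) is an artifact of copying the negative-line template. Multiply the $x^1$ identity by $\chi$, subtract $\alpha$ times the $x^0$ identity, and reduce modulo $\sigma_-$. This gives $(\chi P_2-\alpha P_3)P_3^*\equiv0$, hence $\chi P_2=\alpha P_3$.
\begin{itemize}
\item For $\alpha>0$, this gives $\chi\mid P_3$ at once, and the factorization follows by matching coefficients.
\item For $\alpha=0$, it gives $P_2=P_1=0$ and then $P_3=cz_1^3$. This is the only place a rank certificate is needed: $\sigma_-^3Q$ then has a diagonal coefficient matrix of rank six, contradicting $r(\sigma_-^3Q)\le4$.
\end{itemize}
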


\begin{proof}
The consequences \eqref{eq:B-Q-properties} follow from the two rank
hypotheses exactly as above.  Write
\begin{equation}\label{eq:B-H-expansion}
 H=H_0+xH_1+x^2H_2+x^3u,\qquad \deg H_j=3-j,
\end{equation}
where $u$ is a constant vector.  The polynomial $H_0$ is nonzero: otherwise
restriction of \eqref{eq:B-factor-hypotheses} to $x=y=0$ gives
$\chi\chi^*Q=0$ on $\sigma_-=0$, and hence $\sigma_-\mid Q$.
We compare Laurent coefficients after $y=\sigma_-/x$.  The right side
contains only powers $x^{-1},x^0,x^1$.

\emph{Case 1: $u\ne0$.}
The $x^3$ coefficient gives $u\cdot H_0^*=0$.  Choose a unit vector $n$
orthogonal to $u$ and write $H_0=nP_3$, where $P_3\ne0$ is cubic.
The $x^2$ coefficient is
\[
 (H_2\cdot\overline n)P_3^*+\sigma_-u\cdot H_1^*=0.
\]
Irreducibility of $\sigma_-$ implies $H_2\cdot\overline n=0$ and then
$u\cdot H_1^*=0$.  Thus, for scalar forms $P_1,P_2$ of degrees one and
two,
\[
 H=n(P_3+xP_2)+u\,x^2(P_1+x).
\]
Put $a=\|u\|^2>0$.  The coefficients of $x$ and $x^0$ are
\begin{align}
 P_2P_3^*+a\sigma_-^2P_1^*&=\alpha\chi^*Q,
 \label{eq:B-case1-x}\\
 P_3P_3^*+\sigma_-P_2P_2^*
 +a\sigma_-^2P_1P_1^*+a\sigma_-^3
 &=(\chi\chi^*+\alpha^2\sigma_-)Q.
 \label{eq:B-case1-zero}
\end{align}
Multiply \eqref{eq:B-case1-x} by $\chi$ and compare it with
$\alpha$ times \eqref{eq:B-case1-zero} modulo $\sigma_-$.  This gives
$\sigma_-\mid(\chi P_2-\alpha P_3)P_3^*$, whence
\begin{equation}\label{eq:B-case1-pure}
 \chi P_2=\alpha P_3.
\end{equation}
If $\alpha>0$, substituting $P_3=\chi P_2/\alpha$ into
\eqref{eq:B-case1-x} shows $\chi\mid P_1$.  Write $P_1=\kappa\chi$.
Then
\[
 Q=\alpha^{-2}P_2P_2^*+(a\overline\kappa/\alpha)\sigma_-^2.
\]
Hermitian symmetry gives $\kappa\in\mathbb R$.
After cancellation, \eqref{eq:B-case1-zero} becomes
\[
 (\kappa^2-\kappa/\alpha)\chi\chi^*
 +(1-\alpha\kappa)\sigma_-=0.
\]
The two displayed bilinear forms are independent, so $\kappa=1/\alpha$.
Consequently
\[
 H=\frac{\ell}{\alpha}(nP_2+ux^2).
\]

If $\alpha=0$, \eqref{eq:B-case1-pure} and \eqref{eq:B-case1-x} give
$P_2=P_1=0$.  At $w=(0,1)$, \eqref{eq:B-case1-zero} becomes
$P_3(z)\overline{P_3(0,1)}=az_1^3$.  Hence $P_3=cz_1^3$ with
$|c|^2=a$, and
\[
 Q=a(z_0^2w_0^2-3z_0z_1w_0w_1+3z_1^2w_1^2).
\]
The coefficient matrix of $\sigma_-^3Q$, in the degree-five monomial
basis, is diagonal with entries
$a(1,-6,15,-19,12,-3)$.  Its rank is six, contradicting
\eqref{eq:B-factor-hypotheses}.  This completes Case 1.

\emph{Case 2: $u=0$ and $H_2\ne0$.}
The coefficient of $x^2$ gives $H_2\cdot H_0^*=0$.  The nonzero
coefficient spans of these two vector polynomials are therefore
orthogonal lines in $\CC^2$.  Choose orthonormal vectors $n,v$ and write
\[
 H_0=nP_3,\qquad H_2=vP_1,\qquad H_1=nA_2+vB_2,
\]
where $P_3,P_1$ are nonzero and have degrees three and one, and $A_2,B_2$
are quadratic.  The coefficients of $x$ and $x^0$ are
\begin{align}
 A_2P_3^*+\sigma_-P_1B_2^*&=\alpha\chi^*Q,
 \label{eq:B-case2-x}\\
 P_3P_3^*+\sigma_-(A_2A_2^*+B_2B_2^*)+\sigma_-^2P_1P_1^*
 &=(\chi\chi^*+\alpha^2\sigma_-)Q.
 \label{eq:B-case2-zero}
\end{align}
Reduction modulo $\sigma_-$, as in Case 1, gives
\begin{equation}\label{eq:B-case2-pure}
 \chi A_2=\alpha P_3.
\end{equation}
For $\alpha=0$, this implies $A_2=0$, and
\eqref{eq:B-case2-x} gives $B_2=0$.  Evaluate
\eqref{eq:B-case2-zero} at $z=w=(0,1)$.  Since $\sigma_-=-1$ there,
\[
 |P_3(0,1)|^2+|P_1(0,1)|^2=0.
\]
Thus $\chi\mid P_3,P_1$, and $\ell=\chi$ divides $H$.

For $\alpha>0$, write $P_3=\chi R_2$ and $A_2=\alpha R_2$.
Equation \eqref{eq:B-case2-x} shows that $\chi\mid B_2$; put
$B_2=\chi R_1$.  It follows that
\[
 Q=R_2R_2^*+\alpha^{-1}\sigma_-P_1R_1^*.
\]
Hermitian symmetry implies $P_1R_1^*=R_1P_1^*$, so
$R_1=\kappa P_1$ for a real constant $\kappa$ (including zero).
Substitution into \eqref{eq:B-case2-zero} gives
\[
 (\kappa^2-\kappa/\alpha)\chi\chi^*
 +(1-\alpha\kappa)\sigma_-=0.
\]
Hence $\kappa=1/\alpha$, and
\[
 H=\ell\left(nR_2+\frac{x}{\alpha}vP_1\right).
\]

\emph{Case 3: $u=H_2=0$.}
Now $H=H_0+xH_1$, and the two coefficient identities are
\begin{align}
 H_1\cdot H_0^*&=\alpha\chi^*Q,
 \label{eq:B-case3-x}\\
 H_0\cdot H_0^*+\sigma_-H_1\cdot H_1^*
 &=(\chi\chi^*+\alpha^2\sigma_-)Q.
 \label{eq:B-case3-zero}
\end{align}
Suppose first that $\alpha=0$.  If $H_1=0$, evaluation of
\eqref{eq:B-case3-zero} at $z=w=(0,1)$ gives $H_0(0,1)=0$ and thus
$\chi\mid H$.  Otherwise \eqref{eq:B-case3-x} gives orthogonal
coefficient lines, so $H_0=nP_3$, $H_1=vP_2$ for orthonormal $n,v$.
At $w=(0,1)$, \eqref{eq:B-case3-zero} reads
\[
 P_3(z)\overline{P_3(0,1)}
 -z_1P_2(z)\overline{P_2(0,1)}=0.
\]
At $z=(0,1)$ it also gives
$|P_3(0,1)|=|P_2(0,1)|$.  If these numbers vanish, $\chi$ divides both
polynomials.  Otherwise $P_3=\lambda z_1P_2$, $|\lambda|=1$, and
\eqref{eq:B-case3-zero} gives $Q=P_2P_2^*$, contrary to $r(Q)\ge2$.

It remains to take $0<\alpha<1$.  If $H_0(0,1)\ne0$, evaluation of
\eqref{eq:B-case3-x} at $w=(0,1)$ forces the coefficient span of $H_1$
into a line.  Its left side then has separated rank at most one, whereas
$r(\chi^*Q)=r(Q)\ge2$.  Thus $H_0(0,1)=0$ and
$H_0=\chi K_2$ for a vector quadratic $K_2$.  Dividing
\eqref{eq:B-case3-x} by $\chi^*$ gives
\[
 H_1\cdot K_2^*=\alpha Q.
\]
Together with \eqref{eq:B-case3-zero}, this implies
$\sigma_-\mid Q-K_2\cdot K_2^*$.  Define the Hermitian $(1,1)$-kernel
$T=(Q-K_2\cdot K_2^*)/\sigma_-$ and the vector quadratic
$D=H_1-\alpha K_2$.  Direct substitution gives
\begin{equation}\label{eq:B-final-square}
 D\cdot D^*=(\chi\chi^*-\alpha^2\sigma_-)T,\qquad
 D\cdot K_2^*=\alpha\sigma_-T.
\end{equation}
The kernel
\[
 \chi\chi^*-\alpha^2\sigma_-
 =(1-\alpha^2)z_0w_0+\alpha^2z_1w_1
\]
is positive definite.  Its diagonal is positive for every $z\ne0$, so
the first identity in \eqref{eq:B-final-square} makes $T$ positive
semidefinite.  Since $T$ has bidegree $(1,1)$, this is also positivity of
its coefficient matrix.

If $r(T)=2$, the product on the right of the first identity has a positive
definite coefficient matrix on the three-dimensional space of binary
quadratics: products of bases of the two linear-form spaces span that
space.  The left side is a sum of only two separated squares and has rank
at most two, a contradiction.  If $r(T)=1$, write $T=tt^*$ with a nonzero
linear form $t$.  At the zero of $t$, the first identity in
\eqref{eq:B-final-square} gives $D=0$.  Hence $D=tL$, where $L$ is a
vector linear form and
\[
 L\cdot L^*=\chi\chi^*-\alpha^2\sigma_-.
\]
Thus $L:\CC^2\to\CC^2$ is invertible.  Cancelling $t(z)$ in the second
identity of \eqref{eq:B-final-square} gives
\[
 L(z)\cdot K_2^*(w)=\alpha\sigma_-(z,w)t^*(w).
\]
Comparison of the $z$-coefficients, using invertibility of $L$, shows
$t\mid K_2$.  Then $t\mid H_1$ and
$Q=\alpha^{-1}H_1\cdot K_2^*$ has a pure factor $t(z)$, contradicting
\eqref{eq:B-Q-properties}.  Therefore $T=0$ and $D=0$.
We conclude that
\[
 H=\chi K_2+x\alpha K_2=\ell K_2.
\]
The three cases exhaust \eqref{eq:B-H-expansion}, proving the lemma.
\end{proof}

\subsection{Completion of the boundary-line exclusion}\label{subsec:B-completion}

\begin{proof}[Proof of \cref{prop:B-boundary-line-exclusion}]
The tangent case was excluded in \cref{subsec:B-tangent}.  In the secant
case, \eqref{eq:B-degree-alternative} leaves only $\nu=1,3,5$.
The first two cases were excluded in \cref{subsec:B-low-degrees}.
For $\nu=5$, the normalized identity \eqref{eq:B-Q-identities}, together
with \eqref{eq:B-rank-upper} and \eqref{eq:B-rank-lower}, satisfies
\cref{lem:B-cubic-factorization}.  Hence $\ell\mid H$.
Equation \eqref{eq:B-factor-normal} then shows that every coordinate of
$F$ is divisible by the nonconstant linear form $\ell$, contrary to
reducedness.  Thus the secant case is impossible as well.

Every projective line meeting the sphere is either tangent or secant.
Therefore $C\cap\partial\B^2=\varnothing$.  The source form on the
two-plane representing $C$ has no nonzero null vector and is definite.
Since its positive index is at most one, it must be negative definite.
\end{proof}

\end{document}